\newtheorem{theo}{Theorem}[section]
\newtheorem{prop}[theo]{Proposition}
\newtheorem{lemme}[theo]{Lemma}
\newtheorem{assumption}[theo]{Assumption}
\newcommand{\argmax}[1]{\underset{#1}{\operatorname{arg}\!\operatorname{max}}\;}
\newcommand{\argmin}[1]{\underset{#1}{\operatorname{arg}\!\operatorname{min}}\;}
\newcommand{\w}{\widehat}
\newcommand{\one}{\mathds{1}}
\newcommand{\E}{\mathbb{E}}
\renewcommand{\P}{\mathbb{P}}
\newcommand{\mfp}{\mathfrak{p}}
\newcommand{\R}{\mathbb{R}}
\newcommand{\cR}{\mathcal{R}}
\newcommand{\cS}{\mathcal{S}}
\newcommand{\cY}{\mathcal{Y}}
\title{Nonparametric plug-in classifier for multiclass classification of S.D.E. paths}
\author{Christophe Denis$^{(1)}$, Charlotte Dion-Blanc$^{(2)}$,  Eddy Ella-Mintsa$^{(1)}$, Viet Chi Tran$^{(1,3)}$
}
\begin{document}

\maketitle
\begin{center}
${(1)}$ LAMA, Université Gustave Eiffel\\ ${(2)}$ Sorbonne Université, CNRS, Laboratoire de Probabilités, Statistique et Modélisation, F-75013 Paris, France \\
${(3)}$ CRM-CNRS, Université de Montréal.
\end{center}

\date{}

\begin{abstract}
We study the multiclass classification problem where the features come from a mixture of time-homogeneous diffusions.
Specifically, the classes are discriminated by their drift functions while the diffusion coefficient is common to all classes and unknown.
In this framework, we build a plug-in classifier which relies on nonparamateric estimators of the drift and diffusion functions.
We first establish the consistency of our classification procedure under mild assumptions and then provide rates of convergence under different set
of assumptions. Finally, a numerical study supports our theoretical findings.  
\end{abstract}

\vspace*{0.25cm} 

\noindent {\bf Keywords}: Supervised learning; Multiclass classification; Nonparametric estimation; Plug-in classifier; Diffusion process\\

\noindent MSC: 62G05; 62M05; 62H30\\


\section{Introduction}
\label{sec:intro}

The massive collection of functional data has found many applications in recent years for the modeling of the joint (time)-evolution of agents -- individuals, species, particles -- that are represented by some sets of features -- time-varying variables such as geographical positions, population sizes, portfolio values etc. Examples can be found in mathematical finance \citep[see \emph{e.g.}][]{el1997backward}, biology \citep[see \emph{e.g.}][]{erban2009stochastic}, or physics \citep[see \emph{e.g.}][]{domingo2020properties}. 
This gave rise to an abundant literature on statistical methods for functional data, \citep[see \emph{e.g.}][for a review]{ramsay2005fitting, reviewfunctional}. Within this context, the study of efficient supervised classification procedures that are designed to handle temporal data is a major challenge. Indeed, usual learning algorithms such as random forests, kernel methods or neural networks are not directly tailored to take into account the temporal dependency of the data. Recently, this question has drawn a lot of attention, see 
\citet{villa2008,baillo2011classification,wang2020deep,de2021depth,neuralsde} any references therein.

In the present paper, we tackle the multiclass classification problem where the features belong to a particular family of functional data, namely trajectories, whose  temporal dynamic is modelled by stochastic differential equation.
In this framework, we propose a nonparametric plug-in type procedure for such data generated by diffusion processes observed at discrete time. Hence, our work takes place in the high frequency setup.
Let us denote by $(X,Y)$ a random couple built on a probability space $\left(\Omega,\mathcal{F},\P_{\textcolor{black}{(X,Y)}}\right)$. The feature $X=(X_t)_{t\in[0,1]}$ is a real-valued diffusion process whose \textcolor{black}{drift coefficient depends} on its associated label $Y$ taking values in $\cY=\{1,\cdots,K\}$, with $K\geq 2$.
More precisely, for each $i\in\cY$, $X$ is a solution of a stochastic differential equation whose drift function, denoted by $b^*_i$, depends on the class $i$. The marginal distribution of $X$ is hence a mixture of distributions of time-homogeneous diffusion processes.
We assume that a learning sample $\mathcal{D}_N = \{((X^{i}_t)_{t\in [0,1]},Y_i), i= 1, \ldots, N\}$ is provided, composed of $N$ \emph{i.i.d.} random \textcolor{black}{couples} with distribution $\P_{(X,Y)}$. Additionally, in this paper, the diffusions $X_i$ are observed on a subdivision $\{0, 1/n, \cdots , 1\}$ of the time interval $[0,1]$, for a positive integer $n$. 
Since we deal with multiclass classification setting, the statistical goal is then to build, based on $\mathcal{D}_N$, a classifier $\w{g}$, 
such that $\w{g}(X)$ is a prediction of the associated label $Y$ of a new path $X$.
Besides, we expect that the empirical classifier mimics the optimal Bayes classifier $g^*$
characterized as
\begin{equation*}
\textcolor{black}{g^*(X) \in \argmin{g} \P_{(X,Y)}\left(g(X) \neq Y\right)}.
\end{equation*}
Specifically, we propose a classification procedure based on the plug-in principle. In particular, the construction of our empirical classifier relies on estimators of both drift and diffusion coefficients. The performance of a predictor $\w{g}$ is assessed through its excess risk $\textcolor{black}{\P(\w{g}(X)\neq Y)- \P\left(g^*(X) \neq Y\right)}$. In the finite dimensional classification setup (\emph{e.g.} $X \in \mathbb{R}^d$), rates of convergence for plug-in rules are usually obtained under the strong density assumption ($X$ admits a density which is lower bounded) as in~\citet{audibert2007fast, Gadat_Klein_Marteau16}. 
However, theoretical properties of plug-in rules in supervised classification of trajectories
are much less studied.

\paragraph*{Related works.}
Up to our knowledge, the work of~\cite{cadre2013supervised} is the first one that tackles the problem of supervised classification in the stochastic differential equation framework. More precisely, the authors consider the model where $X=(X_{t})_{t\in[0,1]}$ is a mixture of two diffusion processes and provide a classifier based on the empirical risk minimization strategy for which
they establish rates of convergence. However, the proposed method is not implementable since it involves the minimization of a non-convex criterion.
More recently, ~\cite{gadat2020optimal}, and~\cite{denis2020classif} study plug-in classifiers for classification of diffusion paths.
In~\cite{gadat2020optimal} the authors propose a plug-in rule
for the binary classification problem where the trajectories are generated by Gaussian processes, solutions of the white noise model.
In this model, the drift function depends on time and on the label $Y$, also, the diffusion coefficient is supposed to be constant and known.
Within this framework, \cite{gadat2020optimal} establish the optimality of their classification procedure which reaches the minimax rate of convergence of order $N^{-s/(2s+1)}$, where the drift function is assumed to belong to a Sobolev space of regularity $s\geq 1$. Under an additional margin type assumption, they also derive faster rates of convergence. 
Closest to our framework, \cite{denis2020classif} also consider the challenging multiclass problem where the drift functions are space-dependent. However, the authors consider drift functions modeled under parametric assumptions, keeping the diffusion coefficient known and constant.
They propose a plug-in classifier for which only consistency is established.

In the present work, we consider a plug-in classifier that relies on nonparametric estimators of the drift and diffusion coefficients. 
The literature on this topic is extensive. Usually, the construction of estimators of drift and diffusion functions relies on the observation
of a single path. For instance, \cite{hoffmann1999lp} stud\textcolor{black}{ies} minimax rate of convergence for the estimation of the diffusion coefficient on a compact interval. For the inference of the drift coefficient, the main references using penalized contrasts can be found for long time observation with high frequency data in~\cite{hoffmann1999adaptive,comte2007penalized, comte2021drift}.
However, since we deal with the multiclass classification framework, the construction of estimators of both drift and diffusion coefficients is based on the learning sample $\mathcal{D}_N$ which is composed of repeated observations of the process on the fixed time-interval $[0,1]$. Recently,~\cite{comte2020nonparametric,marie2021nadaraya, della2022nonparametric} consider nonparametric procedures for the estimation of the drift function for continuous observations in the context of \emph{i.i.d.} observations when the horizon time is fixed. Furthermore, towards high-frequency data,~\cite{denis2020ridge} study minimum contrast estimator under a $l_2$ constraint.

\paragraph*{Main contributions. }

In this paper, we extend the results of~\cite{denis2020classif} and~\cite{gadat2020optimal} in several directions. In particular, one of the major contribution is to provide, up to our knowledge, the first study of rates of convergence for plug-in classifier in the mixture model of time-homogeneous diffusion. Importantly, we highlight that extending the results of~\cite{gadat2020optimal} to diffusion models in which the drift functions are space-dependent and the diffusion coefficient is either unknown or non-constant add many difficulties.
Besides, contrary to~\cite{denis2020classif}, we consider the nonparametric mixture model where both drift {\it and} diffusion functions are unknown as well as the weights of the mixture.
Specifically, we build a plug-in classifier that relies on the Girsanov's theorem and involves nonparametric estimators of the drift functions $b_i^*, i \in \cY$, and the diffusion coefficient.
The construction of our estimators is inspired of the ridge estimators provided in~\cite{denis2020ridge}, and consists in the minimization of a least-squares type contrast over a finite dimensional subspace under a $l_2$-constraint. The considered space of approximation is then spanned by the $B$-spline basis~\cite{deboor78}.

One of the main difficulty of the study of statistical properties of the plug-in classifiers in our context is that
it requires deriving rates of convergence for the drift and diffusion coefficients on a non-compact interval. It hence implies that the strong density assumption does not hold, although, we consider assumptions that ensure existence of transition density. Notably, our results embed generalization of the results provided in~\cite{denis2020ridge} for the estimation of non-compactly supported drift functions for $B$-spline based estimators, but also exhibit the first result for the estimation of the diffusion coefficient in the \emph{i.i.d.} framework. A salient point of our theoretical findings is obtained when the diffusion coefficient is constant and known. In this case, by leveraging the results of~\cite{comte2020nonparametric}, we show that 
optimal rates for drift estimation can only be achieved on intervals included in $[-C\sqrt{\log(N)}, C\sqrt{\log(N)}]$, with $C > 0$.

To sum up our results, a first part is dedicated to the consistency of our plug-in classifier which is obtained under very mild assumptions. In a second part, convergence rates are established in three particular cases. 
\begin{enumerate}[label=(\roman*)]
    \item When the drift functions are bounded and Lipschitz, and the diffusion coefficient is unknown and possibly non-constant,
we obtain a rate of convergence of order $N^{-1/5}$ for the plug-in classifier (up to a factor of order $\exp(\sqrt{c\log(N)}), \ c>0$). 
    \item When the diffusion coefficient is known and constant, and when the drift functions are bounded and belongs to some H\"older space with regularity $\beta$, using some arguments developed in \cite{comte2020regression} and \cite{comte2021drift} for the estimation of non-compactly supported drift functions, together with approximations of the transition density of $X$ (as they are intractable), we then prove that the plug-in classifier reaches rate of order $N^{-\beta/(2\beta+1)}$ (up to a factor of order $\exp(\sqrt{c\log(N)}), \ c>0$).
    \item When the drifts are unbounded but re-entrant and H\"older continuous with regularity $\beta$, we obtain a rate of convergence of order $N^{-3\beta/(4(2\beta+1))}$. Notice that when $\beta=1$ and $d=1$, it corresponds to the rate found in \cite{Gadat_Klein_Marteau16}. 
\end{enumerate}

\textcolor{black}{The proposed plug-in classifier is implemented in the \texttt{R}-package \texttt{SDEclassif}  available on \href{https://github.com/Eddymichelella/SDEclassif.git}{github}}.

\paragraph*{Outline of the paper. }
Section~\ref{sec:statSetting} is dedicated to presentation of the mathematical framework for the classification task. Then, the construction of the plug-in classifier is described in Section~\ref{sec:classProc} and its consistency is established in Section~\ref{subsec:consistency}.
In Sections\ref{subsec:consistency}~and~\ref{sec:ratesKnownSigma} we provide rates of convergence of our plug-in procedure under different assumptions. We perform a numerical experiment that supports our theoretical results in Section~\ref{sec:NumStudy}. Finally, We provide a discussion in Section~\ref{sec:conclusion} and the proofs of our results are postponed to Section~\ref{sec:proofs}.

\section{Statistical setting}
\label{sec:statSetting}

We consider the multiclass classification problem, where the feature $X$ comes from a mixture of Brownian diffusions with drift. More precisely, the generic data-structure
is a couple $(X,Y)$ where the label $Y$ takes its values in the set $\cY := \{1,\ldots,K\}$ with distribution denoted by ${\bf \mfp}^*=\left(\mfp_1^*,\cdots,\mfp_K^*\right)$, and where the process $X = (X_t)_{t \in [0,1]}$ is defined as the solution of the following stochastic differential equation 
\begin{equation}
    \label{eq:model}
    dX_t=b^{*}_{Y}(X_t)dt+\sigma^{*}(X_t)dW_t, \; \; X_0=0,
\end{equation} 
where $\left(W_{t}\right)_{t\geq 0}$ is a standard Brownian motion independent of $Y$. In the following, we denote by ${\bf b}^* =(b_1^*, \ldots, b_K^*)$ the vector of drift functions. 
The real-valued functions $b^*_i(.)$, $i \in \cY$, and the diffusion coefficient $\sigma^*(.)$ are assumed to be unknown. We also assume that $0 < \mfp_0^* = \min_{i \in \cY} \mfp_i^*$.


In this framework, the objective is to build a classifier $g$, \textit{i.e.} a measurable function such that the value $g(X)$ is a prediction of the associated label $Y$ of $X$. The accuracy of such classifier $g$ is then assessed through its misclassification risk, denoted by
\begin{equation*}
\mathcal{R}(g) := \mathbb{P}_{\textcolor{black}{(X,Y)}}\left(g(X) \neq Y\right).
\end{equation*}
In the following, the set of all classifiers is denoted by $\mathcal{G}$.

The main assumptions considered throughout the paper are presented in Section~\ref{subsec:ass}.
The definition and characterization of the optimal classifier {\it w.r.t.} the misclassification risk, namely the {\it Bayes classifier}, is provided in Section~\ref{subsec:BayesClassifier}

\subsection{Assumptions}
\label{subsec:ass}

The following assumptions ensure that Equation~\eqref{eq:model} admits a unique strong solution \citep[see][Theorem~2.9]{karatzas2014brownian}, and that the diffusion process $X$ admits a transition density 
$$\textcolor{black}{p_{X} : (t,x) \in ([0,1]\times \R)\mapsto p_{X}(t,x)}$$ 
\citep[see for example][]{gobet2002lan}. 
\begin{assumption}(Ellipticity and regularity)
\label{ass:RegEll}
\begin{enumerate}[label=(\roman*)]
\item There exists $L_0>0$ such that the functions $b_i^*, i = 1, \ldots, K$ and $\sigma^*$ are $L_0$-Lipschitz:
\begin{align*}
        \underset{i\in\mathcal{Y}}{\sup}{\left|b^{*}_{i}(x)-b^{*}_{i}(y)\right|}+\left|\sigma^{*}(x)-\sigma^{*}(y)\right|\leq L_0|x-y|, \ \forall (x,y)\in\mathbb{R}^{2}.
    \end{align*}
\item There exist real constants $\sigma^{*}_{0},\sigma^{*}_{1}$ such that
\begin{equation*}
0<\sigma^{*}_{0}\leq\sigma^{*}(x)\leq\sigma^{*}_{1}, \ \ \forall x\in\mathbb{R}.
\end{equation*}
\item $\sigma^{*}\in\mathcal{C}^{2}\left(\mathbb{R}\right)$ and there exist $\gamma\geq 0$ \textcolor{black}{and $c > 0$} such that : $\left|\sigma^{*\prime}(x)\right|+\left|\sigma^{*\prime\prime}(x)\right|\leq \textcolor{black}{c}\left(1+|x|^{\gamma}\right), \ \ \forall x\in\mathbb{R}$.
\end{enumerate}
\end{assumption} 
Assumption~\ref{ass:RegEll} insures that for any integer $q \geq 1$, there exists $C_q > 0$ such that
$$
\E\left[\underset{t\in[0,1]}{\sup}{\left|X_t\right|^q}\right] \leq C_q.
$$
We also assume that the following Novikov's criterion is fulfilled \cite[Prop. (1.15) p. 308]{revuzyor1999} .
\begin{assumption}(Novikov's condition)
\label{ass:Novikov}
For all $i \in \mathcal{Y}$, we have
\begin{align*}
    \E\left[\exp\left(\frac{1}{2}\int_{0}^{1}{\frac{b^{*2}_{i}}{\sigma^{*2}}(X_s)ds}\right)\right]<+\infty.
\end{align*}
\end{assumption}
In particular, this assumption allows to apply Girsanov's theorem that is a key ingredient to derive a characterization of the Bayes classifier in the next section.

\subsection{Bayes Classifier}
\label{subsec:BayesClassifier}

The Bayes classifier $g^*$ is  a minimizer of the misclassification risk over $\mathcal{G}$
\begin{equation*}
g^* \in \argmin{g \in \mathcal{G}} \mathcal{R}(g),
\end{equation*}
and is expressed as
\begin{equation*}
g^*(X) \in \argmax{i \in \mathcal{Y}} \pi^{*}_i(X), \;\; {\rm with} \;\; \pi_i^*(X) := \P\left(Y = i|X\right).
\end{equation*}
The following result of \cite{denis2020classif} provides a closed form of the conditional probabilities $\pi^{*}_i$, $i \in \mathcal{Y}$.
\begin{prop}\citep{denis2020classif}
\label{prop:probCond}
Under Assumptions \ref{ass:RegEll}, \ref{ass:Novikov},
for all $i \in \mathcal{Y}$, we define
\begin{equation*}
F^*_i(X):=\int_{0}^{1}{\frac{b_i^{*}}{\sigma^{*2}}(X_s)dX_s}-\frac{1}{2}\int_{0}^{1}{\frac{b^{*2}_{i}}{\sigma^{*2}}(X_s)ds}.
\end{equation*}
Under Assumptions~\ref{ass:RegEll},\ref{ass:Novikov}, for each $i \in \mathcal{Y}$, the conditional probability $\pi^{*}_i$ is given as follows:
\begin{equation*}
    \pi^*_i(X)= \textcolor{black}{\phi^*_i}\left(\mathbf{F}^*(X)\right), 
\end{equation*}
where ${\bf F}^* = \left(F_1^*, \ldots, F_k^*\right)$, and $\phi_i^* : (x_1,\cdots,x_K)\mapsto\frac{\mfp^*_i\mathrm{e}^{x_i}}{\sum_{k=1}^{K}{\mfp^*_k\mathrm{e}^{x_k}}}$ are the softmax functions.  
\end{prop}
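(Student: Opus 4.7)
The plan is to derive the formula via a Girsanov change of measure and a Bayes-type computation. Concretely, I would proceed in three steps.

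\textbf{Step 1: Introduce a common reference measure.} For each $i\in\mathcal{Y}$, let $\mathbb{P}_i$ denote the conditional law of $X$ given $Y=i$ on the canonical path space $C([0,1],\mathbb{R})$. Under $\mathbb{P}_i$, $X$ solves $dX_t=b_i^*(X_t)\,dt+\sigma^*(X_t)\,dW_t$, $X_0=0$. I would introduce the ``driftless'' reference measure $\mathbb{Q}$ under which $X$ is the solution of $dX_t=\sigma^*(X_t)\,d\widetilde W_t$, $X_0=0$, with $\widetilde W$ a $\mathbb{Q}$-Brownian motion. Under Assumption~\ref{ass:RegEll}, this SDE has a unique strong solution, so $\mathbb{Q}$ is well defined.

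\textbf{Step 2: Apply Girsanov.} Set $\theta_i^*(x):=b_i^*(x)/\sigma^*(x)$. By Assumption~\ref{ass:RegEll}(ii) the ratio is well defined and bounded in terms of $b_i^*$ and $\sigma_0^*$. Novikov's condition from Assumption~\ref{ass:Novikov} together with the ellipticity bound $\sigma^*\le\sigma_1^*$ ensures that
\[
Z_i(X):=\exp\Bigl(\int_0^1\theta_i^*(X_s)\,d\widetilde W_s-\tfrac{1}{2}\int_0^1\theta_i^{*2}(X_s)\,ds\Bigr)
\]
is a true $\mathbb{Q}$-martingale of expectation $1$. By Girsanov's theorem, $d\mathbb{P}_i/d\mathbb{Q}=Z_i(X)$. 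Substituting $d\widetilde W_s=\sigma^{*-1}(X_s)\,dX_s$, which is valid under $\mathbb{Q}$, yields
\[
\frac{d\mathbb{P}_i}{d\mathbb{Q}}(X)=\exp\Bigl(\int_0^1\tfrac{b_i^*}{\sigma^{*2}}(X_s)\,dX_s-\tfrac{1}{2}\int_0^1\tfrac{b_i^{*2}}{\sigma^{*2}}(X_s)\,ds\Bigr)=\exp\bigl(F_i^*(X)\bigr).
\]

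\textbf{Step 3: Bayes formula and softmax.} The joint law of $(X,Y)$ on $C([0,1],\mathbb{R})\times\mathcal{Y}$ decomposes as $\mathbb{P}(dX,\{Y=i\})=\mfp_i^*\,d\mathbb{P}_i(X)=\mfp_i^*\exp(F_i^*(X))\,d\mathbb{Q}(X)$. Summing in $i$ gives the $X$-marginal under $\mathbb{P}$, namely $d\mathbb{P}(dX)=\bigl(\sum_k\mfp_k^*\exp(F_k^*(X))\bigr)d\mathbb{Q}(X)$, which is strictly positive $\mathbb{Q}$-a.s. (since each $\mfp_k^*>0$), so the Radon--Nikodym quotient is well defined. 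Taking the ratio yields
\[
\pi_i^*(X)=\mathbb{P}(Y=i\mid X)=\frac{\mfp_i^*\exp(F_i^*(X))}{\sum_{k=1}^K\mfp_k^*\exp(F_k^*(X))}=\phi_i\bigl(\mathbf{F}^*(X)\bigr),
\]
which is the announced formula.

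The only delicate point is Step~2: one must be sure that $Z_i(X)$ is a genuine martingale so that Girsanov applies, which is exactly what Assumption~\ref{ass:Novikov} provides, and one must verify that the stochastic integral $\int_0^1(b_i^*/\sigma^{*2})(X_s)\,dX_s$ written in terms of $X$ (rather than of $\widetilde W$) is well defined---this follows from the ellipticity and regularity in Assumption~\ref{ass:RegEll} combined with the finite moments of $\sup_{t\le 1}|X_t|$. The rest of the argument is a standard conditional-density manipulation.
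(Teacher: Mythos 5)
Your proposal is correct in substance and matches the standard route that \cite{denis2020classif} follows: a Girsanov change of measure to a driftless reference law $\mathbb{Q}$ followed by a Bayes ratio, yielding the softmax form. One small technical remark: Assumption~\ref{ass:Novikov} is stated as finiteness of $\mathbb{E}\bigl[\exp\bigl(\tfrac12\int_0^1 (b_i^{*2}/\sigma^{*2})(X_s)\,ds\bigr)\bigr]$ under $\mathbb{P}$, which (since $\mfp_i^*>0$) gives Novikov under each $\mathbb{P}_i$, not directly under your reference measure $\mathbb{Q}$. So the cleaner invocation is in the opposite direction from the one you wrote: show that $Z_i^{-1}=\exp\bigl(-\int_0^1\theta_i^*\,dW^i_s-\tfrac12\int_0^1\theta_i^{*2}\,ds\bigr)$ is a true $\mathbb{P}_i$-martingale (this is exactly what the stated Novikov condition controls), deduce $d\mathbb{Q}/d\mathbb{P}_i=Z_i^{-1}$, and then invert to obtain $d\mathbb{P}_i/d\mathbb{Q}=Z_i=\exp(F_i^*(X))$. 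After that, your Step~3 is exactly the needed computation. This is a bookkeeping point rather than a gap, and the final formula is unaffected.
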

The above proposition provides an explicit dependency of the Bayes classifier on the unknown parameters $\mathbf{b}^*$,
$\sigma^*$, and $\mathbf{\mfp}^*$. Hence, it  naturally suggests to build {\it plug-in} type estimators $\w{g}$ of the Bayes classifier $g^*$, relying on estimators of the unknown parameters. In this way, we aim at building an empirical classifier whose misclassification risk is closed to the minimum risk which is reached by the Bayes classifier.
The following section is devoted to the presentation of the classification procedure.

\section{Classification procedure: a plug-in approach}
\label{sec:classProc}

Let $n \geq 1$ be an integer, and $\Delta_n = 1/n$ the time step which defines the regular grid of the observation time interval $[0,1]$. Let us assume now that an observation is a couple $(\bar{X},Y)$, with $\bar{X} := (X_{k \Delta_n})_{0\leq k \leq n}$ a high frequency sample path coming from $(X_t)_{t \in [0,1]}$ a solution of Equation~\eqref{eq:model}, and $Y$ its associated label. We also introduce, for $N\geq 1$, a learning dataset $\mathcal{D}_N = \{(\bar{X}^j, Y_j),\ j\in \{1,\dots N\}\}$ which \textcolor{black}{consists} of $N$ independent copies of $(\bar{X},Y)$.
The asymptotic framework is such that $N$ and $n$ tend to infinity.

Based on $\mathcal{D}_N$ we build a classification procedure that relies on the result of Proposition~\ref{prop:probCond}. Our classifier uses the knowledge of the class $Y_j$ for the path $X^j$, placing our work in the frame of supervised learning.
The procedure is formally described in Section~\ref{subsec:proc_def} and Section~\ref{subsec:estimators} while its statistical properties are provided in Section~\ref{subsec:consistency}. 


\subsection{Classifier and excess risk}
\label{subsec:proc_def}

As suggested by Proposition~\ref{prop:probCond}, 
based on $\mathcal{D}_N$, we first build estimators $\boldsymbol{{\w{b}}} = (\w{b}_1, \ldots, \w{b}_K)$, and $\textcolor{black}{\w{\sigma}^{2}}$ of ${\bf b}^*$ and $\textcolor{black}{\sigma^{*2}}$ respectively. Besides, we consider the empirical estimators of $\mfp_i^*$, $i = 1, \ldots, K$:
\begin{equation}\label{def:hatpi}
\w{\mfp}_i = \dfrac{1}{N} \sum_{j=1}^N \one_{\{Y_j=i\}}.
\end{equation} 
Then, in a second step, we introduce the discretized estimator of ${\bf F}^*$ 
\begin{equation}\label{def:Fhat}
\boldsymbol{\w{F}} = (\w{F}_1, \ldots, \w{F}_K), \;\; {\rm with} \;\; \w{F}_{i}(X)=\sum_{k=0}^{n-1}{\left(\frac{\w{b}_{i}}{\w{\sigma}^{2}}(X_{k\Delta})\left(X_{(k+1)\Delta}-X_{k\Delta}\right)-\frac{\Delta}{2}\frac{\w{b}^{2}_{i}}{\w{\sigma}^{2}}(X_{k\Delta})\right)}.
\end{equation}
Finally,  considering the functions $\w{\phi}_i  : (x_1,\cdots,x_K)\mapsto\frac{\w{\mfp}_i\mathrm{e}^{x_i}}{\sum_{k=1}^{K}{\w{\mfp}_k\mathrm{e}^{x_k}}}$, we naturally define the resulting plug-in classifier
$\w{g}$ as 
\begin{equation}\label{def:hatg}
\w{g}(X) \in \argmax{i \in \cY} \w{\pi}_i(X), \;\; {\rm with} \;\; \w{\pi}_i(X) = \w{\phi}_i(\boldsymbol{\w{F}}(X)).
\end{equation}
Hereafter, we establish that the consistency of the plug-in classifier $\w{g}$ can be obtained through an empirical distance between estimators $\boldsymbol{\w{b}}$, and $\textcolor{black}{\w{\sigma}^{2}}$ and the true functions ${\bf b}^*$, and $\textcolor{black}{\sigma^{*2}}$ respectively. This distance relies on the  empirical norm defined for  $h : \mathbb{R} \rightarrow \mathbb{R}$ as
\begin{equation*}\label{eq:normei}
\|h\|_{n,i}^2 := \E_{X|Y=i}\left[\dfrac{1}{n} \sum_{k=0}^{n-1} h^2(X_{k\Delta})\right].
\end{equation*}
We also introduce the general empirical norm $\|.\|_{n}$ which, for any function $h$, is 
\begin{equation*}\label{eq:normn}
\|h\|_{n}^2 := \E_{X}\left[\dfrac{1}{n} \sum_{k=0}^{n-1} h^2(X_{k\Delta})\right].
\end{equation*}
\textcolor{black}{Let us begin with a result which provides a closed formula of the excess risk in multiclass classification.
\begin{prop}
\label{prop:excessRiskClass}
Let $g$ be a classifier. The following holds
\begin{equation*}
\mathcal{R}(g) - \mathcal{R}(g^*) =
\E\left[\sum_{i=1}^K\sum_{j\neq i}
\left|\pi_i^*(X)-\pi_j^*(X)\right|\one_{\{g(X) = j, g^*(X) = i\}}\right].
\end{equation*}
\end{prop}
The proof of this result is omitted and can be found for instance in~\cite{denis2020classif}. From the result of Proposition~\ref{prop:excessRiskClass}, and upper-bounding the indicator function by $1$, we take advantage of the Lipschitz property of the softmax functions $(\phi^{*}_{i})_{i=1,\ldots, K}$ that define the probabilities $(\pi_i^{*}(X))_{i=1, \ldots, K}$ to bound the excess risk of an empirical classifier $\w{g}$ based on $\w{\mathbf{b}} = \left(\w{b}_1, \ldots, \w{b}_K\right)$ and $\w{\sigma}^{2}$ by the respective risks of estimation of estimators $\w{b}_i$ and $\w{\sigma}^{2}$.
}
Let us now announce the main result on the excess risk of a plug-in type classifier. 

\begin{theo}
\label{thm:comparisonInequality}
Assume $N$ and $n$ fixed (and large). Grant Assumptions \ref{ass:RegEll}, \ref{ass:Novikov}.
Assume that there exists ${b}_{\rm max}, {\sigma}^2_0 > 0$ such that for all $x \in \mathbb{R}$
\begin{equation}\label{hyp:borne-bchapeau}
\max_{i \in \mathcal{Y}} |\w{b}_i(x)| \leq {b}_{\rm max} \;\; {\rm and} \;\; \w{\sigma}^{2}(x) \geq {\sigma}^2_0. 
\end{equation}
Then the classifier $\w{g}$ defined in Equation \eqref{def:hatg} satisfies
\begin{equation*}
\E\left[\mathcal{R}(\w{g}) - \mathcal{R}(g^*)\right] \leq C \left(\sqrt{\Delta_n} + \frac{1}{\mfp_0^* \sqrt{N}} 
+ \E\left[{b}_{\rm max}{\sigma}^{-2}_0  \sum_{i =1}^K \|\w{b}_i -b_i^*\|_n\right] + \E\left[{\sigma}^{-2}_0 \|\w{\sigma}^2-\sigma^{*2}\|_n\right]\right),
\end{equation*}
where $C >0$ is a constant which depends on $b^*$,  $\sigma^*$, and $K$.
\end{theo}

Theorem~\ref{thm:comparisonInequality}
highlights that the excess risk of the plug-in classifier depends on the discretization error which is of order $\Delta_n^{-1/2}$, the $L_2$ error of $\w{\mfp}$ which is of order $N^{-1/2}$, and the estimation error of $\w{\bf b}$ and $\w{\sigma}^2$ assessed through the empirical norm $\|.\|_{n}$.
Therefore, a straightforward consequence of Theorem~\ref{thm:comparisonInequality}
is that consistent estimators of ${\bf b}^*$, and $\sigma^{*2}$ yield the consistency of plug-in classifier $\w{g}$.
Notice that the additional assumption \eqref{hyp:borne-bchapeau} does not require that the true functions $b_i^*$'s are bounded, only their estimators should be. For the difference between $b_i^*$ and $\widehat{b}_i$ to remain controlled in the norm $\|\cdot \|_n$, it is necessary that the process $X$ rests with high probability in a compact region of $\R$. 
The next section is devoted to the construction of consistent estimators of both 
drift and diffusion \textcolor{black}{coefficients}.

\subsection{Estimators of drift and diffusion coefficients}
\label{subsec:estimators}

In this section, we provide consistent estimators $\boldsymbol{\w{b}}$, and $\w{\sigma}^{2}$, implying the consistency of the associated plug-in classifier.
These estimators are defined as minimum contrast estimators under an $l_2$-constraint 
on a finite dimensional vector space spanned by the $B$-spline basis, but other families of nonparametric estimators could have been chosen as well. 
In particular, to ensure statistical guarantees on $\mathbb{R}$, 
the considered estimators are built on a large intervals parameterized by the number $N$ of sample paths, and that tends to the whole real line as $N$ goes to infinity.

\subsubsection{Spaces of approximation}

Let $\textcolor{black}{A, K^{*} > 0}$, and $M \geq 1$.
Let \textcolor{black}{${\bf u} = (u_{-M}, \ldots, u_{K^{*}+M})$}, a sequence of knots of the compact interval \textcolor{black}{$[-A, A]$} such that 
\begin{align*}
    &u_{-M}=\cdots=u_{-1}=u_0= \textcolor{black}{-A}, \ \ \mathrm{and} \ \ \textcolor{black}{u_{K^{*}}=u_{K^{*}+1}=\cdots = u_{K^{*}+M} = A}. \\ 
    &\textcolor{black}{\forall \ell\in[\![0,K^{*}]\!], \ \ u_{\ell}= -A + \dfrac{2 \ell A}{K^{*}}.}
\end{align*}
Let us consider the $B$-spline basis $\left(B_{-M}, \ldots, B_{\textcolor{black}{K^{*}}+M}\right)$ of order $M$ defined by the knots sequence ${\bf u}$.
For the construction of the $B$-spline and its properties, we refer for instance to~\citep{gyorfi2006distribution}.
Let us mention that the considered $B$-spline functions are \textcolor{black}{nonnegative and} $M$-1 continuously differentiable on $\textcolor{black}{[-A,A]}$ and are zero outside \textcolor{black}{$[-A, A]$}.
Besides, for all \textcolor{black}{$x \in [-A,A]$}, we have that $\sum_{\ell=-M}^{\textcolor{black}{K^{*}-1}} B_{\ell}(x) = 1$.
Now, we introduce the space of approximation \textcolor{black}{$\mathcal{S}_{K^{*},M}$} defined as
\begin{equation}\label{eq:SKNM}
\textcolor{black}{\cS_{K^{*},M}: = \left\{\sum_{\ell=-M}^{K^{*} - 1} a_{\ell} B_{\ell}, \;\; \|{\bf a}\|^{2}_2 \leq (K^{*}+M)A^2\log(N) \right\}},
\end{equation} 
where \textcolor{black}{$\|{\bf a}\|^{2}_2=\sum_{\ell=-M}^{K^{*}-1} a_\ell^2$} is the usual $\ell_2$-norm. \textcolor{black}{Note that $A$ can depend on the size $N$ of the learning sample and tend to infinity as $N \rightarrow \infty$.}
The introduction of the constraint space \textcolor{black}{$\mathcal{S}_{K^{*},M}$} is motivated by two facts. The first one is the following important property of spline approximations, inspired by the related properties for the H\"older functions (see \cite{gyorfi2006distribution}):
\begin{prop}
\label{prop:approx}
Let $h$ be a $L$-lipschitz function. Then there exists \textcolor{black}{$\tilde{h} \in \mathcal{S}_{K^{*},M}$}, such that
\begin{equation*}
\textcolor{black}{|\tilde{h}(x)-h(x)| \leq C \frac{A}{K^{*}}, \;\; \forall x \in [-A,A]},
\end{equation*}
where $C >0$ depends on $L$, and $M$.
\end{prop}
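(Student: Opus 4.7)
The plan is to construct $\tilde h$ by applying a standard B-spline quasi-interpolation operator to $h$, control the $L^{\infty}$ error via classical spline approximation theory, and then check that the resulting coefficient vector satisfies the $\ell_2$ constraint that defines $\mathcal{S}_{K_N,M}$.

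More precisely, I would use a Schoenberg-type (or de Boor--Fix) quasi-interpolant $Q_{K_N,M}$ onto the spline space spanned by $(B_{-M},\dots,B_{K_N+M})$. This is a bounded linear operator of the form
\begin{equation*}
Q_{K_N,M}h = \sum_{\ell=-M}^{K_N+M} \lambda_\ell(h)\,B_\ell,
\end{equation*}
where each $\lambda_\ell$ is a linear functional evaluating $h$ on a small neighbourhood $I_\ell$ of the support of $B_\ell$, with $|\lambda_\ell(h)|\leq C_M \sup_{x\in I_\ell}|h(x)|$ for a constant $C_M$ depending only on $M$. Taking $\tilde h := Q_{K_N,M}h$, the classical local approximation theorem for splines (see the B-spline chapter in \cite{gyorfi2006distribution}, or de Boor's book) yields for any $L$-Lipschitz $h$ and any $x\in(-\log N,\log N)$
\begin{equation*}
|\tilde h(x)-h(x)| \;\leq\; C_M \,L\, \max_\ell |u_{\ell+1}-u_\ell| \;=\; C_M\,L\,\frac{2\log N}{K_N}.
\end{equation*}
This is exactly the desired rate $C\log(N)/K_N$.

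The second step is to verify that $\tilde h \in \mathcal{S}_{K_N,M}$, i.e.\ that the coefficient vector ${\bf a}=(\lambda_\ell(h))_\ell$ satisfies $\|{\bf a}\|_2 \leq (K_N+M)\log^3(N)$. Since $h$ is $L$-Lipschitz, on $[-\log N,\log N]$ we have $|h(x)|\leq |h(0)|+L\log N$, so $|\lambda_\ell(h)|\leq C_M(|h(0)|+L\log N)$ for every $\ell$. With $K_N+2M+1$ coefficients this gives
\begin{equation*}
\|{\bf a}\|_2^2 \;\leq\; (K_N+2M+1)\,C_M^2\,(|h(0)|+L\log N)^2 \;=\; O\!\bigl(K_N\log^2 N\bigr),
\end{equation*}
which is much smaller than $(K_N+M)^2\log^6(N)$ for $N$ large enough; so the $\ell_2$ constraint is satisfied automatically (and for small $N$ one can adjust the constant $C$ absorbed in the statement).

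The only genuinely non-routine ingredient is the local approximation inequality for quasi-interpolants on non-uniform knot sequences with multiple knots at the endpoints, but this is textbook material; nothing in the argument depends on the stochastic setting of the paper. The constant $C$ in the statement depends on $L$ and $M$ through $C_M$ and the endpoint-knot multiplicity structure of ${\bf u}$.
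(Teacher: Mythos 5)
Your proposal is correct and takes essentially the same route as the paper: the paper's $\tilde h$ is itself a Schoenberg-type quasi-interpolant (point evaluations of $h$ at the knots, $\tilde h=\sum_\ell h(u_\ell)B_\ell$), the error bound comes from partition of unity plus local support of the $B$-splines plus the Lipschitz bound on $h$, and the $\ell_2$ constraint is checked exactly as you do, via $|h(x)|\lesssim\log N$ on $(-\log N,\log N)$. The only cosmetic difference is that you invoke the textbook quasi-interpolation framework in general rather than writing out the explicit coefficients, and your handling of the $|h(0)|$ term in the constraint check is slightly more careful than the paper's.
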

The second one is that the set of functions \textcolor{black}{$\mathcal{S}_{K^{*},M}$} is a {\it totally bounded class}, in the following sense \cite[Chapter 28]{devroye2013probabilistic}. 
According to~\cite{denis2020ridge}, \textcolor{black}{for each $\varepsilon \in (0,1)$ and for $N$ large enough}, there exists an $\varepsilon$-net $\tilde{\mathcal{S}}_{\varepsilon}$ of \textcolor{black}{$\mathcal{S}_{K^{*},M}$} {\it w.r.t.} to the supremum norm $\|.\|_{\infty}$  such that
\begin{equation*}
\textcolor{black}{\log\left({\rm card}(\tilde{\mathcal{S}}_{\varepsilon})\right) \leq C_MK^{*}
\log\left(\frac{K^{*}}{\varepsilon}\right).}
\end{equation*}
It shows that the complexity of \textcolor{black}{$\mathcal{S}_{K^{*},M}$} given in Equation \eqref{eq:SKNM} is parametric which is particularly appealing in order to apply concentration inequalities.

\subsubsection{Minimum contrast estimators}

In this section, we propose two estimators of $\boldsymbol{b}^*$, and $\sigma^{*2}$ which lead to a plug-in classifier that exhibits appealing properties. 
The construction of the estimators $\boldsymbol{\w{b}}$, and $\w{\sigma}^{2}$ relies on the minimization of a least squares contrast function over the space \textcolor{black}{$\mathcal{S}_{K^{*},M}$}. They are both based on the observed increments of the process $X$.

\paragraph{Estimator of the drift functions. }
Let $i \in \mathcal{Y}$ and $N_i:= \sum_{j=1}^N \one_{\{Y_j = i\}}$ a random variable of Binomial distribution with parameters $(N,\mfp^*_i)$. We define the random set $\mathcal{I}_{i} := \{j, \; Y_j= i\} \textcolor{black}{= \{i_1, \ldots, i_{N_i}\}}$ and consider the dataset $\left\{\bar{X}^j, \; j \in \mathcal{I}_i\right\}$ \textcolor{black}{of size $N_i$} composed of the observations 
of the class $i$. \textcolor{black}{Herealter, we work conditional on $(\one_{\{Y_1=i\}}, \ldots, \one_{\{Y_N=i\}})$, on the event $\{N_i > 1\}$. Hence, $N_i$ is viewed as a deterministic variable such that $N_i > 1$ . In this context, we set for all $i \in \mathcal{Y}, ~ A = A_{N_i} > 0$ and $K^{*} = K_{N_i} > 0$ where $(A_{N_i})$ and $(K_{N_i})$ are increasing sequences of $N_i$}.
The first estimator $\widetilde{b}_i$ of $b_i^*$ is defined as
\begin{equation}
\label{eq:eqEstimator}
\tilde{b}_i \in \argmin{h \in \mathcal{S}_{\textcolor{black}{K_{N_i}},M}} \dfrac{1}{nN_i}\sum_{j \in \mathcal{I}_{i}}\sum_{k=0}^{n-1} \left(Z_{k\Delta_n}^j-h(\textcolor{black}{X}^j_{k\Delta_n})\right)^2\one_{N_i>0}, \;\; {\rm with} \;\; Z_{k\Delta_n}^j := \dfrac{(\textcolor{black}{X}^j_{(k+1)\Delta_n}-\textcolor{black}{X}^j_{k\Delta_n})}{\Delta_n}.
\end{equation} 
Then, to fit the assumption of Theorem~\ref{thm:comparisonInequality}, rather than $\tilde{b}_i$, we consider its thresholded counterpart 
\begin{equation}\label{eq:boundedbi}
\w{b}_i(x) := \tilde{b}_i(x) \one_{\{|\tilde{b}_i(x)|\leq \textcolor{black}{A_{N_i}\log^{1/2}(N)}\}} + {\rm sgn}(\tilde{b}_i(x))\textcolor{black}{A_{N_i}\log^{1/2}(N)}  \one_{\{|\tilde{b}_i(x)| > \textcolor{black}{A_{N_i}\log^{1/2}(N)}\}}.
\end{equation}
Note that the value of the threshold $\textcolor{black}{A_{N_i}\log^{1/2}(N)}$ corresponds to the bound $b_{\max}$ in \eqref{hyp:borne-bchapeau}. Although this bound depends on $N$, Theorem \ref{thm:comparisonInequality} can be applied, but to ensure the consistency of the classifier, we now have to prove that the estimation rate for $\widehat{b}_i$ decreases sufficiently fast.

\paragraph{Estimator of the diffusion coefficient.}

The construction of the estimator of \textcolor{black}{$\sigma^{*2}$} follows the same lines. However, since the diffusion coefficient is the same for all classes, we can use the whole dataset $\mathcal{D}_N$ to build its estimator \textcolor{black}{with $A = \tilde{A}_N, ~ K^{*} = \tilde{K}_N$ and $(\tilde{A}_N), ~ (\tilde{K}_N)$ are increasing sequences of $N$}. More precisely,
we define 
\begin{equation}\label{eq:least squares contrast - sigma}
\tilde{\sigma}^2  \in \argmin{h\in \mathcal{S}_{\textcolor{black}{\tilde{K}_N},M}} \dfrac{1}{nN} \sum_{j=1}^N\sum_{k=0}^{n-1} \left(U^j_{k\Delta_n}-h(\textcolor{black}{X}^j_{k\Delta_n})\right)^2, \;\; {\rm with} \;\; U^j_{k\Delta_n} = \dfrac{(\textcolor{black}{X}^j_{(k+1)\Delta_n}-\textcolor{black}{X}^j_{k\Delta_n})^2}{\Delta_n}
\end{equation}
Finally, as for the drift estimator we consider the truncated version $\w{\sigma}^2$ as 
\begin{equation}\label{eq:boundedsigma}
\w{\sigma}^2(x) := \tilde{\sigma}^2(x) \one_{\{\frac{1}{\log(N)} \leq \tilde{\sigma}^2(x)\leq \textcolor{black}{\tilde{A}_N\log^{1/2}(N)}\}} + \textcolor{black}{\tilde{A}_N\log^{1/2}(N)} \one_{\{\tilde{\sigma}^2(x) > \textcolor{black}{\tilde{A}_N\log^{1/2}(N)}\}} + \frac{1}{\log(N)} \one_{\{\tilde{\sigma}^2(x) \leq \frac{1}{\log(N)} \}}.
\end{equation}
Although this constraint does not appear in Theorem~\ref{thm:comparisonInequality}, it remains natural in view of 
Assumption~\ref{ass:RegEll} (ii). We will impose that $\w{\sigma}^{2}$ is bounded by $\textcolor{black}{\tilde{A}_N\log^{1/2}(N)}$ to derive its consistency.


\subsection{A general consistency result}
\label{subsec:consistency}

In this section, we establish the consistency of the empirical classifier based on the estimators presented in the previous section.
We first provide rates of convergence for \textcolor{black}{the estimators of both the drift and the diffusion coefficients}. 
\begin{theo}
\label{thm:cveDriftSig} 
Let $i \in \mathcal{Y}$, \textcolor{black}{and set $A_{N_i} = \log^{2}(N_i)$ conditional on the event $\{N_i > 1\}$, and $\tilde{A}_N = \log^{2}(N)$}. Assume that  Assumptions \ref{ass:RegEll}, \ref{ass:Novikov} are satisfied.
Considering the estimator $\w{b}_i$ of $b^*_i$ \eqref{eq:boundedbi} and the estimator $\w{\sigma}^2$ of $\sigma^{*2}$ \eqref{eq:boundedsigma}, \textcolor{black}{set $K_{N_i} \propto (N_i\log(N_i))^{1/5}$ for $\w{b}_i$, and $\tilde{K}_{N} \propto (N\log(N))^{1/5}$ for $\w{\sigma}^2$}. \textcolor{black}{For $N$, $N_i$ then $n$ large enough, such that $\Delta_n  = O(1/N)$, we have}
\begin{equation*}
\E\left[\|\w{b}_i -b_i^*\|_{n,i}\right] \leq C_1 \left(\dfrac{\log^{4}(N)}{N}\right)^{1/5}, \;\; {\rm and} \;\; \E\left[\|\w{\sigma}^2 -\sigma^{*2}\|_{n}\right] \leq C_2 \left(\dfrac{\log^{4}(N)}{N}\right)^{1/5},
\end{equation*}
where $C_1, C_2 >0$ are constants which depend on $L_0$, $\mfp_0$, and $K$.
\end{theo} 
\textcolor{black}{Regarding the estimation of the drift functions $b^{*}_{i}$, 
the control of the integrated risk $\displaystyle \E \left[ \|\w{b}_i-b^*_i\|^2_{n,i} \right]$ is deduced from the control of the empirical risk $\displaystyle \E \left[ \|\w{b}_i-b^*_i\|^2_{n,N_i} \right]$, defined as}
\begin{equation*}
    \textcolor{black}{\E \left[ \|\w{b}_i-b^*_i\|^2_{n,N_i} \right]=\E \left[ \frac{1}{nN} \sum_{j=j_1}^{j_{N_i}}\sum_{k=0}^{n-1} (\w{b}_i-{b}^*_i)^2(X^{(j)}_{k\Delta_n})\right].}
\end{equation*}
%
%
\textcolor{black}{The link between the two risks is done using concentration arguments.
}
%

Several comments can be made about Theorem \ref{thm:cveDriftSig}. First, we obtain a general rate of convergence for the estimation on $\R$ for both drift and diffusion coefficient functions under mild assumptions. This rate is, up to a logarithmic factor, of order $N^{-1/5}$. Hence, it extends the result of Theorem 3.3 in~\cite{denis2020ridge}, where only consistency of drift estimators is obtained. In particular, a difficulty in establishing  the convergence rate on $\R$ is to control the exit probabilities from \textcolor{black}{the intervals $(-A_{N_i},A_{N_i})$ and $(-\tilde{A}_N, \tilde{A}_N)$, which are} provided here by careful estimates for the transition densities following \cite{gobet2002lan}.

This result together with Theorem~\ref{thm:comparisonInequality} yields the consistency of the plug-in classifier 
\begin{equation}\label{eq:classifierPI}
\w{g}:=\w{g}_{\w{\mfp},\w{\bf b}, \w{\sigma}^2}
\end{equation} 
where the unknown parameters are replaced by their estimators in Equation~\eqref{def:hatg}.
However, application of Theorem~\ref{thm:comparisonInequality} requires the consistency of the estimator $\w{b}_i$ in terms of empirical norm $\|.\|_n$ and not in terms of norm $\|.\|_{n,i}$. To circumvent this issue, we can use a change of probability to get rid of the conditioning on $Y=i$. For this purpose, we take advantage of Lemma~\ref{lem:controleSortiCompact} and~\ref{lem:boundDensity} to derive precise control of the transition density of the process $X$ conditioned on $Y=i$, and then to establish the consistency of the plug-in classifier. 
\begin{theo}
\label{thm:consistency}
Grant Assumptions \ref{ass:RegEll}, \ref{ass:Novikov}. \textcolor{black}{For $N$ large enough, set $\Delta_n  = O(1/N), ~ \tilde{A}_N = \log(N)$ and $\tilde{K}_N = (N\log(N))^{1/5}$. Moreover, for each $i \in \mathcal{Y}$, on the event $\{N_i > 1\},~ A_{N_i} = \log(N_i)$ and $K_{N_i} \propto (N_i\log(N_i))^{1/5}$}. \textcolor{black}{Then}, the classifier $\w{g}$
satisfies 
\begin{equation*}
\E\left[\mathcal{R}(\w{g})-\mathcal{R}(g^*)\right] \underset{N \rightarrow \infty}\longrightarrow 0.
\end{equation*}
\end{theo}  
The consistency of our classification procedure is obtained under very mild assumptions. 
The study of the rates of convergence  requires more structural assumptions.
In the following section, we obtain rates of convergence of the plug-in classifier under different kind of assumptions. 



\subsection{General rate of convergence for bounded drift function}
\label{subsec:genRate}

\textcolor{black}{In this section, we study the general rate of convergence of the proposed method described in Section~\ref{subsec:proc_def} under the additional assumption that the drift functions of the considered mixture model are bounded. Note that no additional assumption is made on the diffusion coefficient.} 

Let us consider the following assumption.
\begin{assumption}
\label{ass:boundedDrift}
There exists $C_{\bf b^*}$ such that
\begin{equation*}
\max_{i \in \mathcal{Y}}\|b_i^*\|_{\infty} \leq C_{{\bf b}^*}.    
\end{equation*}
\end{assumption}
Let $i,j \in \cY^2$ with $i \neq j$. The following property allows to upper bound the expectation conditional on $\{Y=i\}$ by the expectation conditional on $\{Y = j\}$. This happens to be the cornerstone to derive rates of convergence for our procedure.
\begin{prop}
\label{prop:eqNorm}
Under Assumptions~\ref{ass:RegEll}, ~\ref{ass:Novikov}, and~\ref{ass:boundedDrift}, we have for all $i,j \in \cY^2$ such that $i\neq j$, and $N$ large enough
\begin{equation*}
\textcolor{black}{\left\|\w{b}_i - b^{*}_{i}\right\|^{2}_{n,j} \leq C\exp\left(\sqrt{c\log(N)}\right)\left\|\w{b}_i - b^{*}_{i}\right\|^{2}_{n,i}+C\frac{A^2\log(N)}{N}},
\end{equation*}
where $C, c >0$ depend on $C_{{\bf b}^*}, \sigma_1$, and $\sigma_0$.
\end{prop}

A crucial consequence of this result is that in particular the empirical norms $\left\|.\right\|_{n,i}$, $i \in \mathcal{Y},$ are now equivalent up to a factor of order $\exp\left(\sqrt{c\log(N)}\right)$. Notice that for all $r_1, r_2>0$,
\begin{equation}
\label{eq:ordre-exp-sqrt-log}
\log^{r_1}(N)=o\big(\exp\left(\sqrt{c\log(N)}\right)\big),\quad \mbox{ and }\quad \exp\left(\sqrt{c\log(N)}\right)=o\big(N^{r_2}\big). 
\end{equation}
In particular, the factor $\exp\left(\sqrt{c\log(N)}\right)$ is negligible with respect to any power of $N$. 
Therefore, combining Theorem~\ref{thm:comparisonInequality},~\ref{thm:cveDriftSig}, and Proposition~\ref{prop:eqNorm},
we are able to give the rate of convergence for our procedure (when the drift coefficients are globally Lipschitz and bounded).
\begin{theo}
\label{thm:boundedDrift}
Grant Assumptions~\ref{ass:RegEll}, \ref{ass:Novikov}, and~\ref{ass:boundedDrift}. \textcolor{black}{Set $\tilde{A}_N = \log(N)$ and $\tilde{K}_N \propto (N\log(N))^{1/5}$. Moreover, for each class $i \in \mathcal{Y}$, on the event $\{N_i > 1\}$, $A_{N_i} = \log(N_i)$ and $K_{N_i} \propto (N_i\log(N_i))^{1/5}$. The} plug-in classifier $\w{g}$ given in Equation \eqref{eq:classifierPI}, provided that $\Delta_n = O\left(N^{-1}\right)$ and $N$ large enough, satisfies
\begin{equation*}
\E\left[\mathcal{R}(\w{g})-\mathcal{R}(g^*)\right] \leq C \exp\left(\sqrt{c\log(N)}\right)N^{-1/5},
\end{equation*}
where $C >0$ depends on $C_{{\bf b}^*}, \sigma_1$, and $\sigma_0$.
\end{theo}
Leveraging the result of Theorem~\ref{thm:cveDriftSig} and Proposition~\ref{prop:eqNorm}, we obtain a rate of convergence which is of order $N^{-1/5}$ up to the extra factor  
$\exp\left(\sqrt{c\log(N)}\right)$. Note that the optimal rate of convergence obtained when the estimation of drift function is done over on a compact set is of order $N^{-1/3}$ {\it w.r.t.}  $\|.\|_{n}$ rather than $N^{-1/5}$~\citep[see][]{denis2020ridge}.
Here, this slower rate is mainly due to the fact that our procedure
requires a control of the drift estimators over $\R$. 

In the next section, we show that when $\sigma^*$ is constant and assumed to be known, we derive faster rates of convergence. In particular, under Assumption~\ref{ass:boundedDrift}, we show that our plug-in procedure achieves a rate of convergence of order $N^{-1/3}$. 
Lastly, note that Theorem~\ref{thm:boundedDrift} can be easily extended to higher order of regularity for the drift functions ({\it e.g.} H\"older with regularity $\beta >1$). In this case, the obtained rate of convergence is of order
$N^{-\beta/(2\beta +3)}$.

\section{Classifier's rate of convergence with known diffusion coefficient}
\label{sec:ratesKnownSigma}

In this section, we consider that the diffusion coefficient is known and constant, \textcolor{black}{and we derive faster rates of convergence of the classification procedure}. For sake of simplicity, we choose $\sigma^* = 1$. In this case, our plug-in procedure only involves
the estimation of the drift function $\boldsymbol{\w{b}}$. Hence, the plug-in classifier now writes as $\w{g} = \w{g}_{\mfp,\boldsymbol{\w{b}}, 1}$.

In order to derive a general rate of convergence as a function of the drift regularity, we consider the following smoothness assumption~\citep{tsybakov2008introduction}, which is a subset of Lipschitz functions.
\begin{assumption}
\label{ass:smoothnessDrift}
For all $i \in \mathcal{Y}$, $b^*_i$ is H\"older with regularity parameter $\beta \geq 1$.
\end{assumption}

\subsection{Rates of convergence for drift estimators}

Let $i \in \cY$.
The study of the rates of convergence of the estimator $\w{b}_i$ relies on the properties of
the matrix $\Psi_{K_{N_i}} \in \R^{(K_{N_i}+M)^2}$ defined by
\begin{equation}
\label{eq:psimatrix}
\Psi_{K_{N_i}}:=\left(\frac{1}{n}\sum_{k=0}^{n-1}{\E_{X|Y=i}\left[B_{\ell}(X^{i}_{k\Delta})B_{\ell^{\prime}}(X^{i}_{k\Delta})\right]}\right)_{\ell,\ell^{\prime}\in[-M,K_{N_i}-1]}.
\end{equation}
Note that for $t \in S_{K_{N_i}, M},~ t=\sum_{i=-M}^{K_{N_i}-1} a_i B_{i,M,{\bf u}}$, we have the relation
\begin{equation*}
\|t\|_{n,i}^2 = {\bf a}^{\prime} \Psi_{K_{N_i}} {\bf a}, \ \ \mathrm{with} \ \ {\bf a}=\left(a_{-M},\cdots,a_{K_{N_i}-1}\right)^\prime.
\end{equation*}
Let us remind the reader that for a matrix \textcolor{black}{$P$},
the operator norm \textcolor{black}{$\|P\|_{\mathrm{op}}$} is defined as the square root of the largest eigenvalue of the matrix \textcolor{black}{$P^\prime P$}. Besides, if \textcolor{black}{$P$} is symmetric, its norm is equal to its largest eigenvalue.
The matrix $\Psi_{K_{N_i}}$ satisfies the following property.
%
%
\begin{lemme}
\label{lm:MinEigenValue}
Conditional on $(\one_{\{Y_1=i\}}, \ldots, \one_{\{Y_N=i\}})$, on the event $\{N_i > 1\}$,
the matrix $\Psi_{K_{N_i}}$ given in Equation~\eqref{eq:psimatrix} satisfies
\begin{enumerate}[label=(\roman*)]
\item if $K_{N_i}\geq 1$,  $\Psi_{K_{N_i}}$ is invertible,
\item
under Assumption~\ref{ass:RegEll}, for $N$ large enough, if $K_{N_i} \leq \sqrt{N_i},$ there exists two constants $C,c>0$ such that
\begin{equation*}
 c\frac{K_{N_i}}{A_{N_i}}\exp\left(\frac{A^{2}_{N_i}}{6}\right) \leq    \|\Psi_{K_{N_i}}^{-1}\|_{\mathrm{op}} \leq C\frac{K_{N_i}\log(N_i)}{A_{N_i}}\exp\left(\frac{2}{3}A^{2}_{N_i}\right).
\end{equation*}
\end{enumerate}
\end{lemme}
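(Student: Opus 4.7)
For any $t=\sum_{\ell=-M}^{K_{N_i}-1} a_\ell B_\ell \in \cS_{K_{N_i},M}$, the quadratic form of $\Psi_{K_{N_i}}$ reads
\[
{\bf a}^\prime \Psi_{K_{N_i}}{\bf a}=\|t\|_{n,i}^2=\frac{1}{n}\sum_{k=0}^{n-1}\int_{-A_{N_i}}^{A_{N_i}} t^2(x)\, p_i(k\Delta_n,0,x)\, \dd x,
\]
where $p_i(t,0,\cdot)$ denotes the transition density of $X$ conditional on $Y=i$ (which exists by Assumption~\ref{ass:RegEll}). The plan rests on two ingredients used throughout: (a) the classical B-spline norm equivalence on $[-A_{N_i},A_{N_i}]$ with uniform knots of spacing $2A_{N_i}/K_{N_i}$,
\[
c_M\frac{A_{N_i}}{K_{N_i}}\|{\bf a}\|_2^2 \,\leq\, \int_{-A_{N_i}}^{A_{N_i}} t^2(x)\,\dd x \,\leq\, C_M\frac{A_{N_i}}{K_{N_i}}\|{\bf a}\|_2^2,
\]
and (b) two-sided Aronson-type bounds on $p_i$, valid under Assumption~\ref{ass:RegEll}: there exist $c_0,c_1,C_0,C_1>0$ depending on $L_0,\sigma_0^*,\sigma_1^*$ such that
\[
\frac{c_0}{\sqrt{t}}\exp\!\left(-C_0\,\frac{x^2}{t}\right) \leq p_i(t,0,x) \leq \frac{C_1}{\sqrt{t}}\exp\!\left(-c_1\,\frac{x^2}{t}\right),\quad t\in(0,1],\ x\in\R.
\]

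\textbf{Proof of (i).} Suppose ${\bf a}^\prime\Psi_{K_{N_i}}{\bf a}=0$ for some ${\bf a}\neq 0$. Then $t(X^i_{k\Delta_n})=0$ $\P_{X|Y=i}$-almost surely for every $k\in\{0,\ldots,n-1\}$. Since the Aronson lower bound makes $X^i_{k\Delta_n}$ admit a positive density on $\R$ for $k\geq 1$, the spline $t$ vanishes on a set of positive Lebesgue measure inside $(-A_{N_i},A_{N_i})$. By the linear independence of the B-spline basis on any open subset of its support, this forces ${\bf a}=0$, proving positive definiteness and hence invertibility as soon as $K_{N_i}\geq 1$.

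\textbf{Proof of (ii), upper bound on $\|\Psi_{K_{N_i}}^{-1}\|_{\mathrm{op}}$.} Since $\|\Psi_{K_{N_i}}^{-1}\|_{\mathrm{op}}=1/\lambda_{\min}(\Psi_{K_{N_i}})$, it suffices to bound the Rayleigh quotient from below. The plan is to discard small values of $k$ (where $p_i(k\Delta_n,0,\cdot)$ is too concentrated near $0$) and average over a range $k\Delta_n\in[t_0,1]$ with $t_0$ of order $1/\log(N_i)$. On this range the Aronson lower bound gives, uniformly in $x\in[-A_{N_i},A_{N_i}]$,
\[
\frac{1}{n}\sum_{k:\,k\Delta_n\geq t_0}p_i(k\Delta_n,0,x) \,\geq\, c\exp\!\left(-\frac{2}{3}A_{N_i}^{2}\right),
\]
where the exponent $2/3$ is obtained by optimizing $\exp(-C_0 A_{N_i}^2/t)$ over the retained $t$'s and absorbing the ellipticity constants; the loss of a factor $\log(N_i)$ reflects the truncation $t_0\asymp 1/\log(N_i)$. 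Combining with the B-spline lower norm equivalence yields
\[
\|t\|_{n,i}^2\,\geq\, c'\,\frac{A_{N_i}}{K_{N_i}\log(N_i)}\exp\!\left(-\frac{2}{3}A_{N_i}^{2}\right)\|{\bf a}\|_2^2,
\]
i.e.\ $\|\Psi_{K_{N_i}}^{-1}\|_{\mathrm{op}}\leq C\,K_{N_i}\log(N_i)A_{N_i}^{-1}\exp(2A_{N_i}^2/3)$ as announced.

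\textbf{Proof of (ii), lower bound on $\|\Psi_{K_{N_i}}^{-1}\|_{\mathrm{op}}$.} Here I would exhibit a test vector certifying a small eigenvalue by localizing near the boundary. Pick $\ell_\star$ so that the B-spline $B_{\ell_\star}$ is supported in a neighbourhood of $x\approx A_{N_i}$, and take ${\bf a}=e_{\ell_\star}$. Using the Aronson upper bound and $\|B_{\ell_\star}\|_{L^2}^2\asymp A_{N_i}/K_{N_i}$ together with
\[
\sup_{x\in\mathrm{supp}(B_{\ell_\star})}\frac{1}{n}\sum_{k=0}^{n-1} p_i(k\Delta_n,0,x) \,\leq\, C\exp\!\left(-\tfrac{1}{6}A_{N_i}^{2}\right),
\]
(the constant $1/6$ coming from the worst case $t=1$ in $\exp(-c_1 A_{N_i}^2/t)$ together with the ellipticity constants), one obtains $\lambda_{\min}(\Psi_{K_{N_i}})\leq C\,A_{N_i}K_{N_i}^{-1}\exp(-A_{N_i}^2/6)$, hence the announced lower bound on $\|\Psi_{K_{N_i}}^{-1}\|_{\mathrm{op}}$. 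The condition $K_{N_i}\leq\sqrt{N_i}$ only enters to guarantee that $B_{\ell_\star}$ is sufficiently well localized at the boundary and to keep the Aronson-type constants under control uniformly in the growing interval $[-A_{N_i},A_{N_i}]$.

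\textbf{Main obstacle.} The routine linear-algebraic reduction to $\|t\|_{n,i}^2$ and the B-spline norm equivalence are standard; the delicate point is turning the Aronson-type two-sided estimates into averages over $k$ that produce sharp exponents $1/6$ and $2/3$. This requires carefully splitting the sum $\frac{1}{n}\sum_k p_i(k\Delta_n,0,x)$ according to whether $k\Delta_n$ is smaller or larger than a truncation level $t_0\asymp 1/\log(N_i)$ (which is the source of the $\log(N_i)$ factor), and tracking the ellipticity constants $\sigma_0^*,\sigma_1^*$ through the Aronson estimates. Approximation of the continuous time integral $\int_0^1 p_i(t,0,x)\,\dd t$ by its Riemann sum $\frac{1}{n}\sum_k p_i(k\Delta_n,0,x)$ must also be controlled, and this is where Assumption~\ref{ass:RegEll}(iii) ensuring smoothness of $t\mapsto p_i(t,0,x)$ is used.
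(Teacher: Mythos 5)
Your skeleton---reduce to the Rayleigh quotient of $\Psi_{K_{N_i}}$, combine the B-spline $L^2$ norm equivalence with two-sided Gaussian bounds on the transition density, and certify a small eigenvalue via a B-spline localized near $\pm A_{N_i}$---is exactly the paper's strategy, and your argument for (i) is valid and arguably cleaner than the paper's (which builds a lower-triangular B-spline evaluation matrix and invokes Lemma~1 of \cite{comte2020nonparametric}). The genuine gap lies in the density bounds you invoke for (ii). You rely on generic Aronson-type estimates of the form $\frac{c_0}{\sqrt t}\exp(-C_0 x^2/t)\le p_i(t,0,x)\le \frac{C_1}{\sqrt t}\exp(-c_1 x^2/t)$; these are Proposition~\ref{prop:densityTransition}, whose exponential constants ($c>1$, $K>1$) are \emph{unspecified} and cannot be "absorbed" to produce the exact exponents $\tfrac16$ and $\tfrac23$ in the statement. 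The paper instead uses a sharpened two-sided Gaussian estimate valid only when $\sigma^*$ is constant (Lemma~\ref{lm:DensityConstSigma}): for any $q>1$, $\frac{1}{K_q\sqrt t}\exp(-\frac{(2q-1)x^2}{2q\sigma^{*2}t})\le p(t,x)\le\frac{K_q}{\sqrt t}\exp(-\frac{x^2}{2q\sigma^{*2}t})$, proved via Girsanov and H\"older with a tunable exponent. Taking $q=3/2$ and $\sigma^*=1$ gives exactly $\exp(-2x^2/(3t))$ below and $\exp(-x^2/(3t))$ above, which yields $\tfrac23$ for the upper bound on $\|\Psi^{-1}_{K_{N_i}}\|_{\rm op}$ and---evaluating the upper density bound at $x=u_{K_{N_i}-1}=A_{N_i}(K_{N_i}-2)/K_{N_i}$, $t=1$, with $(1-2/K_{N_i})^2\ge\tfrac12$ for $K_{N_i}$ large---$\tfrac16$ for the lower bound. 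This is precisely why the lemma sits in Section~\ref{subsec:ratesKnownSigma}, where $\sigma^*=1$; without this refinement your plan stalls at generic, non-numerical exponents.

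Two secondary issues. First, your account of the $\log(N_i)$ factor is misattributed and your intermediate display $\frac1n\sum_{k:k\Delta_n\ge t_0}p_i(k\Delta_n,0,x)\ge c\exp(-\tfrac23A^2_{N_i})$ actually omits it. In the paper's proof the $\log$ loss comes from the Laplace-type concentration of $\int\exp(-2x^2/(3s))\,\dd s$ near $s=1$: the integral is restricted to a window $[1-\log^{-1}(N_i),\,1-\tfrac12\log^{-1}(N_i)]$ of width $\tfrac{1}{2\log(N_i)}$, on which the integrand is $\gtrsim\exp(-\tfrac23A^2_{N_i})$ after absorbing the $(1-\log^{-1}(N_i))^{-1}$ correction into the constant using $A^2_{N_i}\lesssim\log N_i$; no discarding of small $k$ is needed, and the Riemann-sum-to-integral comparison uses only the monotonicity of $s\mapsto\exp(-a/s)$, not time-regularity of $p_i$ from Assumption~\ref{ass:RegEll}(iii). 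Second, your opening display for $\|t\|^2_{n,i}$ should treat the $k=0$ term separately since $X_0=0$ is deterministic: the correct expression is $t^2(0)/n+\frac1n\sum_{k=1}^{n-1}\int_{-A_{N_i}}^{A_{N_i}}t^2(x)p_i(k\Delta,x)\,\dd x$.
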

A major consequence of Lemma~\ref{lm:MinEigenValue} is to give the order of $A_{N_i}$ {\it w.r.t.} $N_i$ to obtain optimal rates of convergence for the estimation of the drift function $b_i^*$. \textcolor{black}{Similar conditions are considered in \cite{comte2020nonparametric}.}

\textcolor{black}{For fixed $n$ and $N_i$ in $\mathbb{N}^{*}$, let us denote}
\begin{equation*}
    \textcolor{black}{\Omega_{n,N_i,K_{N_i}} := \underset{h \in \mathcal{S}_{K_{N_i},M}\setminus\{0\}}{\bigcap}{\left\{\left|\frac{\|h\|^{2}_{n,N_i}}{\|h\|^{2}_{n,i}} - 1\right| \leq \frac{1}{2}\right\}}}.
\end{equation*}
\textcolor{black}{The empirical norms $\|h\|_{n,N_i}$ and $\|h\|_{n,i}$ of any function $h \in \mathcal{S}_{K_{N_i},M}\setminus\{0\}$ are equivalent on the random set $\Omega_{n,N_i,K_{N_i}}$. More precisely, on $\Omega_{n,N_i,K_{N_i}}$, for all $h \in \mathcal{S}_{K_{N_i},M}\setminus\{0\}$, we have
$$ \frac{1}{2}\|h\|^{2}_{n,i} \leq \|h\|^{2}_{n,N_i} \leq \frac{3}{2}\|h\|^{2}_{n,i}. $$
}
\textcolor{black}{On $\Omega_{n,N_i,K_{N_i}}$ we are able to derive faster rate of convergence of the risk $\E\left[\|\w{b}_i - b^{*}_{A_{N_i},i}\|^{2}_{n,N_i}\right]$ while we control the probability $\P\left(\Omega^{c}_{n,N_i,K_{N_i}}\right)$. 
More precisely, we have the bound}
\begin{align*}
    \textcolor{black}{\E\left[\|\w{b}_i - b^{*}_{A_{N_i},i}\|^{2}_{n,N_i}\right]} & \textcolor{black}{= \E\left[\|\w{b}_i - b^{*}_{A_{N_i},i}\|^{2}_{n,N_i}\one_{\Omega_{n,N_i,K_{N_i}}}\right] + \E\left[\|\w{b}_i - b^{*}_{A_{N_i},i}\|^{2}_{n,N_i}\one_{\Omega^{c}_{n,N_i,K_{N_i}}}\right]} \\
    & \textcolor{black}{\leq \E\left[\|\w{b}_i - b^{*}_{A_{N_i},i}\|^{2}_{n,N_i}\one_{\Omega_{n,N_i,K_{N_i}}}\right] + 4A^{2}_{N}\log(N)\P\left(\Omega^{c}_{n,N_i,K_{N_i}}\right),}
\end{align*}
\textcolor{black}{and the probability $\P\left(\Omega^{c}_{n,N_i,K_{N_i}}\right)$ satisfies}
\begin{equation}
    \label{eq:proba-omega-comp1}
    \textcolor{black}{\P\left(\Omega^{c}_{n,N_i,K_{N_i}}\right) \leq 2(K_{N_i}+M)\exp\left(-C \frac{N_i}{A_{N_i}\left\|\Psi^{-1}_{K_{N_i}}\right\|_{\mathrm{op}}}\right)}
\end{equation}
\textcolor{black}{(the proof of the lemma in Section~\ref{proof:lemmaprobacom}, follows the ideas of \cite{comte2020regression}).
From Equation~\eqref{eq:proba-omega-comp1} and Lemma~\ref{lm:MinEigenValue}, we obtain
}
\begin{equation}
 \label{eq:proba-omega-comp2}
    \textcolor{black}{\P\left(\Omega^{c}_{n,N_i,K_{N_i}}\right) \leq 2(K_{N_i}+M)\exp\left(-C \frac{N_i}{K_{N_i}\log(N_i)}\exp\left(-\frac{2}{3}A^{2}_{N_i}\right)\right)}
\end{equation}
%
%

Notably, conditional on $(\one_{\{Y_1=i\}}, \ldots, \one_{\{Y_N=i\}})$ \textcolor{black}{and on the event $\{N_i > 1\}$}, if $K_{N_i}$ is of order $N_{i}^{1/(2\beta+1)}$ (up to some extra logarithmic factors), and $A_{N_i}$ is chosen such that \textcolor{black}{the upper-bound of $\P\left(\Omega^{c}_{n,N_i,K_{N_i}}\right)$ is dominated by $K_{N_i}/N_i$ as $N$ tends to infinity}, then
the drift estimator converges as $N_i^{-2\beta/(2\beta+1)}$ {\it w.r.t.} $\|.\|^2_{n,i}$.
Interestingly, this is the same rate of convergence obtained in \citep{denis2020ridge} when the estimation of the drift function is performed over a fixed compact interval.
From this remark, 
if $K_{N_i}$ is of order $ \log^{-5/2}(N_i)N^{1/(2\beta+1)}_{i}$, 
\textcolor{black}{and} $A_{N_i} \leq \sqrt{\frac{3\beta}{2\beta+1} \log(\textcolor{black}{N_i})}$, \textcolor{black}{we deduce from Equation~\eqref{eq:proba-omega-comp2} that there exists a constant $C > 0$ such that} 
\begin{equation*}
    \textcolor{black}{\P\left(\Omega^{c}_{n,N_i,K_{N_i}}\right) \leq 2(K_{N_i}+M)\exp\left(-C\log^{3/2}(N_i)\right)}
\end{equation*}
\textcolor{black}{and the desired result is obtained since $N_i \rightarrow \infty$ a.s. as $N \rightarrow \infty$.} Furthermore, the lemma shows that the order of $A_{N_i}$ is tight. Indeed, for another choice of $A_{N_i}$ such that 
\begin{equation*}
\dfrac{A_{N_i}}{\sqrt{\log(N_i)}} \longrightarrow +\infty \;\; {\rm as} \;\; N \rightarrow +\infty,
\end{equation*}
\textcolor{black}{then, from Equation~\eqref{eq:proba-omega-comp2}, the upper-bound of $\P\left(\Omega^{c}_{n,N_i,K_{N_i}}\right)$ is of order $K_{N_i}$ since}
\begin{equation*}
    \textcolor{black}{\exp\left(-C \frac{N_i}{K_{N_i}\log(N_i)}\exp\left(-\frac{2}{3}A^{2}_{N_i}\right)\right) \longrightarrow 1 ~~ a.s. ~~ \mathrm{as} ~ N \rightarrow \infty}
\end{equation*}
\textcolor{black}{and the convergence of $\P\left(\Omega^{c}_{n,N_i,K_{N_i}}\right)$ to $0$ is no longer guaranteed.}

Based on this observation, the next result 
 establishes the rates of convergence for our proposed drift estimator on the event $\{N_i > 1\}$.

\begin{theo}
    \label{thm:LossErrorDrift}
    Let Assumptions~\ref{ass:RegEll}, ~\ref{ass:Novikov} and~\ref{ass:smoothnessDrift} be satisfied. Let $b^{*}_{A_{N_i},i} = b^*_i\one_{[-A_{N_i},A_{N_i}]}$ 
    defined on the event $\{N_i > 1\}$.
    If
     $A_{N_i} \leq \sqrt{\frac{3\beta}{2\beta+1}\log(N_i)}$, $K_{N_i} \propto \left(\log^{-5/2}(N_i)N^{1/(2\beta+1)}_{i}\right)$, and $\Delta_n = O\left(N^{-1}\right)$. Then for all $i \in  \mathcal{Y}$
    \begin{equation*}
     \E\left[\left\|\w{b}_i-b^{*}_{A_{N_i},i}\right\|^{2}_{n,i}\one_{N_i>1}\right]\leq C\log^{6\beta}(N)N^{-2\beta/(2\beta+1)},
    \end{equation*}
    where $C$ is a constant which depends on ${\bf b}^*$.
\end{theo}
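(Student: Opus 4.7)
The plan is a bias–variance decomposition adapted to the growing support $[-A_{N_i},A_{N_i}]$. I would pick a reference $b^*_M \in \mathcal{S}_{K_{N_i},M}$ given by Proposition~\ref{prop:approx}, extended to $\beta$-Hölder targets via the standard $B$-spline approximation estimate (valid as soon as the spline order $M$ exceeds $\beta$), so that $\sup_{x\in[-A_{N_i},A_{N_i}]}|b^*_M(x)-b^*_i(x)| \leq C(A_{N_i}/K_{N_i})^\beta$. With $A_{N_i}\leq\sqrt{(3\beta/(2\beta+1))\log(N_i)}$ and $K_{N_i}\asymp \log^{-5/2}(N_i)N_i^{1/(2\beta+1)}$, the squared bias $\|b^*_M-b^*_{A_{N_i},i}\|_{n,i}^2$ is of order $\log^{6\beta}(N)N^{-2\beta/(2\beta+1)}$, matching the claimed rate. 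By the triangle inequality it is enough to prove $\E[\|\widehat{b}_i-b^*_M\|_{n,i}^2\one_{N_i>1}]$ is of the same order.

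To handle the truncation step, note that the coefficient constraint in $\mathcal{S}_{K_{N_i},M}$ and the uniform bound $\|B_\ell\|_\infty\leq 1$ keep $b^*_M$ below the clipping threshold $A_{N_i}\log^{1/2}(N)$, so $(\widehat{b}_i-b^*_M)^2 \leq (\tilde{b}_i-b^*_M)^2$ pointwise and it suffices to control $\tilde{b}_i$. Using the optimality relation $\gamma_{n,N_i}(\tilde{b}_i)\leq \gamma_{n,N_i}(b^*_M)$ for the empirical contrast $\gamma_{n,N_i}(h)=(nN_i)^{-1}\sum_{j\in\mathcal{I}_i}\sum_k (Z^j_{k\Delta_n}-h(\bar{X}^j_{k\Delta_n}))^2$ and the decomposition $Z^j_{k\Delta_n} = b^*_i(\bar{X}^j_{k\Delta_n}) + \Delta_n^{-1}\int_{k\Delta_n}^{(k+1)\Delta_n} dW^j_s + R^j_{k,n}$, where $R^j_{k,n}$ collects the drift discretization error, a standard rearrangement yields
\begin{equation*}
\|\tilde{b}_i - b^*_M\|_{n,i,\mathrm{emp}}^2 \leq 2\,\nu_{n,N_i}(\tilde{b}_i - b^*_M) + 2\,\rho_{n,N_i}(\tilde{b}_i - b^*_M),
\end{equation*}
with $\nu_{n,N_i}$ a centered, martingale-driven linear form and $\rho_{n,N_i}$ the residual term. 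Under Assumption~\ref{ass:RegEll} the discretization piece is of order $\sqrt{\Delta_n} = O(N^{-1/2})$, which is negligible.

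The main obstacle is a sharp control of $\nu_{n,N_i}$ on the spline space. Writing $t=\sum_\ell a_\ell B_\ell$ and exploiting $\|t\|_{n,i}^2 = \boldsymbol{a}^\top \Psi_{K_{N_i}} \boldsymbol{a}$, Cauchy–Schwarz against a basis expansion, together with a standard trace computation for the variance of the Brownian-driven coordinates, leads to the bound $\sup_{\|t\|_{n,i}\leq 1}\E[\nu_{n,N_i}(t)^2] \leq C K_{N_i} \|\Psi_{K_{N_i}}^{-1}\|_{\mathrm{op}}/N_i$. Invoking Lemma~\ref{lm:MinEigenValue} together with the prescribed choice $A_{N_i}\leq\sqrt{(3\beta/(2\beta+1))\log(N_i)}$, the right-hand side is $\leq C K_{N_i}\log^{-2}(N_i) \cdot 1/\log^0(N_i)$, so the variance contribution is of order $K_{N_i}/N_i \asymp \log^{-5/2}(N)N^{-2\beta/(2\beta+1)}$, dominated by the bias. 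The precise cap on $A_{N_i}$ is essential here: any larger $A_{N_i}$ would violate~\eqref{eq:eqConditionPsiMat}, since $\|\Psi_{K_{N_i}}^{-1}\|_{\mathrm{op}}$ grows exponentially in $A_{N_i}^2$ by Lemma~\ref{lm:MinEigenValue}, and the variance would blow up.

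The final step is to pass from the empirical contrast norm $\|\cdot\|_{n,i,\mathrm{emp}}$ to the target norm $\|\cdot\|_{n,i}$. I would use a Bernstein-type concentration inequality on the totally-bounded class $\mathcal{S}_{K_{N_i},M}$, whose logarithmic covering number is polynomial in $N$ as noted after Proposition~\ref{prop:approx}, and use the clipping at $A_{N_i}\log^{1/2}(N)$ to obtain a manageable envelope; the resulting extra term is a lower-order power of $N$. Summing the bias, variance, discretization, and concentration pieces produces the announced bound $C\log^{6\beta}(N)N^{-2\beta/(2\beta+1)}$. The heart of the argument is really the variance step: balancing the exponential growth of $\|\Psi_{K_{N_i}}^{-1}\|_{\mathrm{op}}$ in $A_{N_i}^2$ against the requirement that the estimation window extend to the whole line is exactly what fixes the logarithmic order of $A_{N_i}$ and forces the extra $\log^{6\beta}(N)$ factor.
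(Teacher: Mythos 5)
Your overall decomposition (spline approximant, bias-variance split, empirical-to-population norm passage) matches the shape of the paper's proof, and your bias arithmetic $(A_{N_i}/K_{N_i})^{2\beta}\asymp\log^{6\beta}(N)N^{-2\beta/(2\beta+1)}$ is correct. But the way you handle the variance, and where you locate the role of $\|\Psi_{K_{N_i}}^{-1}\|_{\mathrm{op}}$, does not hold together.

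Your stated variance bound
$\sup_{\|t\|_{n,i}\leq 1}\E[\nu_{n,N_i}(t)^2]\leq CK_{N_i}\|\Psi_{K_{N_i}}^{-1}\|_{\mathrm{op}}/N_i$
is wrong. Writing $t$ in the $\Psi^{-1/2}$-orthonormalized coordinates and using that the martingale increments $\xi^{j}_{k\Delta}$ have conditional variance $O(1/\Delta)$, the paper obtains
\begin{equation*}
\E_i\Bigl[\sup_{\|t\|_{n,i}=1}\nu^2(t)\Bigr]\leq\frac{\sigma_1^{*2}}{N_i}\,\mathrm{Tr}\bigl(\Psi_{K_{N_i}}^{-1}\Psi_{K_{N_i}}\bigr)=\frac{\sigma_1^{*2}(K_{N_i}+M)}{N_i},
\end{equation*}
with no $\|\Psi^{-1}\|_{\mathrm{op}}$ factor at all: the $\Psi^{-1/2}$ and $\Psi^{1/2}$ contributions cancel in the trace. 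If you take your inequality literally and plug in Lemma~\ref{lm:MinEigenValue} with the prescribed $A_{N_i}$, you get $K_{N_i}\log^{-2}(N_i)\asymp\log^{-9/2}(N)N^{1/(2\beta+1)}$, which \emph{diverges}; your own subsequent claim that the variance term is $K_{N_i}/N_i$ contradicts the displayed bound, so some term was silently dropped.

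The deeper gap is that you have misattributed the role of $\|\Psi^{-1}_{K_{N_i}}\|_{\mathrm{op}}$ and, with it, the reason the cap $A_{N_i}\leq\sqrt{\tfrac{3\beta}{2\beta+1}\log N_i}$ is needed. In the paper's argument the operator norm of $\Psi^{-1}$ never enters the variance; it appears in the concentration step, namely in controlling $\P(\Omega^c_{n,N_i,K_{N_i}})$, where $\Omega_{n,N_i,K_{N_i}}$ is the event on which the empirical quadratic form $\|\cdot\|^2_{n,N_i}$ is within a factor $1\pm1/2$ of $\|\cdot\|^2_{n,i}$ uniformly on $\mathcal{S}_{K_{N_i},M}$. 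Lemma~\ref{lm:proba-complementary-omega} establishes $\P_i(\Omega^c_{n,N_i,K_{N_i}})\leq cK_{N_i}/N_i$ via a matrix Chernoff inequality, and that argument requires exactly the constraint $\|\Psi^{-1}_{K_{N_i}}\|_{\mathrm{op}}\leq CN_i/\log^2(N_i)$ of Equation~\eqref{eq:eqConditionPsiMat}; it is this requirement, combined with Lemma~\ref{lm:MinEigenValue}'s exponential lower bound $\|\Psi^{-1}_{K_{N_i}}\|_{\mathrm{op}}\gtrsim (K_{N_i}/A_{N_i})\exp(A_{N_i}^2/6)$, that fixes the admissible order of $A_{N_i}$. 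Your proposed substitute---a Bernstein-type inequality over the covering of the totally bounded class $\mathcal{S}_{K_{N_i},M}$---is the mechanism the paper uses to get the \emph{crude} rate $N^{-1/5}$ in Theorem~\ref{thm:cveDriftSig}; it does not, by itself, produce $\P(\Omega^c)\lesssim K_{N_i}/N_i$ and so cannot deliver the sharper $N^{-2\beta/(2\beta+1)}$ rate nor explain why $A_{N_i}$ of strictly larger than $\sqrt{\log N_i}$ order breaks the argument. You would need to explicitly introduce $\Omega_{n,N_i,K_{N_i}}$, invoke a matrix deviation bound conditioned on the inverse-Gram-matrix size, and then handle the complementary event $\Omega^c$ by Cauchy--Schwarz and the crude $A_{N_i}^4\log^2(N_i)$ envelope on $\|\widehat{b}_i-b^*_{A_{N_i},i}\|^4_{n,N_i}$, as the paper does.
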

The above result shows that for a proper choice of $A_{N_i}$ the drift estimators $\w{b}_i$ achieves, up to a logarithmic factor, the minimax rates of convergence {\it w.r.t.} $\|.\|_{n,i}$ (see Theorem~4.7 in~\citep{denis2020ridge}). Notably, Theorem~\ref{thm:LossErrorDrift} extends results obtained in~\citep{denis2020ridge} to the estimation of the drift function on an interval which depends on $N$.

In Section~\ref{subsubsec:ratesBoundedDriftOptimal} and Section~\ref{subsubsec:rateReentrentDrift}, we exploit this result to derive rates of convergence for the plug-in classifier $\w{g}$ defined as follows. 
On the event $\{\min_{i \in \cY} N_i > 1\}$, we consider the estimators $\w{\bf b}$ presented in Section~\ref{subsec:estimators}, and define the plug-in classifier $\w{g} = \w{g}_{\w{\mfp},\w{\bf b},1}$. On the complementary event $\{\min_{i \in \cY} N_i \leq 1\}$, we simply set 
$\w{g} = 1$.


\subsection{Rates of convergence: bounded drift functions}
\label{subsubsec:ratesBoundedDriftOptimal}

In this section, we assume that, additionally to $\sigma^*=1$, Assumption~\ref{ass:boundedDrift} is fulfilled (the drift function is bounded). 
Hence, we can use Proposition~\ref{prop:eqNorm}, and apply Theorem~\ref{thm:LossErrorDrift} to derive rates of convergence for plug-in estimator $\hat{g}$.

\begin{theo}
\label{thm:AlmostOptimalRate}
 Grant Assumptions~\ref{ass:RegEll}, ~\ref{ass:Novikov}, ~\ref{ass:boundedDrift}, ~\ref{ass:smoothnessDrift}.
Assume that for all $i\in\cY$, on the event $\{N_i>1\}$, \textcolor{black}{$A_{N_i}=\sqrt{\frac{6\beta}{2\beta+1}\log(N_i)}$} and $K_{N_i}\propto\left(\log^{-5/2}(N_i)N^{1/(2\beta+1)}_{i}\right)$, and $\Delta_n = O\left(N^{-1}\right)$.
Then the plug-in classifier $\w{g}= \w{g}_{\w{\bf b}, 1}$ satisfies
\begin{equation*}
\E\left[\mathcal{R}(\w{g})-\mathcal{R}(g^*)\right] \leq C\exp\left(\sqrt{c\log(N)}\right)N^{-\beta/(2\beta+1)}
\end{equation*}
where $C,c>0$ are constants depending on ${\bf b}^*$, $\beta, K$ and $\mfp_0$.
\end{theo}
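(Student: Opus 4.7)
The plan is to start from the comparison inequality of Theorem~\ref{thm:comparisonInequality}. Since $\sigma^* \equiv \widehat{\sigma} \equiv 1$, the diffusion term vanishes and the bound reduces to
\begin{equation*}
\mathbb{E}[\mathcal{R}(\widehat{g})-\mathcal{R}(g^*)] \leq C\Bigl(\sqrt{\Delta_n} + \tfrac{1}{\mfp_0^*\sqrt{N}} + \sum_{i=1}^K \mathbb{E}[\|\widehat{b}_i - b_i^*\|_n]\Bigr).
\end{equation*}
Under $\Delta_n = O(N^{-1})$, the first two terms are $O(N^{-1/2})$, which is dominated by the target rate $N^{-\beta/(2\beta+1)}$, so only the drift-error terms are active. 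I would next isolate the bad event $\mathcal{A}^c=\{\min_i N_i \le 1\}$; since $N_i \sim \mathrm{Bin}(N,\mfp_i^*)$ with $\mfp_i^* \ge \mfp_0^*$, Hoeffding's inequality gives $\P(\mathcal{A}^c) \le K \exp(-cN)$, which is exponentially small and absorbed in the announced rate (using the default $\widehat{g}\equiv 1$ on $\mathcal{A}^c$, which has excess risk at most $1$).

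On $\mathcal{A}$, I would rewrite $\|\widehat{b}_i-b_i^*\|_n^2 = \sum_{j=1}^K \mfp_j^* \|\widehat{b}_i-b_i^*\|_{n,j}^2$ and invoke Proposition~\ref{prop:eqNorm} conditionally on $\mathcal{D}_N$ (so $\widehat{b}_i$ is deterministic) with $Z = \tfrac{1}{n}\sum_k (\widehat{b}_i-b_i^*)^2(X_{k\Delta})$; the boundedness hypothesis $Z \le C \log^2(N)$ needed by the proposition holds because $\|\widehat{b}_i\|_\infty \le A_{N_i}\sqrt{\log N} \lesssim \log N$ and $|b_i^*|\le C_{\mathbf{b}^*}$. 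This yields, for $j \neq i$,
\begin{equation*}
\mathbb{E}\bigl[\|\widehat{b}_i-b_i^*\|_{n,j}^2 \one_{\mathcal{A}}\bigr] \le C \exp(\sqrt{c\log N})\, \mathbb{E}\bigl[\|\widehat{b}_i-b_i^*\|_{n,i}^2 \one_{\mathcal{A}}\bigr] + C\log^{\alpha}(N)/N,
\end{equation*}
so the task reduces to controlling the class-$i$ norm of the class-$i$ estimator.

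For this, I would split via triangle inequality,
\begin{equation*}
\|\widehat{b}_i-b_i^*\|_{n,i}^2 \le 2\|\widehat{b}_i - b^*_{A_{N_i},i}\|_{n,i}^2 + 2\|b^*_{A_{N_i},i}-b_i^*\|_{n,i}^2,
\end{equation*}
bound the first (estimation) term directly by Theorem~\ref{thm:LossErrorDrift}, and handle the second (approximation) term via the Gaussian tail of $X$. The representation $X_t = W_t + \int_0^t b_i^*(X_s)\,\dd s$ together with $|b_i^*|\le C_{\mathbf{b}^*}$ yields $|X_t|\le |W_t|+C_{\mathbf{b}^*}$, hence
\begin{equation*}
\P_{X|Y=i}(|X_{k\Delta}|>A_{N_i}) \le 2\exp\bigl(-(A_{N_i}-C_{\mathbf{b}^*})^2/2\bigr) \le C\exp(c\sqrt{\log N})\,\exp(-A_{N_i}^2/2),
\end{equation*}
which, with $A_{N_i}^2=\tfrac{3\beta}{2\beta+1}\log N_i$ and $N_i$ concentrated near $N\mfp_i^*$, gives an approximation error compatible with the target (up to the $\exp(\sqrt{c\log N})$ slack). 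Summing the two pieces and pulling the square root out via Jensen, $\mathbb{E}[\|\widehat{b}_i-b_i^*\|_n]\le\sqrt{\mathbb{E}[\|\widehat{b}_i-b_i^*\|_n^2]}$, produces the announced rate $N^{-\beta/(2\beta+1)}$.

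The main obstacle is the calibration step: the choice $A_{N_i}^2=\tfrac{3\beta}{2\beta+1}\log N_i$ is simultaneously the largest value compatible with the operator-norm bound of Lemma~\ref{lm:MinEigenValue} (so that Theorem~\ref{thm:LossErrorDrift} delivers the optimal $N^{-2\beta/(2\beta+1)}$ rate), and just large enough that the Gaussian tail above, combined with the boundedness of $b_i^*$, keeps the truncation error within the same order of magnitude. Verifying that this single choice of $A_{N_i}$ balances both error terms — and that the multiplicative $\exp(c\sqrt{\log N})$ arising from both Proposition~\ref{prop:eqNorm} and from the cross term $e^{C_{\mathbf{b}^*}A_{N_i}}$ in the tail can be packaged together into the final $\exp(\sqrt{c\log N})$ factor — is the delicate part of the argument.
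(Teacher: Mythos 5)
Your high-level ingredients match the paper's (Theorem~\ref{thm:comparisonInequality}, Proposition~\ref{prop:eqNorm}, Theorem~\ref{thm:LossErrorDrift}, Gaussian tails, the bad event $\{\min_i N_i \le 1\}$), but the way you assemble them loses a polynomial factor that you assert away. The gap is in the sentence ``gives an approximation error compatible with the target'' after you split
\begin{equation*}
\|\widehat{b}_i-b_i^*\|_{n,i}^2 \le 2\|\widehat{b}_i - b^*_{A_{N_i},i}\|_{n,i}^2 + 2\|b^*_{A_{N_i},i}-b_i^*\|_{n,i}^2.
\end{equation*}
With $A_{N_i}^2=\tfrac{3\beta}{2\beta+1}\log N_i$, your tail bound gives $\|b^*_{A_{N_i},i}-b_i^*\|_{n,i}^2 \lesssim \exp(-A_{N_i}^2/2) = N_i^{-3\beta/(2(2\beta+1))}$, whereas Theorem~\ref{thm:LossErrorDrift} gives $\|\widehat{b}_i - b^*_{A_{N_i},i}\|_{n,i}^2 \lesssim N^{-2\beta/(2\beta+1)} = N^{-4\beta/(2(2\beta+1))}$. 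Since $3\beta<4\beta$, the truncation term \emph{dominates}, and after Jensen you obtain $\mathbb{E}\|\widehat{b}_i-b_i^*\|_n \lesssim N^{-3\beta/(4(2\beta+1))}$, which is strictly slower than the claimed $N^{-\beta/(2\beta+1)}$ (the gap is $N^{-\beta/(4(2\beta+1))}$, a genuine polynomial factor that the $\exp(\sqrt{c\log N})$ slack cannot absorb). Notice that $N^{-3\beta/(4(2\beta+1))}$ is precisely the rate of Theorem~\ref{thm:RateClass.Re-entrant.Drift}: by committing the truncation error to the $\|.\|_{n,i}^2$ decomposition you effectively reproduce the weaker re-entrant-drift bound and never exploit the boundedness of $\mathbf{b}^*$ in the place where it matters. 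You also cannot rescue this by enlarging $A_{N_i}$: balancing $\exp(-A^2/4) \asymp N^{-\beta/(2\beta+1)}$ would require $A_{N_i}^2 \asymp \tfrac{4\beta}{2\beta+1}\log N$, which violates the constraint $A_{N_i}^2 \le \tfrac{3\beta}{2\beta+1}\log N$ imposed by Lemma~\ref{lm:MinEigenValue} and Equation~\eqref{eq:eqConditionPsiMat} for Theorem~\ref{thm:LossErrorDrift} to hold.

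The paper sidesteps this by \emph{not} putting the truncation error into the $L^2$ estimation error at all. It starts from Proposition~\ref{prop:excessRiskClass} and introduces the intermediate functional $\bar{\pi}^{A_{N_0}}_i = \phi_i(\bar{\mathbf{F}}^{A_{N_0}})$ built from the truncated drifts $b^*_{A_{N_0},i}$, then splits
\begin{equation*}
|\widehat{\pi}_i - \pi^*_i| \le |\widehat{\pi}_i - \bar{\pi}^{A_{N_0}}_i| + |\bar{\pi}^{A_{N_0}}_i - \bar{\pi}^*_i| + |\bar{\pi}^*_i - \pi^*_i|.
\end{equation*}
The first term goes through Theorem~\ref{thm:comparisonInequality} and brings in $\|\widehat{b}_i - b^*_{A_{N_i},i}\|_{n,i}$, which Theorem~\ref{thm:LossErrorDrift} controls exactly at $N^{-\beta/(2\beta+1)}$ with no interference. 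The second term, $\mathbb{E}[|\bar{F}^{A_{N_0}}_i - \bar{F}_i|]$, is an $L^1$ quantity in which each summand carries the factor $\one_{|X_{k\Delta}|>A_{N_0}}$; using $\|b^*_i\|_\infty\le C_{\mathbf{b}^*}$ the paper bounds it by (essentially) $\sup_t\P(|X_t|>A_{N_0})$, i.e.\ at order $\P$ rather than $\sqrt{\P}$, which is the decisive gain. (The martingale piece $\sum_k b^*_i(X_{k\Delta})\one_{|X_{k\Delta}|>A}\Delta W_k$ needs a little care --- one should Cauchy--Schwarz against the indicator of the event $\{\max_k|X_{k\Delta}|>A\}$ on which the sum is nonzero, and use the maximal-inequality tail of $\sup_{[0,1]}|X_t|$, rather than taking a naive $L^2$ bound which would again only give $\sqrt{\P}$.) To fix your argument you would need to perform the same trick: handle the truncation error directly at the excess-risk level, where Assumption~\ref{ass:boundedDrift} lets you bound it linearly in $\P(|X_t|>A)$, instead of inside $\|.\|_{n,i}^2$ where a square root is unavoidable.
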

The above theorem shows that the plug-in classifier $\w{g}$ achieves faster rates of convergence than in the case where $\sigma^*$ is unknown (see Theorem~\ref{thm:boundedDrift}). Notably,  the obtained rate is of the same order, up to a factor of order $\exp\left(\sqrt{c\log(N)}\right)$, than the
rates of convergence provided in~\cite{gadat2020optimal} in the framework of 
binary classification of functional data where the observation are assumed to come from  a white noise model. In their setting, $\sigma^* = 1$ and the drift functions depend only on the observation time interval, which is also assumed to be $[0,1]$. Therefore, our specific setup is more challenging since the drift functions are space-dependent, which involves to deal with estimation of function on a non-compact interval. Finally, it is worth noting that, up to $\exp\left(\sqrt{c\log(N)}\right)$ factor, the rate of convergence provided in Theorem~\ref{thm:AlmostOptimalRate} is the same as the minimax rates in the classical classification framework where the feature vector $X$ belongs to $\mathbb{R}$ and that $X$ admits a lower bounded density~\citep{Yang99, audibert2007fast}.

\subsection{Rates of convergence: when the drift functions are re-entrant}
\label{subsubsec:rateReentrentDrift}
 
In this section, we study performance of the plug-in classifier when the drift functions are not necessarily bounded. In this context, rates of convergence are obtained under the following assumption.
\begin{assumption}(re-entrant drift function)
    \label{ass:Re-entrant.Drift}
    For each label $i\in\cY$, there exists $c_0>4$ and $K_0\in\R$ such that
    $$\forall x\in\R, \ \ b^{*}_{i}(x)x\leq -c_0x^2+K_0.$$
\end{assumption}
An important consequence of this assumption is that there exists $C > 0$ (see Proposition~1.1 in~\citep{gobet2002lan}) such that
\begin{equation}
\label{eq:eqExpMoment}
\E\left[\exp(4|X_t|^2)\right] \leq C,    
\end{equation}
which yields a better bound on the tail probability $\P\left(|X_t| \geq A\right)$ for $A > 0$. 
It worth noting that under Assumption~\ref{ass:Re-entrant.Drift}, the drift functions are not bounded. Hence, we can not take advantage of Proposition~\ref{prop:eqNorm} to derive rates of convergence. Nonetheless, we obtain the following result.
\begin{theo}
    \label{thm:RateClass.Re-entrant.Drift}
    Grant Assumptions~\ref{ass:RegEll}, ~\ref{ass:Novikov}, ~\ref{ass:smoothnessDrift},~\ref{ass:Re-entrant.Drift}.
    Assume that for all $i\in\cY$, on the event $\{N_i>1\}$, $A_{N_i}=\sqrt{\frac{3\beta}{2\beta+1}\log(N_i)}$ and $K_{N_i}\propto\left(\log^{-5/2}(N_i)N^{1/(2\beta+1)}_{i}\right)$, and $\Delta_n = O\left(N^{-1}\right)$. Then, the plug-in classifier $\w{g}$ satisfies
    \begin{equation*}
\E\left[\mathcal{R}(\w{g})-\mathcal{R}(g^*)\right] \leq C\log^{3\beta+1}(N)N^{-3\beta/4(2\beta+1)}.
\end{equation*}
\end{theo}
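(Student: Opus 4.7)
The plan is to mirror the strategy used for Theorem~\ref{thm:AlmostOptimalRate}, but to bypass Proposition~\ref{prop:eqNorm} (which required bounded drifts) by a direct change-of-measure argument. The pay-off of this substitution is precisely the loss from exponent $\beta/(2\beta+1)$ to $3\beta/(4(2\beta+1))$: converting the $\|\cdot\|_{n,i}$ bound on the drift estimator into a bound on $\|\cdot\|_n$ costs (essentially) a square-root factor in $N$.

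I would start by applying Theorem~\ref{thm:comparisonInequality} on the event $\{\min_i N_i > 1\}$. Since $\sigma^{*}\equiv 1$ is known so that $\widehat{\sigma}^2 = 1 = \sigma_0^2$, and the truncated estimators satisfy $|\widehat{b}_i| \leq A_{N_i}\log^{1/2}(N) \lesssim \log(N)$ deterministically on this event, together with $\Delta_n = O(1/N)$, the inequality reduces the excess risk to
\[
\mathbb{E}\bigl[\mathcal{R}(\widehat{g})-\mathcal{R}(g^*)\bigr] \;\lesssim\; N^{-1/2} \;+\; \log(N)\sum_{i=1}^K \mathbb{E}\bigl[\|\widehat{b}_i-b_i^*\|_n\bigr].
\]
The complementary event $\{\min_i N_i \leq 1\}$ has probability exponentially small in $N$ (binomial tail), while the excess risk is at most $1$, so it contributes $O(e^{-cN})$ and can be discarded.

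The heart of the argument is the control of $\|\widehat{b}_i - b_i^*\|_n^2 = \sum_{j} \mfp_j^*\,\|\widehat{b}_i - b_i^*\|_{n,j}^2$. For $j = i$, Theorem~\ref{thm:LossErrorDrift} gives $\log^{6\beta}(N)\,N^{-2\beta/(2\beta+1)}$, after absorbing the localization error $\|b_i^* - b^*_{A_{N_i},i}\|_{n,i}^2 \lesssim \exp(-4A_{N_i}^2) = O(N^{-c})$ via the exponential moment~\eqref{eq:eqExpMoment} and the Lipschitz growth of $b_i^*$. For $j \neq i$, I set $f = \widehat{b}_i - b_i^*$ and introduce the Radon--Nikodym derivative $L_{ij}$ between the laws of $X$ under $Y=j$ and $Y=i$, which is given explicitly by Girsanov since $\sigma^*\equiv 1$. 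Hölder's inequality with conjugate exponents $(4/3,4)$ yields
\[
\mathbb{E}_{X|Y=j}\!\bigl[f^2(X_{k\Delta_n})\bigr] \;\leq\; \bigl(\mathbb{E}_{X|Y=i}[f^{8/3}(X_{k\Delta_n})]\bigr)^{3/4}\bigl(\mathbb{E}_{X|Y=i}[L_{ij}^4]\bigr)^{1/4}.
\]
Writing $L_{ij}^4$ as the product of an exponential martingale of mean $\leq 1$ and $\exp\!\bigl(6\int_0^1 (b_j^*-b_i^*)^2(X_s)\,ds\bigr)$ and using Lipschitz continuity to dominate $(b_j^*-b_i^*)^2(x) \leq C(1+x^2)$, the $L^4$ factor is $O(1)$ once a uniform-in-time exponential moment of $\sup_{s\leq 1}|X_s|$ is in hand. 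For the $L^{8/3}$ factor, splitting on $\{|X_{k\Delta_n}|\leq A_{N_i}\}$ and using $f^{8/3} \leq \|f\|_\infty^{2/3} f^2$ with $\|f\|_\infty \lesssim \log(N)$ on this set (since $|\widehat{b}_i| \lesssim \log(N)$ and $|b_i^*(x)| \lesssim 1+|x|$ is of order $A_{N_i}$), plus a tail term handled by the exponential moment, yields $\mathbb{E}_{X|Y=i}[f^{8/3}] \lesssim \log^{2/3}(N)\,\|f\|_{n,i}^2$. Assembling the factors gives $\|f\|_{n,j}^2 \lesssim \log^{1/2}(N)\,\|f\|_{n,i}^{3/2}$, so $\|f\|_n \lesssim \log^{O(\beta)}(N)\,N^{-3\beta/(4(2\beta+1))}$; plugging back into Step~1 and tracking logarithms produces the announced $\log^{3\beta+1}(N)$ factor.

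The principal obstacle is upgrading the pointwise bound~\eqref{eq:eqExpMoment} to the uniform-in-time exponential moment $\mathbb{E}_{X|Y=i}[\exp(c\sup_{s\leq 1}|X_s|^2)] < \infty$ that makes $\mathbb{E}_{X|Y=i}[L_{ij}^4]$ finite. I would obtain it by applying Itô's formula to $x \mapsto \exp(cx^2)$ for $c$ small, so that the drift term $2cx\,b_i^*(x)$ is dominated by $-2c\,c_0\,x^2$ for $|x|$ large by re-entrance, then combining a Gronwall estimate with Doob's maximal inequality. The quantitative threshold $c_0 > 4$ in Assumption~\ref{ass:Re-entrant.Drift} is exactly what permits the choice $p = 4$ in the Hölder step, thereby fixing the exponent $3/4$ in the final rate; a larger $c_0$ would allow larger $p$ and recover a better exponent.
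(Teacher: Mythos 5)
Your route diverges from the paper's at the crucial step of converting the $\|\cdot\|_{n,i}$ bound into a $\|\cdot\|_n$ bound. The paper does \emph{not} change measure via Girsanov and H\"older. Instead it exploits the explicit Gaussian-type bounds of Lemma~\ref{lm:DensityConstSigma} (available because $\sigma^*\equiv 1$) to control the ratio of the time-averaged transition densities $f_{n,Y}(x)/f_{n,i}(x)$ directly on the interval $[-A_{N_i},A_{N_i}]$: the lower bound $f_{n,i}(x)\gtrsim \log^{-1}(N)\exp(-\tfrac{2}{3}A_{N_i}^2)$ (from $q=3/2$ in Lemma~\ref{lm:DensityConstSigma}) and the upper bound $f_{n,Y}\lesssim 1$ (Lemma~\ref{lem:boundDensity}) yield a multiplicative penalty of order $\log(N)\exp(\tfrac{2}{3}A_{N_i}^2)$, while the Markov bound $\P(|X_t|>A_{N_i})\lesssim\exp(-4A_{N_i}^2)$ from the re-entrance hypothesis handles the tail. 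Balancing these two effects against the rate $N^{-2\beta/(2\beta+1)}$ of Theorem~\ref{thm:LossErrorDrift} is what produces the exponent $3\beta/(4(2\beta+1))$.

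The obstacle you flagged is, in fact, a gap you do not resolve. To obtain $\mathbb{E}_{X|Y=i}[L_{ij}^4]=O(1)$ you must pull an exponential (local) martingale off $L_{ij}^4=\mathcal{E}(4M)\exp(6\langle M\rangle_1)$, which cannot be done by a single "mean $\leq 1$" observation (the two factors are not independent): you need H\"older or Cauchy--Schwarz, and that inflates the required exponential moment of $\langle M\rangle_1=\int_0^1(b^*_j-b^*_i)^2(X_s)\,ds$ well beyond a coefficient of $6$. Since the only pointwise growth bound is $(b^*_j-b^*_i)^2(x)\leq CL_0^2(1+x^2)$, the needed moment is $\mathbb{E}\bigl[\exp\bigl(c\,L_0^2\sup_{s\leq 1}|X_s|^2\bigr)\bigr]<\infty$ for a constant $c$ of order at least $10$--$30$ (depending on the H\"older exponent chosen). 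Assumption~\ref{ass:Re-entrant.Drift} only guarantees $\mathbb{E}[\exp(4|X_t|^2)]\leq C$ pointwise in $t$, and there is no reason for $4$ to dominate $cL_0^2$: the Lipschitz constant $L_0$ and the re-entrance constant $c_0$ are unrelated parameters. So your closing remark that "$c_0>4$ is exactly what permits the choice $p=4$ in the H\"older step" conflates the constant controlling the tail of $X_t$ with the constant controlling the moments of the Radon--Nikodym derivative; they play different roles, and the hypothesis as stated does not deliver the second. (The further upgrade from pointwise to uniform-in-time exponential moments is an additional, unestablished ingredient.) Unless you either strengthen the hypotheses so that $c_0$ dominates the relevant multiple of $L_0^2$, or replace the change-of-measure step by the paper's direct transition-density comparison, your argument does not close.
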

The above theorem shows that the rate of convergence
of the plug-in classifier is, up to a logarithmic factor,
of order $N^{-3\beta/4(2\beta+1)}$. Therefore, this rate of convergence is slightly slower than the one provided in Theorem~\ref{thm:AlmostOptimalRate}.
It is mainly due to the fact that under Assumption~\ref{ass:Re-entrant.Drift}, Proposition~\ref{prop:eqNorm} does not apply and then, in view of considered assumptions in Theorem~\ref{thm:RateClass.Re-entrant.Drift},  we only manage to obtain the following bound, 
\begin{equation*}
    \forall i,j\in\cY: \ i\neq j, \ \textcolor{black}{\E\left[\left\|\w{b}_i - b^{*}_{i}\right\|^{2}_{n,j}\right] \leq C N^{\beta/ 4(2\beta+1)}\E\left[\left\|\w{b}_i - b^{*}_{i}\right\|^{2}_{n,i}\right]},
\end{equation*}
which is clearly worse than the one obtain\textcolor{black}{ed} in Proposition~\ref{prop:eqNorm}.
Interestingly, for $\beta = 1$, we can note that the rates obtained in Theorem~\ref{thm:RateClass.Re-entrant.Drift} are of the same order as the rates of convergence established in~\cite{Gadat_Klein_Marteau16} in the classification setup where the input vector lies in $\R$ under the assumption 
that $X$ does not fulfil the strong density assumption ({\it e.g.} the density of $X$ is not lower bounded).




\section{Simulation study}
\label{sec:NumStudy}

This section is devoted to numerical experiments that support our theoretical findings. A first part is dedicated to the study of the performance of the plug-in classifier in a setting which meets the assumptions of Section~\ref{subsec:genRate}. The considered model is presented in Section~\ref{subsec:DiffModel}.
The implementation of the proposed procedure is discussed in Section~\ref{subsec:ImplClass} while the performances of the plug-in classifier are given in Section~\ref{subsec:results}.
Finally, several features of the problem are investigated in Section~\ref{subsec:influenceOfsigma}. In particular,
we consider the classical Ornstein-Uhlenbeck model, for which assumptions of Section~\ref{subsec:genRate} are not fulfilled.

\subsection{Models and simulation setting}
\label{subsec:DiffModel}

We fix $K = 3$ classes in the following. Note that, we do not consider larger value of $K$ since the evaluation of the impact of $K$ on the procedure is beyond the scope of this paper. To illustrate the accuracy of the presented plug-in classifier, 
we investigate the model described in Table~\ref{tab:DiffModels}.
\begin{table}[!ht]
\begin{center}
\renewcommand{\arraystretch}{1.25}
\begin{tabular}{c|c}
    \hline 
      $b^{*}_{1}(x)$ & $1/4+(3/4)\cos^2 x$ \\ 
    
     $b^{*}_{2}(x)$ & $\theta[1/4+(3/4)\cos^2 x]$ \\ 
    
     $b^{*}_{3}(x)$ & $-\theta[1/4+(3/4)\cos^2 x]$ \\ 
    
     $\sigma^{*}(x)$ & $0.1+0.9/\sqrt{1+x^2}$ \\
    \hline
\end{tabular}
\caption{{\small \textit{Drift and diffusion coefficients, depending on $\theta\in\Theta=\{1/2,3/4,(4+\alpha)/4, \alpha\in[\![1,12]\!]$.}}}\label{tab:DiffModels}
\end{center}
\end{table}
This toy model, described in Table~\ref{tab:DiffModels}, fulfills the assumptions of Section~\ref{subsec:genRate}. Interestingly, this model allows evaluating the influence of the distance between the drift functions of each of the three classes, on the classification problem, through the parameter $\theta$.
Indeed, 
$$\underset{i,j=1,2,3}{\min}{\|b^{*}_{i}-b^{*}_{j}\|_{\infty}}=\theta, \ \ \mathrm{where} \ \ \theta\in\Theta=\{1/2,3/4,(4+\alpha)/4, \alpha\in[\![1,12]\!]\}.$$
We investigate the consistency of the empirical classifier using learning samples of size $N\in\{100,1000\}$ with $n \in \{100, 500\}$ (and thus with $\Delta_n=1/n$).
We use the \texttt{R}-package \texttt{sde}  \citep[see][]{iacus2009simulation} to simulate the solution of the stochastic differential equation corresponding to the chosen model.

Figure~\ref{fig:path-model5} displays simulated trajectories from the proposed model. On the left panel (right panel respectively)  the observed learning sample comes from the model with parameter $\theta=1/2$ ($\theta = 4$ respectively) and each class is represented by one color. We can see from Figure~\ref{fig:path-model5} that the distance between the drift functions strongly impacts the dispersion of the trajectories and leads to a more difficult classification task. 
\begin{figure}[hbtp]
\begin{minipage}[t]{.45\textwidth}
\raggedright
  \centering
  \includegraphics[width=0.9\linewidth, height=0.2\textheight]{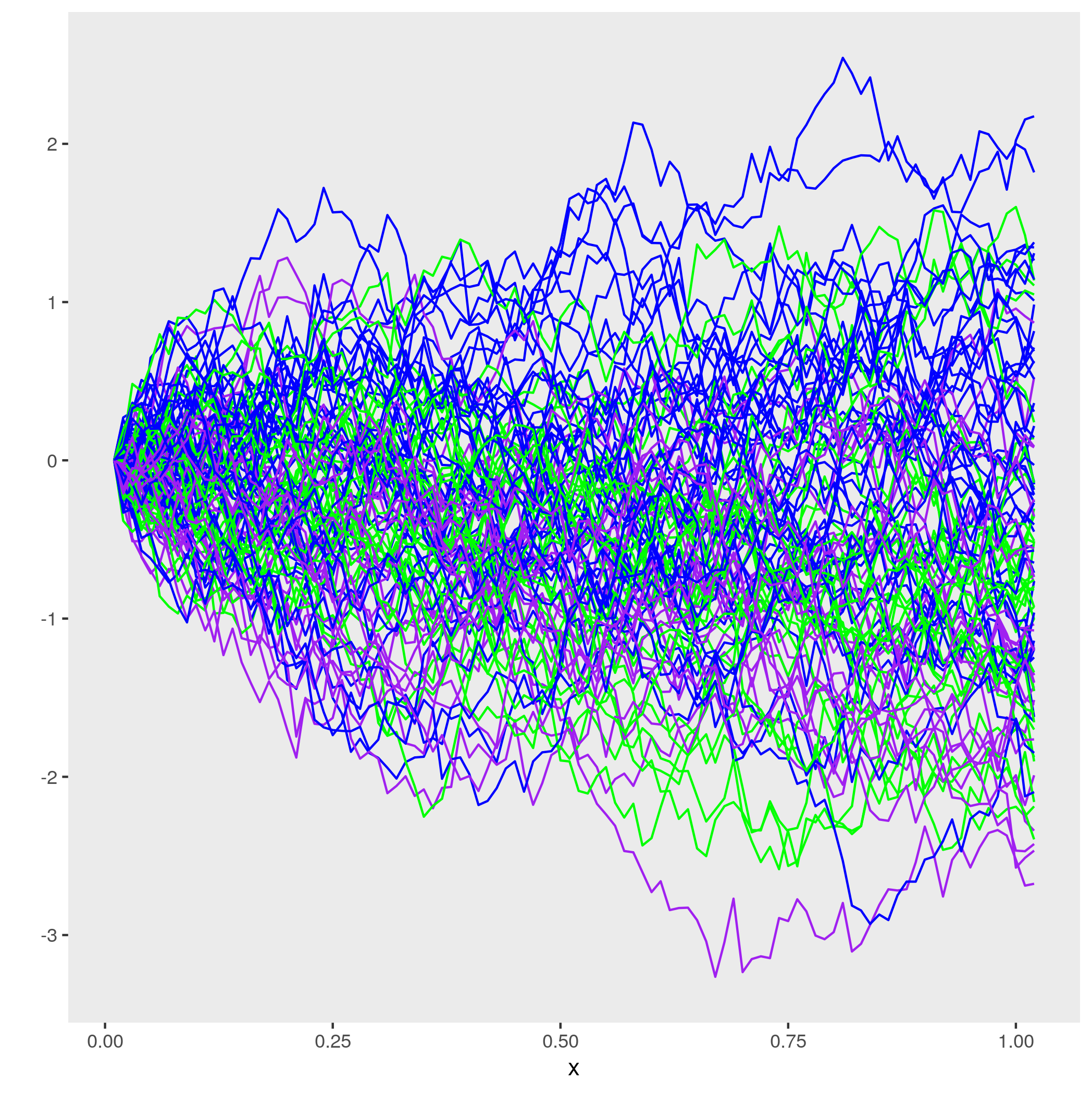}
\end{minipage}
\hfill
\noindent
\begin{minipage}[t]{.55\textwidth}
\raggedleft
  \centering
  \includegraphics[width=0.8\linewidth, height=0.2\textheight]{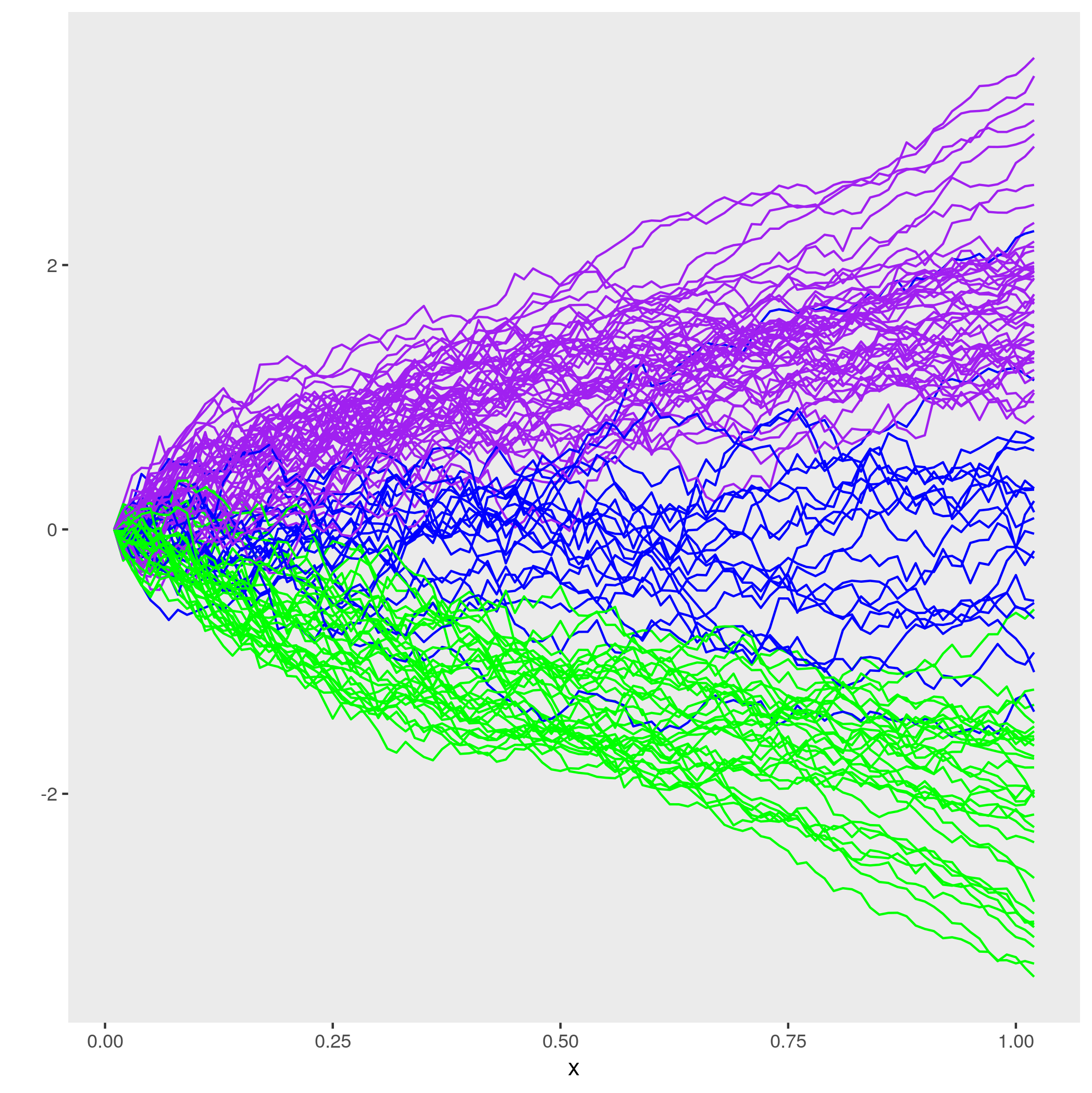}
\end{minipage}
\caption{{\small \textit{Dispersion of diffusion paths from model given in \textcolor{black}{Table}~\ref{tab:DiffModels}. Left: $\theta=1/2$, right: $\theta=4$ (blue lines $K=1$, purple lines $K=2$, green lines $K=3$); with $N=100$ and $n=100$.}}}
\label{fig:path-model5}
\end{figure}

\paragraph{Performance of the Bayes classifier.}
We evaluate the performance of the Bayes classifier $g^{*}$ with respect to 
four values of parameter $\theta$ ($\theta  \in \{1/2, 3/2, 5/2, 4\}$). 
To this end, we compute its average error rate 
over $100$ repetitions of the following steps
\begin{enumerate}[label=(\roman*)]
\item simulate $\mathcal{D}_M$ of size $M =4000$ with $n=500$; 
\item based on $\mathcal{D}_M$ compute the misclassification error rate
of the discrete counterpart of $g^{*}$.
\end{enumerate}
Table~\ref{tab:RiskBayes} provides the mean and standard deviation of the misclassification risk. 
The obtained results highlight the significant impact of the minimum distance $\theta$, between the drift functions of each class, on the performance of $g^{*}$. Indeed, as expected, the Bayes classifier is more accurate on our model when parameter $\theta$ is large, especially in the case of separable data ($\theta=4$). On the contrary, the worst case corresponds to $\theta = 0.5$. In this model, the data are highly ambiguous. 
\begin{table}[hbtp]
\centering
\renewcommand{\arraystretch}{1.5}
\begin{tabular}{c|c|c|c|c}
\hline
 & $\theta = 1/2$ & $ \theta = 3/2$ & $ \theta = 5/2$ & $\theta = 4$ \\
\hline
 $\w{\mathcal{R}}(g^{*})$ & 0.49 \ (0.01) & 0.36 \ (0.01)  & 0.22 \ (0.01) &  0.11 \ (0.01)    \\
\hline
\end{tabular}
\caption{{\small \textit{Classification risks of the Bayes classifier $g^{*}$ w.r.t parameter $\theta$ from learning samples of size $N=4000$ with $n=500$.}}}
\label{tab:RiskBayes}
\end{table}

\subsection{Implementation of the plug-in classifier}
\label{subsec:ImplClass}
Hereafter, we briefly describe the implementation of the proposed plug-in
classifier. We first estimate the drift functions $b_i^*$, $i =1,2,3$.
For each $i \in \{1,2,3\}$, the estimator $\w{b}_i$ is built on the interval
$[-A_{N_i},A_{N_i}]$. Since the drifts (and the diffusion) coefficients are bounded, we can use the construction considered in Section~\ref{sec:classProc}. Therefore, we fix $A_{N_i} =\log(N)$, $M=3$, and divide the learning sample $\mathcal{D}_N$ into sub-samples $\mathcal{D}^{i}_{N}$ of size $N_i$ that contains all diffusion paths belonging to the class $i$. From the sub-sample $\mathcal{D}_N^i$, we build estimators $\w{b}_i$, $i=1,2,3$.

For the construction of the estimator $\w{b}_i$, we  have to choose the dimension parameter $K_{N_i}$. We follow~\cite{denis2020ridge}, and consider an adaptive choice denoted by $\w{K}_{N_i}$. 

Let us remind the reader that in~\cite{denis2020ridge}, the adaptive dimension $\w{K}_{N_i}$  is selected such that $\w{K}_{N_i}$ is the minimizer of the following penalized contrast
\begin{equation}
\label{eq:SelectDimDrift}
\w{K}_{N_i}:={\argmin{K \in\mathcal{K}}}{\left\{\frac{1}{Nn}\sum_{j=1}^{N}{\sum_{k=0}^{n-1}{(\w{b}_{i,K}-Z^{j}_{k\Delta_n})^2}}+\mathrm{pen}_b(K)\right\}},
\end{equation}
where $\mathcal{K} \in \left\{2^q, \ q\in[\![0,5]\!]\right\}$, and  $\w{b}_{i,K}$ is the drift estimator built on the approximation subspace $\cS_{K,M}$. Besides, $\mathrm{pen}_b(K)=\kappa(K+M)\log^{3}(N)/N$ is the penalty function with $\kappa>0$. We fix the parameter $\kappa=0.1$ as recommended in~\cite{denis2020ridge}.
%

For the estimation of $\sigma^2$, we consider the whole sample $\mathcal{D}_N$ and apply the methodology described in Section~\ref{sec:classProc} with $M=3$. 
We follow the same lines to build an adaptive estimator of $\sigma^{2*}$,
and choose $\w{K}_N$ as the minimizer over $\mathcal{K}$ of the following penalized contrast
\begin{equation}
    \label{eq:SelectDimSigma}
\w{K}_N:={\argmin{K\in\mathcal{K}}}{\left\{\frac{1}{Nn}\sum_{j=1}^{N}{\sum_{k=0}^{n-1}{(\w{\sigma}^{2}_{K}-U^{j}_{k\Delta_n})^2}}+\mathrm{pen}_{\sigma}(K)\right\}},
\end{equation}
where $\hat{\sigma}^2_K$ is the estimator built on $\mathcal{S}_{K,M}$, and $\mathrm{pen}_{\sigma}(K):=\kappa(K+M)\log^{3}(N)/Nn$ is the penalty function, with $\kappa>0$.
The value of the tuning parameter $\kappa$ is calibrated through an intensive simulation study and chosen equal to $\kappa=5$.

\textcolor{black}{The function \texttt{SDEclassif} of the \texttt{R}-package \href{https://github.com/Eddymichelella/SDEclassif.git}{SDEclassif}, available on github, implements the resulting plug-in  classifier.}

\subsection{Simulation results}
\label{subsec:results}

The performance of the plug-in classifier $\w{g}$ is evaluated by repeating $100$ times the following steps
\begin{enumerate}
\item Simulate learning samples $\mathcal{D}_{N}$ and $\mathcal{D}_{N^{\prime}}$ with $N\in\{100,1000\}, \ N^{\prime}=1000$, and $n\in\{100,500\}$;
\item for each  $i\in\{1, 2 ,3\}$, from the sub-sample $D^{i}_{N}=\{\bar{X}^{j}, j\in\mathcal{I}_i\}$, select $\w{K}_N$ minimizing  \eqref{eq:SelectDimDrift} and compute the estimator $\w{b}_{i,\w{K}_N}$ of $b^{*}_{i}$ given in Equation~\eqref{eq:boundedbi};
\item from $\mathcal{D}_N$ select $\w{K}_N$ using Equation \eqref{eq:SelectDimSigma} and compute the estimator $\w{\sigma}^{2}_{\w{K}_N}$ of $\sigma^{*2}$ given in \eqref{eq:boundedsigma};
\item based on $\mathcal{D}_N$ compute $\w{\mfp}=\left(\frac{1}{N}\sum_{j=1}^{N}{\one_{Y_j=1}}, \frac{1}{N}\sum_{j=1}^{N}{\one_{Y_j=2}} ,\frac{1}{N}\sum_{j=1}^{N}{\one_{Y_j=3}}\right)$;
\item based on $\mathcal{D}_{N^{\prime}}$, compute the error rate of
the plug-in classifier $\w{g}$ where $\w{\mathbf{b}}=\left(\w{b}_{1,\w{K}_N}, \w{b}_{2,\w{K}_N} ,\w{b}_{3,\w{K}_N}\right)$ and $\w{\sigma}^{2}=\w{\sigma}^{2}_{\w{K}_N}$, and $\w{\mfp}$.
\end{enumerate}
From these repetitions, we compute the empirical mean and standard deviation of the error rate of $\w{g}$.
The results are given in Table~\ref{tab:RiskModel1} and Figure~\ref{fig:plugin}. 
As expected, from Table~\ref{tab:RiskModel1} and Table~\ref{tab:RiskBayes}, we can see that the error rate of the plug-in classifier $\w{g}$ is closed to the error rate of the Bayes classifier. In particular, for $N=1000$, it performs as well as the Bayes classifier. Note that the length of the paths $n$ does not significantly impact the performance of $\w{g}$.
Moreover, from Figure~\ref{fig:plugin}, we can make similar comments as for the Bayes classifier (see Table~\ref{tab:RiskBayes}), in particular, the accuracy of $\w{g}$ decreases as parameter $\theta$ increases.
\begin{table}[hbtp]
\centering
\renewcommand{\arraystretch}{1.5}
\begin{tabular}{c|c|c|c|c}
\hline
\multirow{2}{*}{$\w{\mathcal{R}}(\w{g})$} & \multicolumn{2}{c}{$n=100$} & \multicolumn{2}{c}{$n=500$} \\
& $N=100$ & $N=1000$ & $N=100$ & $N=1000$\\
\hline
  $\theta =1/2$ & 0.53 \ (0.05) & 0.50 \ (0.05)  & 0.53 \ (0.05) &  0.49 \ (0.05)    \\
$ \theta =3/2$ & 0.39 \ (0.06) & 0.37 \ (0.05) & 0.39 \ (0.05) & 0.36 \ (0.05)  \\            
$ \theta =5/2$ & 0.24 \ (0.05) & 0.22 \ (0.04) & 0.25 \ (0.04) & 0.22 \ (0.04)  \\
$\theta = 4$ & 0.12 \ (0.03) & 0.10 \ (0.03) & 0.11 \ (0.03) & 0.10 \ (0.03)  \\
\hline
\end{tabular}
\caption{{\small \textit{Risks of the plug-in classifier $\w{g}$ w.r.t. values of parameter $\theta$}}}
\label{tab:RiskModel1}
\end{table}
\begin{figure}[hbtp]
    \centering
\includegraphics[width=0.7\linewidth, height=0.3\textheight]{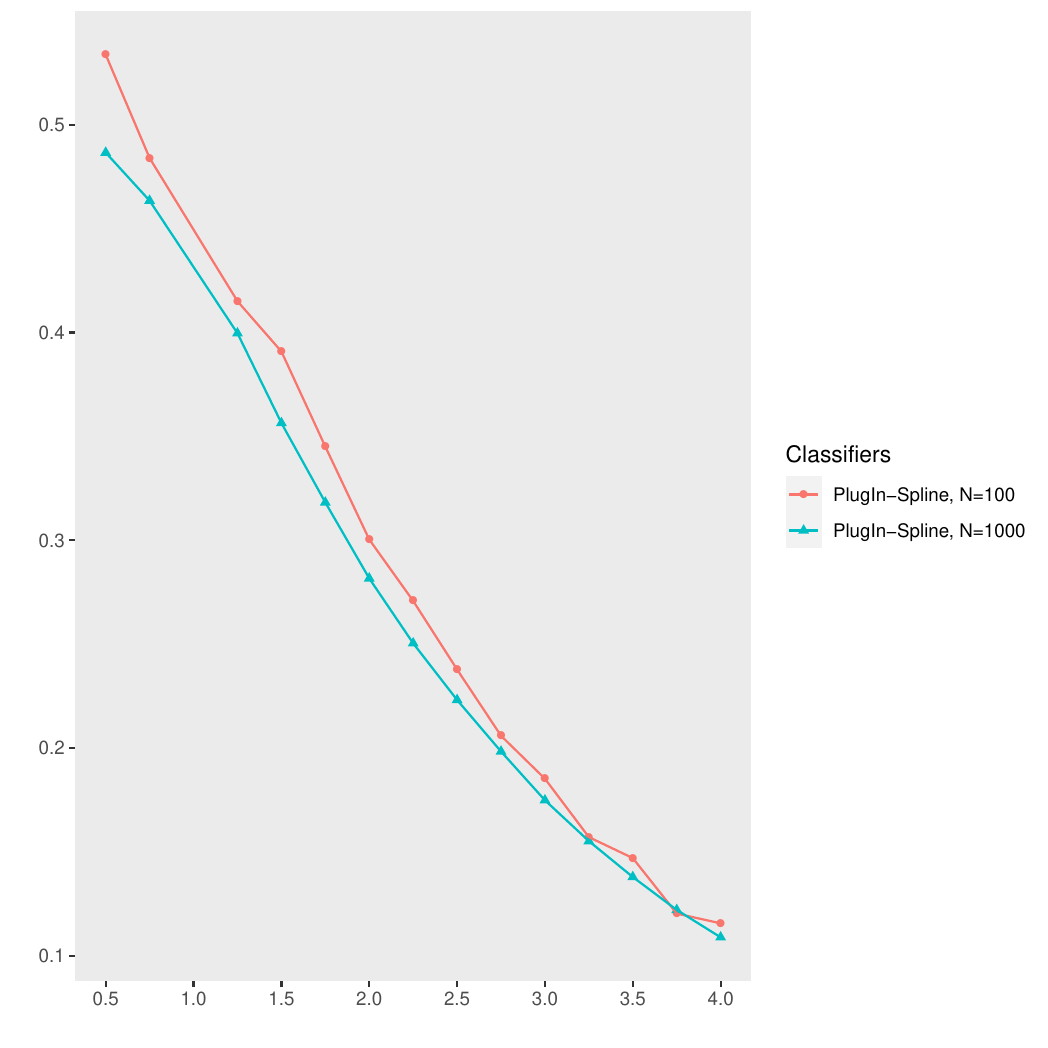}
    \caption{{\small \textit{Risks of the plug-in classifier w.r.t values of the minimum gap $\theta$ between the drift functions}}}
    \label{fig:plugin}
\end{figure}

\subsection{Ornstein-Uhlenbeck model}
\label{subsec:influenceOfsigma}

In this section, we focus on the influence of the diffusion coefficient $\sigma^{*}$ on the performance of our plug-in procedure. To this end, we consider the Ornstein-Uhlenbeck diffusion model given in Table~\ref{tab:OU} where the diffusion coefficient $\sigma^*$ is constant. Let us notice also that in this model the drift functions are unbounded. 
\begin{table}[hbtp]
\begin{center}
\renewcommand{\arraystretch}{1.25}
\begin{tabular}{c|c}
    \hline 
 $b^{*}_{1}(x)$ & $1-x$ \\ 
 $b^{*}_{2}(x)$ & $-1-x$ \\ 
$b^{*}_{3}(x)$ & $-x$\\
$\sigma^{*}(x)$ & $\sigma$ \\
\hline
\end{tabular}
\caption{{\small \textit{Ornstein-Uhlenbeck mixture model with $K=3$}}}
\end{center}
\label{tab:OU}
\end{table}
We investigate the performance of the plug-in classifier $\w{g}$ \emph{w.r.t.} the level of noise $\sigma^*$. This study is motivated by the fact that, inherently, the diffusion coefficient impacts the dispersion of the trajectories. Therefore, it can lead to separable data when $\sigma^*$ is close to zero, and ambiguous data for large values of $\sigma^*$. Thus, we evaluate the performance of $\w{g}$ for $\sigma^*=1/2$ which is close enough to zero, and for larger value $\sigma^* \in\{1,3/2\}$.  
We first consider the case where $\sigma^*$ is unknown. The results are given in Table~\ref{tab:Risk-OU} and confirm our intuition. The error rate of the plug-in classifiers decreases as $\sigma^*$ decreases.

In a second step, we investigate the influence of estimating the coefficient $\sigma^*$ in the procedure. To evaluate this point, we assess the error rate of the plug-in classifier when $\sigma^{*}=1$ is known. In this case, we only estimate the drift functions and the weights of mixture $\mfp{p}^*$ to build our predictor. The results are given in Table~\ref{tab:Risk-OU2}. First, we can notice that by comparison with results provided in Table~\ref{tab:Risk-OU}, there is almost no impact on the performance of the plug-in classifier when we assume the diffusion coefficient $\sigma^{*}$ in the Ornstein-Uhlenbeck model to be known or not.

Finally, we also study the influence of parameter $A_N$ on the estimation procedure. Indeed, our theoretical results indicates that $A_N$ should be of order 
$\sqrt{\log(N)}$ when $\sigma^*$ is constant and known, while  $A_N=\log(N)$
is recommended when $\sigma^*$ is unknown. To this end, we evaluate the error rate of our procedure for these choices. The results are also provided in Table~\ref{tab:Risk-OU2} and show that the performance are almost the same in the two cases.

\begin{table}[hbtp]
\centering
\renewcommand{\arraystretch}{1.5}
\begin{tabular}{l|c|c}
\hline
& $\w{\cR}(\w{g})$ & $\w{\cR}(g^{*})$ \\
\hline
\multirow{1}{*}{  $\sigma^*=1/2$} & 0.23 (0.04) & 0.21 (0.01)  \\
\multirow{1}{*}{$\sigma^*=1$} & 0.44 (0.05) & 0.41 (0.01) \\
\multirow{1}{*}{$\sigma^*=3/2$} & 0.52 (0.05) & 0.49 (0.01) \\
\hline
\end{tabular}
\caption{{\small \textit{Evolution of the performance of the plug-in classifier $\w{g}$ and of $g^*$ w.r.t values of the constant diffusion coefficient $\sigma^{*}$ for $N=100$ and $n=100$. }}}
\label{tab:Risk-OU}
\end{table}
\begin{table}[hbtp]
\centering
\renewcommand{\arraystretch}{1.5}
\begin{tabular}{c|c|c}
\hline
& $N=100$ & $N=1000$\\
\hline
$A_N=\sqrt{\log(N)}$ & 0.44 (0.05) & 0.41 (0.05)  \\
 $A_N=\log(N)$ & 0.43 (0.05) & 0.43 (0.05) \\
\hline
\end{tabular}
\caption{{\small \textit{Risk classification of $\w{g}$ when the diffusion $\sigma^{*}=1$ is known, and $n=100$.}}}
\label{tab:Risk-OU2}
\end{table}

\section{Conclusion and discussion}
\label{sec:conclusion}
In this paper, we propose a plug-in classifier for the multiclass classification of trajectories generated by a mixture of diffusion processes whose drift functions $b^{*}_{i}, \ i\in\cY$ and diffusion coefficient $\sigma^{*}$ are assumed to be unknown.
In the considered model, each class $i$ is characterized by a drift function, 
$b^{*}_{i}$ whereas the diffusion coefficient $\sigma^{*}$ is common for all classes. This work extends to the nonparametric case, the multiclass classification procedure provided in~\cite{denis2020classif} where $\sigma^{*}=1$ and the drift functions depend on an unknown parameter $\theta \in \R^d$.
Our proposed procedure relies on consistent projection estimators $\w{b}_i, i \in \cY$ and $\w{\sigma}^{2}$ of the drift and diffusion coefficients on a constrained approximation subspace spanned by the spline basis. We establish the consistency, \emph{w.r.t.} the excess risk, of our procedure and then studied its rate of convergence under different kind of assumptions.
In particular, we show that the proposed plug-in classifier reaches a rate of convergence of order $N^{-1/5}$ (up to a factor of order $\exp(\sqrt{c\log(N)})$) when ${\bf b}^{*}, \ \sigma^{*}$, and $\mfp^{*}$ are unknown.
Besides, a numerical study illustrates the performance of our classification procedure.

In the case where $\sigma^{*}=1$, we manage to derive faster rates of convergence. In particular, when the drift functions are bounded and H\"older with regularity $\beta \geq 1$, we obtained a rate of order $N^{-\beta/(2\beta+1)}$ (up to a factor of order $\exp(\sqrt{c\log(N)})$).
Interestingly, this result can be viewed as an extension of the one obtained in~\cite{gadat2020optimal} to the multiclass mixture model, where the drift 
functions are time-dependent. Furthermore, up to $\exp(\sqrt{c\log(N)})$ factor, our rate of convergence matches the optimal rates of convergence 
obtained in the univariate setting (\emph{e.g.} $X \in \R$), in~\cite{audibert2007fast}.
Finally, for the case of unbounded drift functions, we assume that the drift functions are the re-entrant. Taking advantage of this property, we establish that our plug-in classifier achieves a rate of convergence of order $N^{-3\beta/4(2\beta+1)}$. For $\beta =1$, this rate of convergence is of the same order as the one obtained in~\cite{Gadat_Klein_Marteau16} for plug-in classifier in the univariate classification setting, when the feature $X$ does not satisfy the strong density assumption.

A question that can be tackled for future research is the study of the optimality in the minimax sense of our plug-in procedure. In particular, the adaptivity of estimators of the drift and diffusion coefficients should be investigated. Furthermore, it might be interesting to consider the margin type assumption as in~\cite{gadat2020optimal} to derive faster rates of convergence. 
Also, following~\cite{denis2020classif}, it is natural to derive theoretical properties for empirical risk minimization procedure based on convex losses.
Finally, the extension to the  high-dimensional setting would require further work. In particular, the control of the transition densities is different in this setting.

\section{Proofs}\label{sec:proofs}
The section is devoted to the proofs of our main results.
In order to simplify the notation, we write $\Delta_n= \Delta$. 
Besides, $C>0$ is a constant which may change from one line to another. When the dependency on a parameter $\theta$ needs to be highlighted, we write $C_\theta$.

\subsection{Technical results on the process $X$}

\begin{lemme}
\label{lem:discrete}
\textcolor{black}{Under Assumption~\ref{ass:RegEll} and for} all integer $q\geq 1$, there exists $C^{*}>0$ depending on $q$ such that for all $0\leq s<t\leq 1$,
\begin{equation*}
\E\left|X_t-X_s\right|^{2q}\leq C^{*}(t-s)^{q}.
\end{equation*}
\end{lemme}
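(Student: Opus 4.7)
The plan is to use the SDE representation of $X_t-X_s$ together with the usual moment bounds (Hölder for the drift integral, Burkholder--Davis--Gundy for the stochastic integral), relying on Assumption~\ref{ass:RegEll} to control $b^*_Y$ and $\sigma^*$ and on the already-stated a priori bound $\mathbb{E}[\sup_{u\in[0,1]}|X_u|^{2q}]\leq C_{2q}$.

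First I would write, for $0\leq s<t\leq 1$,
\begin{equation*}
X_t - X_s = \int_s^t b^*_Y(X_u)\,du + \int_s^t \sigma^*(X_u)\,dW_u,
\end{equation*}
and apply the elementary inequality $|a+b|^{2q}\leq 2^{2q-1}(|a|^{2q}+|b|^{2q})$ to split the estimate into a drift contribution and a diffusion contribution. For the drift, Jensen (or Hölder with exponents $2q$ and $2q/(2q-1)$) yields
\begin{equation*}
\Big|\int_s^t b^*_Y(X_u)\,du\Big|^{2q}\leq (t-s)^{2q-1}\int_s^t |b^*_Y(X_u)|^{2q}\,du.
\end{equation*}
The Lipschitz property of each $b^*_i$ (Assumption~\ref{ass:RegEll}(i)) combined with the fact that $Y$ ranges over the finite set $\mathcal{Y}$ gives the pointwise bound $|b^*_Y(x)|\leq L_0|x|+B_0$, where $B_0:=\max_{i\in\mathcal{Y}}|b^*_i(0)|$ is a finite constant. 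Taking expectation and using $\mathbb{E}[\sup_{u\in[0,1]}|X_u|^{2q}]\leq C_{2q}$ then bounds the drift contribution by $C(t-s)^{2q}\leq C(t-s)^q$, since $t-s\leq 1$.

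For the stochastic integral, I would apply the BDG inequality, which gives
\begin{equation*}
\mathbb{E}\Big|\int_s^t \sigma^*(X_u)\,dW_u\Big|^{2q}\leq C_q\,\mathbb{E}\Big[\Big(\int_s^t (\sigma^*(X_u))^2\,du\Big)^{q}\Big],
\end{equation*}
and then use the uniform upper bound $\sigma^*(x)\leq \sigma^*_1$ from Assumption~\ref{ass:RegEll}(ii) to get
\begin{equation*}
\mathbb{E}\Big|\int_s^t \sigma^*(X_u)\,dW_u\Big|^{2q}\leq C_q (\sigma^*_1)^{2q}(t-s)^q.
\end{equation*}
Summing the two contributions yields the desired inequality with a constant $C^*$ depending only on $q$, $L_0$, $B_0$, $\sigma^*_1$ and $C_{2q}$.

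There is no real obstacle here: the only point that deserves a line of care is that the drift is random through the label $Y$, but since $\mathcal{Y}$ is finite this dependence is absorbed into the uniform constant $B_0=\max_i|b^*_i(0)|$, so all the estimates proceed exactly as in the standard (non-mixture) case. One could alternatively derive the statement by conditioning on $Y=i$ and taking a maximum over $i\in\mathcal{Y}$; both routes give the same conclusion.
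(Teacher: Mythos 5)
Your proof is correct and follows essentially the same route as the paper's own argument: the same decomposition into drift and stochastic-integral contributions via $|a+b|^{2q}\leq 2^{2q-1}(|a|^{2q}+|b|^{2q})$, Jensen's (or H\"older's) inequality for the drift term, the Lipschitz bound on $b^*_Y$ together with the a priori moment bound, and the Burkholder--Davis--Gundy inequality combined with $\sigma^*\leq\sigma^*_1$ for the martingale term. Your extra remark about the finite label set absorbing the randomness of $Y$ into a uniform constant is a useful clarification but does not change the substance of the argument.
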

\textcolor{black}{The proof of Lemma~\ref{lem:discrete} is provided in Appendix.} 

For each $t \in [0,1]$ and $x \in \mathbb{R}$,
we denote by \textcolor{black}{$p_X(t,x)$} the transition density of the underlying process $X_t$ given the starting point $X_0=0$. We also denote by \textcolor{black}{$p_{i,X}(t,\cdot)$} the transition density of the process driven by the drift function $b_i^*$. Note that Assumption~\ref{ass:RegEll} ensures the existence of the transition densities. The rest of this section is dedicated to some results on the transition densities \textcolor{black}{$p_{i,X}$} for $i = 1, \ldots, K$. Nonetheless, since the transition \textcolor{black}{$p_X$}  of the process $X$ writes as
\begin{equation*}
\textcolor{black}{p_X = \sum_{i = 1}^K \mfp^{*}_i p_{i,X}},
\end{equation*}
all these results apply also for \textcolor{black}{$p_X$}.
The following proposition is provided in~\citep{gobet2002lan} (Proposition 1.2).
\begin{prop}
\label{prop:densityTransition}
Under Assumptions~\ref{ass:RegEll} and~\ref{ass:Novikov}, there exist constants $c >1$, $\textcolor{black}{C > 1}$ such that for all $t \in (0,1]$, $x \in \mathbb{R}$, and $i=1,\ldots,K$
\begin{equation*}
\frac{1}{\textcolor{black}{C}\sqrt{t}} \exp\left(-c\frac{x^2}{t}\right) \leq  \textcolor{black}{p_{i,X}(t,x)} \leq \dfrac{\textcolor{black}{C}}{\sqrt{t}} \exp\left(-\frac{x^2}{ct}\right).
\end{equation*}
\end{prop}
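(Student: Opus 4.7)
The plan is to combine a Girsanov reduction with classical Aronson-type Gaussian bounds for a driftless SDE. Fix $i \in \cY$ and write $X^{(i)}$ for the solution of the SDE with drift $b_i^*$ and diffusion $\sigma^*$, so that $p_i(t,\cdot)$ is its transition density. By Assumption~\ref{ass:Novikov},
\[
\mathcal{E}_t^i := \exp\!\left(-\int_0^t \frac{b_i^*}{\sigma^*}(X^{(i)}_s)\, dW_s - \tfrac{1}{2}\int_0^t \frac{(b_i^*)^2}{\sigma^{*2}}(X^{(i)}_s)\, ds\right)
\]
is a true martingale, and defining $Q_i$ by $dQ_i = \mathcal{E}_1^i\, dP$ turns $\tilde{W}_t := W_t + \int_0^t (b_i^*/\sigma^*)(X^{(i)}_s)\, ds$ into a $Q_i$-Brownian motion. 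Under $Q_i$, the process $X^{(i)}$ solves the driftless SDE $dX^{(i)}_t = \sigma^*(X^{(i)}_t)\, d\tilde{W}_t$ with $X_0^{(i)} = 0$.

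Second, under $Q_i$ the generator of $X^{(i)}$ is $\mathcal{L} = \tfrac{1}{2}\sigma^{*2}(x)\partial_x^2$, which is uniformly elliptic by Assumption~\ref{ass:RegEll}(ii) with coefficient of class $\mathcal{C}^2$ and polynomially controlled derivatives by (i) and (iii). I would apply the classical Aronson--Friedman two-sided Gaussian bounds for the fundamental solution of $\partial_t q = \mathcal{L}^* q$ to conclude that the transition density $q(t,x)$ of $X^{(i)}$ under $Q_i$ starting from $0$ satisfies
\[
\frac{c_1}{\sqrt{t}} \exp\!\left(-c_2 \tfrac{x^2}{t}\right) \,\leq\, q(t,x) \,\leq\, \frac{c_3}{\sqrt{t}} \exp\!\left(-c_4 \tfrac{x^2}{t}\right),
\]
for constants depending only on $\sigma_0^*, \sigma_1^*$ and the Lipschitz constant of $\sigma^*$. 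To transfer these bounds back to $p_i$, I would use the Bayes-type identity
\[
p_i(t, x) = q(t, x) \cdot \mathbb{E}_{Q_i}\!\left[ 1/\mathcal{E}_t^i \,\big|\, X_t^{(i)} = x \right],
\]
and show that the conditional expectation is bounded above and below by positive constants uniformly in $x \in \mathbb{R}$ and $t \in (0,1]$.

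The main obstacle is precisely the control of this conditional Girsanov weight, since $b_i^*$ is only Lipschitz and therefore unbounded. For the upper bound, I plan to apply Cauchy--Schwarz in the conditional expectation, reducing matters to a second-moment estimate on $1/\mathcal{E}_t^i$ that follows from a reinforced Novikov-type bound, using the linear growth of $b_i^*$ together with finite polynomial (and exponential in $|x|^2$) moments of $\sup_{s\leq 1}|X_s^{(i)}|$ in the spirit of Lemma~\ref{lem:discrete}. For the lower bound, I would use Jensen's inequality inside the conditional expectation and estimate the conditional mean of the Girsanov exponent along the diffusion bridge of $X^{(i)}$ pinned at $x$ at time $t$ under $Q_i$, using the uniform ellipticity of $\sigma^*$ to control the bridge's fluctuations. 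A key technical device is to localize on events of the form $\{\sup_{s\leq 1}|X^{(i)}_s|\leq R\}$ with $R$ chosen of polynomial size in $|x|$ plus a logarithmic buffer, absorbing the tail contribution into the ambient Gaussian factor. This program mirrors the Malliavin-calculus approach of~\citep{gobet2002lan}, in which explicit bridge computations are replaced by integration-by-parts formulas yielding tractable density representations.
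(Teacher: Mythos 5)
The paper does not prove Proposition~\ref{prop:densityTransition}; it is cited from \cite{gobet2002lan}. Your overall template (Girsanov reduction, Aronson-type bounds for the reduced diffusion, transfer back through the conditional Girsanov weight) is the same template used in \cite{gobet2002lan} and mirrored in the paper's own Lemma~\ref{lm:DensityConstSigma} for the constant-$\sigma$ case, so the strategy is sound. However, there is a genuine gap in the central step. You claim that the conditional Girsanov weight
\begin{equation*}
\mathbb{E}_{Q_i}\!\left[1/\mathcal{E}_t^i \,\big|\, X_t^{(i)}=x\right]
\end{equation*}
is \emph{bounded above and below by positive constants uniformly in} $x\in\mathbb{R}$ and $t\in(0,1]$, and you would then read off the bounds on $p_i$ directly from those on $q$. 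This is false when $b_i^*$ is only Lipschitz, hence of linear growth: as the pinned endpoint $|x|$ grows, the Girsanov exponent accumulates a contribution of order $x^2/t$, so the conditional weight necessarily grows like $\exp(\delta x^2/t)$ for some $\delta>0$ (and shrinks like $\exp(-\delta x^2/t)$ on the other side). This is precisely why the asserted bounds are \emph{relaxed} Gaussian bounds with $c>1$ and $1/c<1$ replacing the driftless constant $1/(2\sigma^{*2})$: the excess exponential growth of the weight is absorbed into the Gaussian factor at the cost of widening the constant. If the weight were uniformly bounded, one would get sharp Gaussian bounds with $c$ arbitrarily close to $1$, which is not what the proposition says and not what is true.

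Your closing remark about ``absorbing the tail contribution into the ambient Gaussian factor'' is the right idea, but it directly contradicts the uniform-boundedness claim that precedes it, and you never reconcile the two. The correct plan is: establish a bound of the form
\begin{equation*}
\mathbb{E}_{Q_i}\!\left[1/\mathcal{E}_t^i \,\big|\, X_t^{(i)}=x\right] \;\leq\; C\exp\!\left(\delta\,\frac{x^2}{t}\right), \qquad \mathbb{E}_{Q_i}\!\left[\mathcal{E}_t^i \,\big|\, X_t^{(i)}=x\right] \;\leq\; C\exp\!\left(\delta\,\frac{x^2}{t}\right),
\end{equation*}
for some small $\delta>0$ depending on $L_0,\sigma_0^*,\sigma_1^*$, and then multiply into the Aronson bounds to obtain the weakened exponents $1/(ct)$ and $c/t$. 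The Malliavin/integration-by-parts representation of the conditional weight (which you gesture at in the last sentence, and which is exactly the device used in the proof of (1.6) of \cite{gobet2002lan} and reproduced in the paper's proof of Lemma~\ref{lm:DensityConstSigma}) is what delivers such a bound; a Cauchy--Schwarz plus second-moment argument alone will not give the needed $x$-dependence. Finally, a small side remark: the Aronson constants depend only on the ellipticity bounds $\sigma_0^*,\sigma_1^*$ and not on the Lipschitz constant of $\sigma^*$.
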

From this result, we can deduce an evaluation of the probability of the process to exit a compact set. This is the purpose of the next result.
\begin{lemme}
\label{lem:controleSortiCompact}
Under Assumption~\ref{ass:RegEll} and \ref{ass:Novikov}, there exist $C_1,C_2 >0$ such that for all $A >0$
\begin{equation*}
\sup_{t \in [0,1]}\P\left(\left|X_t\right|\geq A\right) 
\leq \frac{C_1}{A} \exp(-C_2A^2).
\end{equation*}
\end{lemme}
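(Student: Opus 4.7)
The plan is to control the tail probability by integrating the Gaussian-type upper bound on the transition density supplied by Proposition~\ref{prop:densityTransition}. Since the marginal density of $X_t$ decomposes as $p(t,x) = \sum_{i=1}^K \mfp_i^* p_i(t,x)$ and the per-class bound $p_i(t,x) \leq (K/\sqrt{t}) \exp(-x^2/(ct))$ holds with constants that do not depend on $i$, the same Gaussian upper bound transfers to $p(t,\cdot)$ (up to absorbing $\sum_i \mfp_i^* = 1$ into the constant).

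The computation then proceeds by writing
\begin{equation*}
\P(|X_t| \geq A) \;=\; \int_{|x|\geq A} p(t,x)\,dx \;\leq\; 2\int_A^{+\infty} \frac{K}{\sqrt{t}}\,\exp\!\left(-\frac{x^2}{ct}\right) dx,
\end{equation*}
and invoking the elementary Mills-type estimate $\int_A^{+\infty} e^{-\alpha x^2}\,dx \leq (2\alpha A)^{-1} e^{-\alpha A^2}$ with $\alpha = 1/(ct)$. This yields
\begin{equation*}
\P(|X_t| \geq A) \;\leq\; \frac{K c \sqrt{t}}{A}\,\exp\!\left(-\frac{A^2}{ct}\right).
\end{equation*}

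To take the supremum over $t \in (0,1]$, I would simply use the two monotonicity observations $\sqrt{t} \leq 1$ and $-A^2/(ct) \leq -A^2/c$, giving the uniform bound with $C_1 = Kc$ and $C_2 = 1/c$. At $t=0$ the probability vanishes since $X_0=0$ and $A>0$, so the estimate trivially extends to the closed interval $[0,1]$.

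There is no serious obstacle here; the lemma is a direct consequence of Proposition~\ref{prop:densityTransition} combined with a Gaussian tail inequality. The only mildly delicate point is to check that the constants in the density bound can be chosen independently of $t \in (0,1]$ so that the Mills-ratio estimate can be used uniformly — but this is immediate from the statement of Proposition~\ref{prop:densityTransition}, which already provides such constants.
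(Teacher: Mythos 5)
Your proof is correct and follows essentially the same route as the paper: integrate the Gaussian-type upper bound on $p_i(t,\cdot)$ from Proposition~\ref{prop:densityTransition}, control the tail integral by the standard Mills-ratio trick $\int_A^{+\infty} e^{-\alpha x^2}\,dx \leq (2\alpha A)^{-1}e^{-\alpha A^2}$, and then absorb the $t$-dependence using $\sqrt{t}\leq 1$ and the monotonicity of $t\mapsto \exp(-A^2/(ct))$ on $(0,1]$. Your additional remark that the bound transfers from $p_i$ to $p=\sum_i \mfp_i^* p_i$ is a small but useful clarification the paper leaves implicit.
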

\begin{proof}
Let $A >0$, we have for $t \in (0,1]$,
\begin{equation*}
\P\left(\left|X_t\right| \geq A\right) = \textcolor{black}{\int_{A}^{+\infty} p_X(t,x) \mathrm{d}x + \int_{A}^{+\infty} p_X(t,-x) \mathrm{d}x}.   
\end{equation*}
From Proposition~\ref{prop:densityTransition}, we then deduce that
\begin{eqnarray*}
\P\left(\left|X_t\right| \geq A\right)  
 \leq   C \dfrac{\sqrt{t}}{A} \int_{A}^{+\infty} \textcolor{black}{c\frac{2x}{t}}\exp\left(-c\frac{x^2}{t}\right)\mathrm{d}x
 \leq  \frac{C\sqrt{t}}{A} \exp\left(-\frac{cA^2}{t}\right).
\end{eqnarray*}
From the above inequality, and using that $t\in (0,1]$, we deduce the result.
\end{proof}
\begin{lemme}
\label{lem:boundDensity}
\textcolor{black}{Under Assumption~\ref{ass:RegEll}, there} exist $C_0, C_1$, and $C_2$, such that for $i =1,\ldots,K$, for $x \in [-A,A]$, we have
\begin{equation*}
C_1 \exp\left(-C_2 \textcolor{black}{A^2}\right)\leq \dfrac{1}{n} \sum_{k=1}^{n-1} \textcolor{black}{p_{i,X}(k\Delta,x)} \leq    C_0.
\end{equation*}
\end{lemme}
\begin{proof}[\textbf{Proof of Lemma \ref{lem:boundDensity}}]
For $i \in \{1, \ldots, K\}$, for all $x\in \R $, we have from Proposition~\ref{prop:densityTransition}, 
\begin{equation}
\label{eq:eqLemmeBoundDensity1}
    \frac{1}{n}\sum_{k=1}^{n}{\textcolor{black}{p_{i,X}(k\Delta,x)}} \leq\frac{C}{n}\sum_{k=1}^{n}{\frac{1}{\sqrt{k\Delta}}}
    = \frac{C}{\sqrt{n}} \sum_{k=1}^n \frac{1}{\sqrt{k}} \leq \frac{2C}{\sqrt{n}} \sum_{k=1}^n \frac{1}{\sqrt{k+1}}.
\end{equation}
Since the function $x\mapsto\frac{1}{\sqrt{x}}$ is decreasing over $[1,+\infty[$, we deduce 
from Equation~\eqref{eq:eqLemmeBoundDensity1} that
\begin{equation*}
\frac{1}{n}\sum_{k=1}^{n}{\textcolor{black}{p_{i,X}(k\Delta,x)}} \leq \dfrac{4C\sqrt{n+1}}{\sqrt{n}} \leq C_0,  
\end{equation*}
which gives the upper bound.
For the lower bound, we observe from Proposition~\ref{prop:densityTransition} that for $k \in [\![1,n-1]\!]$, and $x \in \R$, 
\begin{equation}
\label{eq:eqLemmeBoundDensity2}
 C \exp\left(-\frac{cx^2}{k\Delta}\right) \leq \frac{C}{\sqrt{k\Delta}}\exp\left(-\frac{cx^2}{k\Delta}\right) \leq  p_i(k\Delta,x).
\end{equation}
Since $g : (s,x)\mapsto\exp\left(-\frac{cx^2}{s}\right)$ is strictly increasing in $s$ over $(0,1]$, we obtain for $k \in [\![1,n-1]\!]$,
\begin{equation*}
\int_{\frac{1}{n}}^{\frac{n-1}{n}}{g(s,x)ds} \leq 
\sum_{k=2}^{n-1}{\int_{(k-1)\Delta}^{k\Delta}{\left(g(k\Delta,x)+g(s,x)-g(k\Delta,x)\right)ds}} \leq \frac{1}{n}\sum_{k=1}^{n-1} \exp\left({-\frac{cx^2}{k\Delta}}\right).
\end{equation*}
Hence, we deduce that for $n \geq 3$, and $x \in [-A,A]$
\begin{equation*}
 \dfrac{1}{6}\exp(-2cA^2) \leq \int_{\frac{1}{2}}^{\dfrac{n-1}{n}} g(s,x) \mathrm{d}s  \leq \frac{1}{n}\sum_{k=1}^{n-1} \exp\left({-\frac{cx^2}{k\Delta}}\right).
\end{equation*}For the first lower bound, we use that $g(s,x)\geq e^{-2cA^2}$ for $x\in [-A,A]$ and $s\geq 1/2$, and that the length of $[1/2,(n-1)/n]$ is larger than $1/6$ for $n\geq 3$. \textcolor{black}{This explains our choice of integration interval in the middle term of the above inequalities.}
Finally, gathering this bound with Equation~\eqref{eq:eqLemmeBoundDensity2}, leads to 
$$\frac{1}{6}  \exp(-2cA^2)\leq \frac{1}{n} \sum_{k=1}^{n-1}  \exp\left(-\frac{cx^2}{k\Delta}\right) \leq \frac{1}{n}\sum_{k=1}^{n-1} \textcolor{black}{p_{i,X}(k\Delta,x)}.
$$
\end{proof}

\begin{lemme}
\label{lm:DensityConstSigma}
Suppose that $\sigma^*$ is a constant. \textcolor{black}{Under Assumption~\ref{ass:RegEll}, and for} all $q>1$, there exists $K_q>1$ such that for all $(t,x)\in(0,1]\times [-A,A]$,
$$\frac{1}{K_q \sqrt{t}} \exp \left(-\frac{2 q-1}{2 q \sigma^{* 2} t} x^2\right) \leq \textcolor{black}{p_{X}(t,x)} \leq \frac{K_q}{\sqrt{t}} \exp \left(-\frac{x^2}{2 q \sigma^{* 2} t}\right).$$
\end{lemme}
\begin{proof}[\textbf{Proof of Lemma \ref{lm:DensityConstSigma}}]
The transition density \textcolor{black}{$p^{0}_{X}$} of the process $\left(0+\sigma^{*}W_t\right)_{t\in]0,1]}$ (with a constant diffusion coefficient $\sigma^{*}$) is given by 
\begin{equation}\label{eq:p0}
\textcolor{black}{p^{0}_{X}(t,x)}:=\frac{1}{\sqrt{2\pi\sigma^{*2}t}}\exp\left(-\frac{1}{2\sigma^{*2}t}\left|0-x\right|^2\right).
\end{equation}
We are going to demonstrate the inequality for \textcolor{black}{$p_{i,X}$}, which is the transition density of $X$ in class number $i$. Indeed, then it will be true for all $i \in \cY$ and thus for $p_X=p_{Y,X}$.
We follow here the arguments given in the \textit{proof of (1.6)} in \cite{gobet2002lan}. 
Let us denote, 
$$Z_{i,t}=\exp\left(\int_{0}^{t}{\frac{b^{*}_{i}(X_s)}{\sigma^{*}}dW_s}-\int_{0}^{t}{\frac{b^{*2}_{i}(X_s)}{\sigma^{*2}}ds}\right).$$
We have $\forall (t,x)\in]0,1]\times \R$, 
\begin{align*}
 \textcolor{black}{p_{i,X}(t,x)=p^{0}_{X}(t,x)\E^{0}\left[Z_{i,t}|X_t=x\right]}, 
\end{align*}
and
\begin{equation}\label{eq:pip0}
\textcolor{black}{\frac{1}{p_{i,X}(t,x)}\leq\frac{1}{p^{0}_{X}(t,x)}\E^{0}\left[Z_{i,t}^{-1}|X_t=x\right]}.\end{equation}
Then, 
$$
\E^{0}\left[Z_{i,t}|X_t=x\right]=1+\frac{1}{\textcolor{black}{p^{0}_{X}(t,x)}}\int_{0}^{t}{\E^{0}\left[Z_{i,t}b^{*}_{i}(X_s)\frac{X_s-x}{\sigma^{*2}(t-s)}\textcolor{black}{p^{0}_{X}(t-s,x)}\right]ds}
$$
and
$$
\E^{0}\left[Z_{i,t}^{-1}|X_t=x\right]=1+\frac{1}{\textcolor{black}{p^{0}_{X}(t,x)}}\int_{0}^{t}{\E^{0}\left[Z_{i,s}^{-1}b^{*}_{i}(X_s)\frac{X_s-x}{\sigma^{*2}(t-s)}\textcolor{black}{p^{0}_{X}(t-s,x)}\right]ds}.
$$
For all $(t,x)\in]0,1]\times \R$, one has :
\begin{align*}
\E^{0}\left[Z_{i,t}|X_t=x\right]&=1+\frac{1}{\textcolor{black}{p^{0}_{X}(t,x)}}\int_{0}^{t}{\E^{0}\left[Z_{i,s}b^{*}_{i}(X_s)\frac{X_s-x}{\sigma^{*2}(t-s)}\textcolor{black}{p^{0}_{X}(t-s,x)}\right]ds}\\
&\leq 1+\frac{C}{\textcolor{black}{p^{0}_{X}(t,x)}}\int_{0}^{t}{\E^{0}\left[Z_{i,s}\left|b^{*}_{i}(X_s)\right|\frac{|X_s-x|}{(t-s)^{3/2}}\exp\left(-\frac{(X_s-x)^{2}}{2\sigma^{*2}(t-s)}\right)\right]ds}\\
&\leq 1+\frac{C}{\textcolor{black}{p^{0}_{X}(t,x)}}\int_{0}^{t}{\E^{0}\left[Z_{i,s}\left|b^{*}_{i}(X_s)\right|\frac{1}{\varepsilon(t-s)}\exp\left(-\frac{(1-\varepsilon)(X_s-x)^{2}}{2\sigma^{*2}(t-s)}\right)\right]ds}
\end{align*}
using that $y \varepsilon \exp(- \varepsilon y^2/2) \leq 1$ for $0<\varepsilon<1$.
Let $q,q^{\prime}>1$ be two real numbers such that $\frac{1}{q}+\frac{1}{q^{\prime}}=1$.
Using H\"older's inequality, and the Lipschitz property of $b^*$, one has:
\begin{equation}\label{eq:E0}
\E^{0}\left[Z_{i,t}|X_t=x\right]\leq 1+\frac{C \varepsilon^{-1}}{\textcolor{black}{p^{0}_{X}(t,x)}}\int_{0}^{t}{\left(\E^{0}\left[\frac{Z^{q}_{i,s}{q}\left(1+|X_s|\right)^{q}}{(t-s)^{q}}\right]\right)^{\frac{1}{q}}\left(\E^{0}\left[\exp\left(-\frac{\left(X_s-x\right)^{2}}{2\sigma^{*2}(t-s)}\right)\right]\right)^{\frac{1}{q^{\prime}}}ds}
\end{equation}
with $\varepsilon = 1-1/q^{\prime}$. 
According to \textit{Lemma A.1 in \cite{gobet2002lan}}, one has:
\begin{align*}
\forall q>1, \ \ \E^{0}\left[Z^{q}_{i,s}\left(1+|X_s|\right)^{q}\right]+\E^{0}\left[Z^{-q}_{i,s}\left(1+|X_s|\right)^{q}\right]\leq C_{1}
\end{align*}
where $C_1>0$ is a constant. Thus, it remains to upper bound $\E^{0}\left[\exp\left(-\frac{\left(X_s-x\right)^{2}}{2\sigma^{*2}(t-s)}\right)\right]$ and then deduce an upper bound of $\E^{0}\left[Z_{i,t}|X_t=x\right]$. For all $s<t$, we have:
\begin{align*}
\sqrt{2\pi\sigma^{*2}s}\E^{0}\left[\exp\left(-\frac{\left(X_s-x\right)^{2}}{2\sigma^{*2}(t-s)}\right)\right] &= \int_{\R}{\exp\left(-\frac{1}{2\sigma^{*2}(t-s)}\left(z-x\right)^{2}\right)\exp\left(-\frac{1}{2\sigma^{*2}s}z^{2}\right)dz}
\end{align*}
It follows that,
\begin{align*}
\E^{0}\left[\exp\left(-\frac{\left(X_s-x\right)^{2}}{2\sigma^{*2}(t-s)}\right)\right]&=\sqrt{\frac{t-s}{t}}\exp\left(-\frac{x^2}{2\sigma^{*2}t}\right).
\end{align*}
Thus, from Equation \eqref{eq:E0}, we obtain:
\begin{align*}
\E^{0}\left[Z_{i,t}|X_t=x\right]&\leq 1+\frac{C \varepsilon^{-1}}{\textcolor{black}{p^{0}_{X}(t,x)}}\int_{0}^{t}{\frac{(t-s)^{\frac{1}{2q^{\prime}}-1}}{t^{\frac{1}{2q^{\prime}}}}\exp\left(-\frac{x^2}{2q^{\prime}\sigma^{*2}t}\right)ds}\\
&\textcolor{black}{\leq 1+\frac{C \varepsilon^{-1} t^{-1/2q^{\prime}}}{p^{0}_{X}(t,x)}\exp\left(-\frac{x^2}{2q^{\prime}\sigma^{*2}t}\right)\left[-2q^{\prime}(t-s)^{1/2q^{\prime}}\right]^{t}_{0}}\\
&\leq 1+\frac{C \varepsilon^{-1}}{\textcolor{black}{p^{0}_{X}(t,x)}\sqrt{t}}\exp\left(-\frac{x^2}{2q^{\prime}\sigma^{*2}t}\right),
\end{align*}
%
From the definition of function \textcolor{black}{$p^{0}_{X}$} given in Equation \eqref{eq:p0} together with relation \eqref{eq:pip0}, we obtain that
$$
\textcolor{black}{p_{i,X}(t,x) \leq 
p^{0}_{X}(t,x) \left(1+\frac{C \varepsilon^{-1}}{p^{0}_{X}(t,x)\sqrt{t}}\exp\left(-\frac{x^2}{2q^{\prime}\sigma^{*2}t}\right)\right)}.
$$
Thus, there exists a constant $K_{q}>1$ (as $\varepsilon=1-1/q'$ and $1/q+1/q'=1$) such that,
\begin{equation}
\label{upperbound density}
\forall (t,x)\in]0,1]\times\R, \ \ \textcolor{black}{p_{i,X}(t,x)}\leq\frac{K_{q}}{\sqrt{t}}\exp\left(-\frac{x^2}{2q^{\prime}\sigma^{*2}t}\right), \ \ \forall q^{\prime}>1.
\end{equation} 
Following the same lines,  one has
$$
\E^{0}\left[Z^{-1}_{i,t}|X_t=x\right] \leq 1+\frac{C^{\mathrm{te}}}{\textcolor{black}{p^{0}_{X}(t,x)}\sqrt{t}}\exp\left(-\frac{x^2}{2q^{\prime}\sigma^{*2}t}\right).
$$
Also, there exists a constant $K_{q}>1$, such that,
\begin{equation}
\label{lowerbound density}
\forall (t,x)\in]0,1]\times \R \ \ \textcolor{black}{p_{i,X}(t,x)}\geq\frac{1}{K_{q}\sqrt{t}}\exp\left(-\frac{2q^{\prime}-1}{2q^{\prime}\sigma^{*2}t}x^{2}\right), \ \ \forall q^{\prime}>1.
\end{equation}
The final result is deduced from \eqref{upperbound density} and \eqref{lowerbound density}.
\end{proof}

\subsection{Proofs of Section~\ref{sec:classProc}}

%
\textcolor{black}{We begin by providing} the proof of Theorem~\ref{thm:comparisonInequality} that relies in part on Proposition~\ref{prop:excessRiskClass}. 

\begin{proof}[\textbf{Proof of Theorem~\ref{thm:comparisonInequality}}]

From Proposition~\ref{prop:excessRiskClass}, we have the following inequality
\begin{equation}
\label{eq:eqDecompExcessRisk0}
\E\left[\mathcal{R}(\w{g}) - \mathcal{R}(g^*)\right]    
\leq 2 \sum_{i=1}^K \E \left[ \left|\w{\pi}_i(X)-\pi^*_i(X)\right|\right].
\end{equation}    
We define  $\bar{{\bf F}}$  the discretized version
of  ${\bf F}^*$,
\begin{equation*}
{\bar{\bf F}} = (\bar{F}_1, \ldots, \bar{F}_K), \;\; {\rm with} \;\; \bar{F}_{i}(X)=\sum_{k=0}^{n-1}{\left(\frac{b^*_{i}}{\sigma^{*2}}(X_{k\Delta})\left(X_{(k+1)\Delta}-X_{k\Delta}\right)-\frac{\Delta}{2}\frac{b^{*2}_{i}}{\sigma^{*2}}(X_{k\Delta})\right)},  
\end{equation*}
and for each $i \in \mathcal{Y}$, $\bar{\pi}^*_i = \phi_i\left(\bar{\bf F}\right)$ the discretized version of $\pi_i^*$, {and $\bar{\pi}_i=\phi_i(\boldsymbol{\w{F}})$}.
From Equation~\eqref{eq:eqDecompExcessRisk0},
we deduce
\begin{eqnarray}
\E\left[\mathcal{R}(\w{g}) - \mathcal{R}(g^*)\right]   
&\leq & 2 \left(\sum_{i=1}^K \E \left[ \left|\w{\pi}_i(X)-{\bar{\pi}_i(X)}\right|\right]+ {\E \left[ \left|\bar{\pi}_i(X)-\bar{\pi}^*_i(X)\right|\right]}\right.\nonumber\\
&& + \left.\sum_{i=1}^K {\E \left[ \left|\bar{\pi}^*_i(X)-\pi^*_i(X)\right|\right]}\right)\nonumber\\
 &\leq & 2 \sum_{i=1}^K \E \left[ \left|\w{\phi}_i(\boldsymbol{\w{F}}(X))-\phi_i(\boldsymbol{\w{F}}(X))\right|\right]+2 \sum_{i=1}^K \E \left[ \left|\phi_i(\boldsymbol{\w{F}}(X))-\phi_i(\bar{\bf F}(X))\right|\right]\nonumber\\
 &&+2\sum_{i=1}^K \E \left[ \left|\phi_i(\bar{\bf F}(X))-\phi_i({\bf F}^*(X))\right|\right].\label{eq:eqDecompExcessRisk}
\end{eqnarray}
For the first term of the {\it r.h.s.} of the above inequality, we observe that
for $(x_1, \ldots, x_K) \in \R^K$, and $(i,j) \in \cY^2$ we have
\begin{equation*}
\left|\dfrac{\partial}{\partial_{\mfp^{*}_j}}
\frac{\mfp^{*}_i\exp(x_i)}{\sum_{k=1}^K \mfp^{*}_k\exp(x_k)}\right| \leq \frac{1}{\mfp^{*}_0}.
\end{equation*}
Therefore, 
\begin{equation}
\label{eq:eqDecompExcessRisk1}
 \sum_{i=1}^K \E \left[ \left|\w{\phi}_i(\boldsymbol{\w{F}}(X))-\phi_i(\boldsymbol{\w{F}}(X))\right|\right] \leq C_{K,\mfp^{*}_0} \sum_{k=1}^K \E\left[\left|\w{\mfp}_k-\mfp_k\right| \right] \leq \frac{C_{K,\mfp^{*}_0}}{\sqrt{N}}. 
\end{equation}
For the second term of Equation \eqref{eq:eqDecompExcessRisk}, since the softmax function is $1$-Lipschitz, we have for $j \in \mathcal{Y}$
\begin{equation*}
\E \left|\phi_j(\boldsymbol{\w{F}}(X))-\phi_j(\bar{\bf F}(X))\right| \leq 
\sum_{i=1}^K\E\left[\left|\w{F}_i(X)-\bar{F}_i(X) \right|\right].  
\end{equation*}
We set  
$\xi(s):=
k\Delta, $ if
$s\in[k\Delta,(k+1)\Delta)
$, for $ k\in[\![0,n-1]\!]$.
We then deduce that
\begin{multline*}
\left|\w{F}_i(X)-\bar{F}_i(X) \right| \leq 
\int_{0}^{1}{\left|\left(\frac{\w{b}_i}{\w{\sigma}^{2}}-\frac{b^{*}_{i}}{\sigma^{*2}}\right)\left(X_{\xi(s)}\right)b^{*}_{Y}(X_s)\right|ds}
+\frac{1}{2}\int_{0}^{1}{\left|\left(\frac{\w{b}^{2}_{i}}{\w{\sigma}^{2}}-\frac{b^{*2}_{i}}{\sigma^{*2}}\right)\left(X_{\xi(s)}\right)\right|ds}\\
+\left|\int_{0}^{1}{\left(\frac{\w{b}_i}{\w{\sigma}^{2}}-\frac{b^{*}_{i}}{\sigma^{*2}}\right)\left(X_{\xi(s)}\right)\sigma^{*}(X_s)dW_s}\right|,
\end{multline*}
which implies
\begin{eqnarray*}
\E\left[\left|\w{F}_i(X)-\bar{F}_i(X) \right| \right] &\leq &
\E\left[\int_{0}^{1}{\left|\left(\frac{\w{b}_i}{\w{\sigma}^{2}}-\frac{b^{*}_{i}}{\sigma^{*2}}\right)\left(X_{\xi(s)}\right)b^{*}_{Y}(X_s)\right|ds}\right]
+\frac{1}{2}\E\left[\int_{0}^{1}{\left|\left(\frac{\w{b}^{2}_{i}}{\w{\sigma}^{2}}-\frac{b^{*2}_{i}}{\sigma^{*2}}\right)\left(X_{\xi(s)}\right)\right|ds}\right]\\
&&+\E\left[\int_{0}^{1}{\left(\frac{\w{b}_i}{\w{\sigma}^{2}}-\frac{b^{*}_{i}}{\sigma^{*2}}\right)^2\left(X_{\xi(s)}\right)\sigma^{*2}(X_s)ds}\right].
\end{eqnarray*}
Since for all $x$, $\sigma^*(x) \geq \sigma_0^*$, and
$\w{\sigma} \geq {\sigma}_0$, we get 
\begin{equation}\label{eq:majratio}
\begin{cases}
\left|\frac{\w{b}_i}{\w{\sigma}^{2}}(x)-\frac{b^{*}_{i}}{\sigma^{*2}}(x)\right| \leq  {\sigma}_{0}^{-2}\left|\w{b}_i(x)-b^{*}_{i}(x)\right|+{\sigma}_{0}^{-2}\sigma^{*-2}_{0}\left|b^{*}_i(x)\right|\left|\w{\sigma}^{2}(x)-\sigma^{*2}(x)\right|,\\ \\
\left|\frac{\w{b}^{2}_i}{\w{\sigma}^{2}}(x)-\frac{b^{*2}_{i}}{\sigma^{*2}}(x)\right|  \leq 
{\sigma}_{0}^{-2} \left|\w{b}_i(x)+b^{*}_{i}(x)\right|\left|\w{b}_i(x)-b^{*}_{i}(x)\right|+{\sigma}_{0}^{-2}\sigma^{*-2}_{0}\left|b^{*}_i(x)\right|^2\left|\w{\sigma}^{2}(x)-\sigma^{*2}(x)\right|.
\end{cases}
\end{equation}
Hence, as $\w{b_i}(x) \leq b_{\rm max}$, and 
$\E\left[\sup_{t \in [0,1]}\left| b_i^*(X_t)\right|\right] \leq C_1 $, the above inequalities and the Cauchy-Schwarz inequality yield 
\begin{equation*}
\E\left|\w{F}_i(X)-\bar{F}_i(X) \right| 
\leq C_{\sigma_0^*}{\sigma}_0^{-2} 
\left(b_{\rm max}\E\left\|\w{b}_i-b^*_i\right\|_{n}+\E\left\|\w{\sigma}^2-\sigma^2\right\|_{n}\right). \end{equation*}
Therefore, we have, 
\begin{equation}
\label{eq:eqDecompExcessRisk2}
\sum_{i=1}^K \E \left[ \left|\phi_i(\boldsymbol{\w{F}}(X))-\phi_i(\bar{\bf F}(X))\right|\right] \leq C_{K, \sigma_0^*} {\sigma}_0^{-2} 
\sum_{i=1}^K\left(b_{\rm max}\E\left\|\w{b}_i-b^*_i\right\|_{n}+\E\left\|\w{\sigma}^2-\sigma^2\right\|_{n}\right).
\end{equation}
Finally, the last term is bounded as follows. We first observe that for all $i \in  \mathcal{Y}$
\begin{multline*}
\E\left[\left|\bar{F}_i(X)-{F}^*_i(X)\right|^2\right] \leq 3\E\int_{0}^{1}{\left(\frac{b^{*}_{i}\left(X_{\xi(s)}\right)}{\sigma^{*2}\left(X_{\xi(s)}\right)}-\frac{b^{*}_{i}\left(X_s\right)}{\sigma^{*2}(X_s)}\right)^2b^{*2}_{Y}(X_s)ds}\\
+3\E\int_{0}^{1}{\left(\frac{b^{*2}_{i}\left(X_{\xi(s)}\right)}{\sigma^{*2}\left(X_{\xi(s)}\right)}-\frac{b^{*2}_{i}\left(X_s\right)}{\sigma^{*2}(X_s)}\right)^2ds}
+3\E\int_{0}^{1}{\left(\frac{b^{*}_{i}\left(X_{\xi(s)}\right)}{\sigma^{*2}\left(X_{\xi(s)}\right)}-\frac{b^{*}_{i}\left(X_s\right)}{\sigma^{*2}(X_s)}\right)^2\sigma^{*2}(X_s)ds}.   
\end{multline*}
Using again that $\sigma^*(\cdot) \geq \sigma^*_0$, and
$ \E\left[\sup_{t \in [0,1]}\left|b_i^*(X_t\right)|^q\right] \leq C$ for $q\geq 1$ (by Assumption \ref{ass:RegEll}), the Cauchy-Schwarz inequality implies
\begin{multline*}
\E\left[\left|\bar{F}_i(X)-{F}^*_i(X)\right|^2\right] \leq C_{\sigma_0^*} 
 \left(\int_{0}^{1}{\E\left[\left|b^{*}_{i}\left(X_{\xi(s)}\right)-b^{*}_{i}(X_s)\right|^{2}\right]ds} \right.\\
 +\left. \int_{0}^{1}{\sqrt{\E\left[\left|b^{*}_{i}\left(X_{\xi(s)}\right)-b^{*}_{i}(X_s)\right|^{4}\right]}ds} + \int_{0}^{1}{\sqrt{\E\left[\left|\sigma^ {*2}\left(X_{\xi(s)}\right)-\sigma^{*2}(X_s)\right|^{4}\right]}ds}\right). 
\end{multline*}
Finally, since the functions $b_i^*$, and $\sigma^{*}$ are Lipschitz, we deduce from Lemma~\ref{lem:discrete}
that
\begin{equation*}
\E\left[\left|\bar{F}_i(X)-{F}^*_i(X)\right|^2\right] \leq C_{\sigma_0^*} \Delta,   
\end{equation*}
which implies together with the fact that the sofmax function is $1$-Lipschitz and the Jensen inequality that
\begin{equation}
\label{eq:eqDecompExcessRisk3}
\sum_{i=1}^K \E \left[ \left|\phi_i(\bar{\bf F}(X))-\phi_i({\bf F}^*(X))\right|\right] \leq C_{K,\sigma_0^*} \sqrt{\Delta}.
\end{equation}
In view of Equation~\ref{eq:eqDecompExcessRisk}, the combination of Equations~\eqref{eq:eqDecompExcessRisk1}, and~\eqref{eq:eqDecompExcessRisk2}, and~\eqref{eq:eqDecompExcessRisk3} yields the desired result.
\end{proof}

\begin{proof}[\textbf{Proof of Proposition~\ref{prop:approx}}]
We consider $h$ a $L$-Lipschitz function. We define the spline-approximation $\tilde{h}$ of $h$ by
\begin{equation*}
\tilde{h}(x) := \sum_{\ell=-M}^{\textcolor{black}{K^{*}}-1} h(u_\ell) B_{\ell}(x), \;\; \forall x\in \R.    
\end{equation*}
First, we note that $\tilde{h} \in \textcolor{black}{\mathcal{S}_{K^{*},M}}$. Indeed, since $h$ is $L$-Lipschitz, there exists $C_L > 0$ such that \textcolor{black}{for $A$ large enough},
\begin{equation*}
\textcolor{black}{\left|h(x)\right| \leq C_L(1 + |x|) \leq C A, \;\; \forall x \in [-A, A].} 
\end{equation*}
Therefore, for $N$ large enough, we have
\begin{equation*}
\left|h(x)\right| \leq \textcolor{black}{A\log^{1/2}(N)}.  
\end{equation*}
Then, we deduce
\begin{equation*}
\textcolor{black}{\sum_{\ell=-M}^{K^{*}-1} h^2(u_\ell) \leq (K^{*}+M) A^2\log(N).} 
\end{equation*}
For \textcolor{black}{$x \in [-A, A]$}, there exists $0 \leq \ell_0 \leq \textcolor{black}{K^{*}}-1$ such that $x \in [u_{\ell_0}, u_{\ell_0+1})$.
We use the following property of the $B$-spline basis 
\begin{equation*}
B_\ell(x) = 0, \;\;  {\rm if} \;\; x \notin [u_\ell,u_{\ell+M+1}), \;\; \ell =-M, \ldots, K_N+M.    
\end{equation*}
Hence, for $x \in [u_{\ell_0}, u_{\ell_0+1})$, we have $B_\ell(x) =0$ for  $\ell \leq \ell_0-M-1$, and $\ell \geq  \ell_0+M$.
Thus,
\begin{eqnarray*}
\left|\tilde{h}(x)-h(x)\right| & \leq & 
\sum_{\ell=-M}^{\textcolor{black}{K^{*}}-1} \left|h(u_\ell) -h(x)\right| B_\ell(x)\\
& = & \sum_{\ell=\ell_0-M}^{\ell_0} \left|h(u_\ell) -h(x)\right| B_l(x)\\
& \leq & \max_{\ell =\ell_0-M, \ldots, \ell_0} \left|h(u_\ell) -h(x)\right|\\
&\leq & L(u_{\ell_0+1} -u_{\ell_0-M} )\leq \textcolor{black}{\frac{2L(M+1)A}{K^{*}}}, 
\end{eqnarray*}
which concludes the proof.
\end{proof}

\begin{proof}[\textbf{Proof of Theorem~\ref{thm:cveDriftSig}}]

The proof is divided in two parts. The first part establishes the rates of convergence of the drift estimators, and the second part is devoted to the study of the rates of convergence of the diffusion coefficient estimator.

\paragraph*{Rates of convergence for drift estimator.}
 
Let $i \in\{1, \ldots, K\}$. 
We introduce, \textcolor{black}{on the random event $\{N_i > 1\}$, and with $A_{N_i} = \log(N_i)$}, the function, $$\bar{b}_{i} := b_i^*\textcolor{black}{\one_{(-A_{N_i}, A_{N_i})}}.$$
We recall that \textcolor{black}{$N_i = \sum_{j=1}^N \one_{\{Y_j=i\}}$ is the random number of paths in the class number $i$}.
For a function $h$, we introduce the empirical norm of class $i$ \textcolor{black}{on the event $\{N_i > 1\}$} as
\begin{equation*}
 \left\|h\right\|^2_{n,N_i}  := \dfrac{1}{nN_i}\sum_{j \in \mathcal{I}_j}\sum_{k = 0}^{n-1} h^2(X^j_{k\Delta}) 
\end{equation*}
We first observe that 
\begin{equation*}
\E\left[\left\|\w{b}_i-b_i^*\right\|_n\right]  =   \E\left[\left\|\w{b}_i-b_i^*\right\|_n \textcolor{black}{\one_{\{N_i > 1\}}}\right] + \E\left[\left\|\w{b}_i-b_i^*\right\|_n\textcolor{black}{\one_{\{N_i \leq 1\}}}\right]. 
\end{equation*}
Let us work at first on the event \textcolor{black}{$\{N_i > 1\}$}. For all $i\in\cY$, we define the following conditional expectation 
\begin{equation*}
    \E_i[.] = \E[.|\one_{\{Y_1=i\}}, \ldots, \one_{\{Y_N=i\}}].
\end{equation*}
We apply Proposition~\ref{prop:approx}, and Proposition~3.2 
of~\cite{denis2020ridge} on the event \textcolor{black}{$\{N_i > 1\}$} and deduce that
\begin{equation}
\label{eq:eqb1}
\E_i\left[\left\|\w{b}_i-\bar{b}_i\right\|^2_{n,N_i}\right] 
\leq C\left(\textcolor{black}{\dfrac{A^{2}_{N_i}}{K^2_{N_i}}} + \sqrt{\dfrac{\textcolor{black}{K_{N_i}A^{2}_{N_i}\log(N)}}{N_i}}+\Delta\right).
\end{equation}
Now, for all $i\in\cY$, let us write
\begin{equation}
\label{eq:eqb2}
\E_i\left[\left\|\w{b}_i-\bar{b}_i\right\|^2_{n,i}\right]
= \E_i\left[\left\|\w{b}_i-\bar{b}_i\right\|^2_{n,i}\right]-2
\E_i\left[\left\|\w{b}_i-\bar{b}_i\right\|^2_{n,N_i}\right]+2\E_i\left[\left\|\w{b}_i-\bar{b}_i\right\|^2_{n,N_i}\right].
\end{equation}
For $h \in \textcolor{black}{\mathcal{S}_{K_{N_i},M}}$, we denote by $\bar{h}$ its thresholded counterpart
\begin{equation*}
\bar{h}(\cdot) :=h(\cdot) \one_{\{|h(\cdot)|\leq \textcolor{black}{A_{N_i}\log^{1/2}(N)}\}} + {\rm sgn}(h(\cdot))\textcolor{black}{A_{N_i}\log^{1/2}(N)}  \one_{\{|h(\cdot)| > \textcolor{black}{A_{N_i}\log^{1/2}(N)}\}}.
\end{equation*}
We also denote \textcolor{black}{$\mathcal{H}_{K_{N_i},M}: = \{\bar{h}, ~ h \in \mathcal{S}_{K_{N_i},M}\}$}.
Then, \textcolor{black}{on the event $\{N_i > 1\}$}, we have that
\begin{eqnarray*}
\E_i\left[\left\|\w{b}_i-\bar{b}_i\right\|^2_{n,i}\right]-2
\E_i\left[\left\|\w{b}_i-\bar{b}_i\right\|^2_{n,N_i}\right] &\leq & \E_i\left[\sup_{\bar{h} \in \textcolor{black}{\mathcal{H}_{K_{N_i},M}}} \left\|\bar{h}-\bar{b}_i\right\|^2_{n,i}-2
\left\|\bar{h}-\bar{b}_i\right\|^2_{n,N_i}\right]\\
&\leq &\E_i\left[\sup_{g \in \textcolor{black}{\mathcal{G}_{K_{N_i},M}}}\E_{X|Y=i}\left[g(\bar{X})-\frac{2}{N_i}\sum_{i \in \mathcal{I}} g(\bar{X}^i)\right]\right],
\end{eqnarray*}
with $\textcolor{black}{\mathcal{G}_{K_{N_i},M}} = \{(x_1, \ldots,x_n) \mapsto \frac{1}{n}\sum_{k=1}^n\left|\bar{h}(x_k) -\bar{b}_i(x_k)\right|^2, \bar{h} \in \textcolor{black}{\mathcal{H}_{K_{N_i},M}}\}$. 
For each $g \in \textcolor{black}{\mathcal{G}_{K_{N_i},M}}$ and $x \in \mathbb{R}$, we have \textcolor{black}{on the event $\{N_i > 1\}$,} 
\begin{equation*}
0 \leq g(x) \leq 4 \textcolor{black}{A^{2}_{N_i}\log(N)}.     
\end{equation*}
Furthermore, we have that \citep[see][]{denis2020ridge}
\begin{equation*}
\textcolor{black}{\mathcal{N}_{\infty}\left(\varepsilon, \mathcal{G}_{K_{N_i},M}\right) \leq \left(\dfrac{12(K_{N_i}+M)A^{2}_{N_i}\log(N)}{\varepsilon}\right)^{K_{N_i}+M} \leq 
\left(\dfrac{12(K_N+M)\log^{3}(N)}{\varepsilon}\right)^{K_N+M}}.
\end{equation*}
Therefore, we deduce from Lemma~A.2 in~\cite{denis2020ridge} with $\varepsilon= \dfrac{12(K_N+M)\log^3(N)}{N_i}$, Equation~\eqref{eq:eqb1}, and Equation~\eqref{eq:eqb2},
that on the event \textcolor{black}{$\{N_i>1\}$ with $A_{N_i} = \log(N_i)$}
\begin{equation}
\label{eq:eqb3}
\E_i\left[\left\|\widehat{b}_i-\bar{b}_i\right\|^2_{n,i}\right] \leq 
 C\left(\textcolor{black}{\dfrac{\log^2(N_i)}{K^{2}_{N_i}}} + \sqrt{\dfrac{\textcolor{black}{K_{N_i}\log^{2}(N_i)\log(N)}}{N_i}}+    \dfrac{\textcolor{black}{\log^{2}(N_i)\log^2(N)K_{N_i}}}{N_i}+\Delta\right).
\end{equation}
\textcolor{black}{Thus, choosing $K_{N_i} \propto (N_i\log(N_i))^{1/5}$ and for $\log(N_i) \leq \log(N)$, we obtain from Equation~\eqref{eq:eqb3} that}
\begin{equation}
    \label{eq:eqb3-bis}
    \textcolor{black}{\E_i\left[\left\|\widehat{b}_i-\bar{b}_i\right\|^2_{n,i}\right] \leq C\left(\frac{\log^{8/5}(N)}{N^{2/5}_{i}} + \frac{\log^{21/5}(N)}{N^{4/5}_{i}} + \Delta\right) }
\end{equation}
Using Jensen's inequality, we have
\begin{eqnarray*}
\E\left[\left\|\w{b}_i-b_i^{*}\right\|_{n,i}\textcolor{black}{\one_{\{N_i > 1\}}}\right] \leq \sqrt{\E\left[\left\|\w{b}_i-{\bar{b}}_i\right\|^2_{n,i}\textcolor{black}{\one_{\{N_i > 1\}}}\right]
+ \E\left[\left\|\bar{b}_i-b_i^*\right\|^2_{n,i}\textcolor{black}{\one_{\{N_i > 1\}}}\right]}.
\end{eqnarray*}

Finally, let us study then the error $\left\|\bar{b}_i-b_i^*\right\|^2_{n,i}$. \textcolor{black}{On the event $\{N_i > 1\}$}, 
we observe with the Cauchy-Schwarz inequality
\begin{eqnarray*}
\left\|\bar{b}_i-b_i^*\right\|^2_{n,i} &=& \E_{X|Y=i}\left[\frac{1}{n}\sum_{k=1}^n \left(b_i^{*}(X_{k\Delta})\right)^2\one_{\{|X_{k\Delta}|> \textcolor{black}{A_{N_i}}\}}|~\textcolor{black}{\one_{Y_1=i}, \ldots, \one_{Y_N=i}}\right] \\
&\leq& 
C\sqrt{\sup_{t \in [0,1]}\mathbb{P}_{X|Y=i}\left(|X_t| \geq \textcolor{black}{A_{N_i}} |~\textcolor{black}{\one_{Y_1=i}, \ldots, \one_{Y_N=i}}\right)},
\end{eqnarray*}
since $\sup_{t \in [0,1]}\E\left[b_i^*(X_t)^4\right] \leq C$.
\textcolor{black}{For $A_{N_i} = \log(N_i)$ and} from Lemma~\ref{lem:controleSortiCompact}, we obtain \textcolor{black}{on the event $\{N_i > 1\}$}
\begin{equation*}
\left\|\bar{b}_i-b_i^*\right\|^2_{n,i} \leq C\exp\left(-\frac{C_2}{2}\textcolor{black}{\log^2(N_i)}\right),   
\end{equation*}
which, for \textcolor{black}{$N_i$ a.s.} large enough yields
\begin{equation*}
\left\|\bar{b}_i-b_i^*\right\|_{n,i} \leq \textcolor{black}{CN^{-1/2}_{i}}.  
\end{equation*}
This result leads us to obtain, from Equation \eqref{eq:eqb3-bis}, that
\begin{equation*}
\E\left[\left\|\w{b}_i-b_i^{*}\right\|_{n,i}\textcolor{black}{\one_{\{N_i > 1\}}}\right] \leq
C\left(\textcolor{black}{\E\left[\left(\frac{\log^{4/5}(N)}{N^{1/5}_{i}} + \frac{\log^{21/10}(N)}{N^{2/5}_{i}}\right)\one_{\{N_i>1\}}\right]} 
+\sqrt{\Delta}\right).
\end{equation*}
Using Jensen's inequality, we obtain
\begin{multline*}
\textcolor{black}{\E\left[\left\|\w{b}_i-b_i^{*}\right\|_{n,i}\one_{\{N_i > 1\}}\right] \leq
C\left(\log^{4/5}(N)\left(\E\left[\frac{\one_{N_i > 1}}{N_i}\right]\right)^{1/5} + \log^{21/10}(N)\left(\E\left[\frac{\one_{N_i > 1}}{N_i}\right]\right)^{2/5}  +\sqrt{\Delta}\right)}.
\end{multline*}
To finish the proof, since for all $i\in\cY, \ N_i\sim\mathcal{B}(N,\mfp^{*}_i)$ we use Lemma~4.1 in~\citep{gyorfi2006distribution} to deduce that
\begin{equation*}
    \textcolor{black}{\E\left[\frac{\one_{N_i>1}}{N_i}\right] \leq} \E\left[\frac{\one_{N_i>0}}{N_i}\right] \leq \frac{2}{\mfp^{*}_i N} \leq \frac{2}{\mfp^{*}_0 N}
\end{equation*}
and finally, there exists a constant $C>0$ such that
\begin{equation}
\label{eq:eqb4}
\textcolor{black}{\E\left[\left\|\w{b}_i-b_i^{*}\right\|_{n,i}\one_{\{N_i > 1\}}\right]  \leq C\left( \left(\dfrac{\log^4(N)}{N\mfp^{*}_0} \right)^{1/5} + \sqrt{\Delta}\right)}.   
\end{equation}
To conclude the proof for the rates of convergence of the drift coefficient, we observe that since \textcolor{black}{$\widehat{b}_i$} is bounded by $\log^{3/2}(N)$ and $\sup_{t \in[0,1]} \E\left[b_i^*(X_t)^2\right] <+\infty$, we have \textcolor{black}{for $N$ large enough},
\begin{equation}
\label{eq:eqb5}
\E\left[\left\|\w{b}_i-b_i^*\right\|_{n,i}\textcolor{black}{\one_{\{N_i \leq 1\}}}\right] \leq \textcolor{black}{2\log^{3/2}(N)\left(\mathbb{P}\left(N_i=0\right)+ \mathbb{P}\left(N_i=1\right)\right)}.
\end{equation}
Since $N_i$ is distributed according to a Binomial distribution with parameters $(N,\mfp^{*}_i)$. We deduce that
\begin{equation}\label{eq:eqb6}
\P\left(N_i=0\right) = \exp\left(N\log(1-\mfp^{*}_i)\right), ~~ \textcolor{black}{\P\left(N_i=1\right) = \frac{\mfp^{*}_{i}}{1-\mfp^{*}_{i}}\exp\left(N\log(1-\mfp^{*}_i)\right)}.    
\end{equation}
\textcolor{black}{
Hence, gathering Equation~\eqref{eq:eqb4}, Equation~\eqref{eq:eqb5} and Equation~\eqref{eq:eqb6}, and choosing $\Delta= O(1/N)$, we obtain for each label $i \in \mathcal{Y}$,
$$
\E\left[\left\|\w{b}_i-b_i^*\right\|_{n,i}\right] = \E\left[\left\|\w{b}_i-b_i^*\right\|_{n,i}\one_{N_i > 1}\right] + \E\left[\left\|\w{b}_i-b_i^*\right\|_{n,i}\one_{N_i \leq 1}\right] = \mathrm{O}\left(\left(\frac{\log^{4}(N)}{N\mfp^{*}_{0}}\right)^{1/5}\right).
$$}

\paragraph*{Diffusion coefficient: rates of convergence.}

We estimate the square $\sigma^{*2}$ of the diffusion coefficient as solution of the following regression model
\begin{equation}
\label{eq:RegressionModel-sigma}
    \frac{(X^{j}_{(k+1)\Delta}-X^{j}_{k\Delta})^2}{\Delta}=\sigma^{*2}(X^{j}_{k\Delta})+\zeta^{j}_{k\Delta}+R^{j}_{k\Delta}
\end{equation}
where $\zeta^{j}_{k\Delta}:=\zeta^{j,1}_{k\Delta}+\zeta^{j,2}_{k\Delta}+\zeta^{j,3}_{k\Delta}$ with
\begin{align*}
   \zeta^{j,1}_{k\Delta}&:=\frac{1}{\Delta}\left[\left(\int_{k\Delta}^{(k+1)\Delta}{\sigma^{*}\left(X^{j}_{s}\right)dW^{j}_{s}}\right)^2-\int_{k\Delta}^{(k+1)\Delta}{\sigma^{*2}\left(X^{j}_{s}\right)ds}\right]\\
   \zeta^{j,2}_{k\Delta}&:=\frac{2}{\Delta}\int_{k\Delta}^{(k+1)\Delta}{((k+1)\Delta-s)\sigma^{*\prime}\left(X^{j}_{s}\right)\sigma^{*2}\left(X^{j}_{s}\right)dW^{j}_{s}}\\
   \zeta^{j,3}_{k\Delta}&:=2b^{*}_{Y}\left(X^{j}_{k\Delta}\right)\int_{k\Delta}^{(k+1)\Delta}{\sigma^{*}\left(X^{j}_{s}\right)dW^{j}_{s}},
\end{align*}

and $R^{j}_{k\Delta}:=R^{j,1}_{k\Delta}+R^{j,2}_{k\Delta}+R^{j,3}_{k\Delta}$ with,
\begin{equation}
    \label{eq:Residuals-R1R2}
    R^{j,1}_{k\Delta}:=\frac{1}{\Delta}\left(\int_{k\Delta}^{(k+1)\Delta}{b^{*}_{Y}\left(X^{j}_{s}\right)ds}\right)^2, \ \ \ R^{j,2}_{k\Delta}:=\frac{1}{\Delta}\int_{k\Delta}^{(k+1)\Delta}{((k+1)\Delta-s)\phi_Y\left(X^{j}_{s}\right)ds}
\end{equation}
\begin{equation}
\label{eq:Residuals-R3}
    R^{j,3}_{k\Delta}:=\frac{2}{\Delta}\left(\int_{k\Delta}^{(k+1)\Delta}{\left(b^{*}_{Y}\left(X^{j}_{s}\right)-b^{*}_{Y}\left(X_{k\Delta}\right)\right)ds}\right)\left(\int_{k\Delta}^{(k+1)\Delta}{\sigma^{*}\left(X^{j}_{s}\right)dW^{j}_{s}}\right)
\end{equation}
where $\phi_Y:=b^{*}_{Y}\sigma^{*\prime}\sigma^{*}+\left[\sigma^{*\prime\prime}\sigma^{*}+(\sigma^{*\prime})^2\right]\sigma^{*2}$. We prove in the sequel that $\zeta^{j,1}_{k\Delta}$ is the error term, and all the other terms are negligible residuals. We remind the reader that the estimator $\w{\sigma}^{2}$ of $\sigma^{*2}$ is given in \eqref{eq:boundedsigma}. 
%
%
We rely on the following result:
\begin{lemme}
\label{lm:lm:loss error-PseudoNorm}
Under Assumption~\ref{subsec:ass} \textcolor{black}{and for $\tilde{A}_N = \log(N)$}, the following holds
\begin{equation*}
    \E\left\|\w{\sigma}^{2}-\sigma^{*2}\right\|^{2}_{n,N}\leq 3\underset{h\in\mathcal{S}_{\textcolor{black}{\tilde{K}_N},M}}{\inf}{\|h-\sigma^{*2}\|^{2}_{n}}+C\left(\sqrt{\frac{\textcolor{black}{\tilde{K}_N}\log^{3}(N)}{Nn}}+\Delta_n^{2}\right)
\end{equation*}
where $C>0$ is a constant depending on $\sigma_1$, and where 
\[\|h\|^2_{n,N}=\frac{1}{nN}\sum_{j=1}^N \sum_{k=0}^{n-1} h^2(X^j_{k\Delta}).\]
\end{lemme}
The empirical error of the estimator $\w{\sigma}^{2}$ is given by
\begin{align*}
    \E\left\|\w{\sigma}^{2}-\sigma^{*2}\right\|^{2}_{n}&=2\E\left\|\w{\sigma}^{2}-\sigma^{*2}\right\|^{2}_{n,N}+\left[\E\left\|\w{\sigma}^{2}-\sigma^{*2}\right\|^{2}_{n}-2\E\left\|\w{\sigma}^{2}-\sigma^{*2}\right\|^{2}_{n,N}\right]
    \end{align*}
\textcolor{black}{Since $\tilde{A}_N = \log(N)$}, let us define $\mathcal{H}^{\sigma}$
as the set of functions $\bar{h}$ such that there exists a function $h\in\mathcal{S}_{K_N,M}$ satisfying
\begin{equation*}
\bar{h} = h(x) \one_{\{\frac{1}{\log(N)} \leq h(x)\leq \log^{3/2}(N)\}} + \log^{3/2}(N)  \one_{h(x) > \log^{3/2}(N)\}} + \frac{1}{\log(N)} \one_{\{h(x) \leq \frac{1}{\log(N)} \}}.
\end{equation*}
Using then an $\varepsilon-$net $\mathcal{H}^{\sigma,\varepsilon}$ of $\mathcal{H}^{\sigma}$ with $\varepsilon=\frac{12(\textcolor{black}{\tilde{K}_N}+M)\log^{3}(N)}{N}$, we finally obtain (see \cite{denis2020ridge}, Lemma A.2)
\begin{align*}
    \E\left\|\w{\sigma}^{2}-\sigma^{*2}\right\|^{2}_{n}-2\E\left\|\w{\sigma}^{2}-\sigma^{*2}\right\|^{2}_{n,N}
       &\leq \E\left[\underset{\bar{h}\in \mathcal{H}^{\sigma}}{\sup}{\left\{\E\left\|\bar{h}-\sigma^{*2}\right\|^{2}_{n}-2\E\left\|\bar{h}-\sigma^{*2}\right\|^{2}_{n,N}\right\}}\right]
    \\
    &\leq C\frac{\textcolor{black}{\tilde{K}_N}\log^{4}(N)}{N}.
\end{align*}
Thus, as $\Delta_n=\mathrm{O}(1/N)$, 
 \begin{align*}
     \E\left\|\w{\sigma}^{2}-\sigma^{*2}\right\|^{2}_{n}\leq&~ 3\underset{h\in\mathcal{S}_{\textcolor{black}{\tilde{K}_N},M}}{\inf}{\|h-\sigma^{*2}\|^{2}_{n}}+C\left(\frac{\sqrt{\textcolor{black}{\tilde{K}_N}\log^{3}(N)}}{N}+\frac{\textcolor{black}{\tilde{K}_N}\log^{4}(N)}{N}+\frac{1}{N^2}\right)\\
     \leq&~3\underset{h\in\mathcal{S}_{\textcolor{black}{\tilde{K}_N}M}}{\inf}{\|h-\sigma^{*2}\|^{2}_{n}}+C\frac{\textcolor{black}{\tilde{K}_N}\log^{4}(N)}{N},
 \end{align*}for $N$ large enough.
According to Proposition~\ref{prop:approx}, the bias term satisfies
$$\underset{h\in\mathcal{S}_{\textcolor{black}{\tilde{K}_N},M}}{\inf}{\|h-\sigma^{*2}\|^{2}_{n}}\leq C\frac{\log^{2}(N)}{\textcolor{black}{\tilde{K}^{2}_N}}.$$
Taking $\textcolor{black}{\tilde{K}_N}=(N\log(N))^{1/5}$  leads to
\begin{equation*}
    \E\left\|\w{\sigma}^{2}-\sigma^{*2}\right\|_{n}\leq C_2\left(\frac{\log^{4}(N)}{N}\right)^{1/5}.
\end{equation*}This concludes the proof of Theorem~\ref{thm:cveDriftSig}.
\end{proof}

\begin{proof}[\textbf{Proof of Lemma~\ref{lm:lm:loss error-PseudoNorm}~}]
Denote by
\begin{equation*}
\gamma_{N,n}(h) = \dfrac{1}{nN}\sum_{j=1}^N \sum_{k=0}^{n-1} \left(U_{k\Delta}^j-h(X^j_{k\Delta})\right)^2, 
\end{equation*}
the least square contrast appearing in \eqref{eq:least squares contrast - sigma}. For all $h\in\mathcal{S}_{K_N,M}$, we deduce that
\begin{equation}
\label{eq:property-least squares contrast-sigma}
    \gamma_{n,N}(\w{\sigma}^{2})-\gamma_{n,N}(\sigma^{*2})\leq\gamma_{n,N}(h)-\gamma_{n,N}(\sigma^{*2}).
\end{equation}
Using \eqref{eq:RegressionModel-sigma}, we have for all $h\in\mathcal{S}_{\textcolor{black}{\tilde{K}_N},M}$,
\begin{equation}
\label{eq:diff-gama}
    \gamma_{n,N}(h)-\gamma_{n,N}(\sigma^{*2})=\left\|h-\sigma^{*2}\right\|^{2}_{n,N}+2\nu_1(\sigma^{*2}-h)+2\nu_2(\sigma^{*2}-h)+2\nu_3(\sigma^{*2}-h)+2\mu(\sigma^{*2}-h)
\end{equation}
where
\begin{equation}
\label{eq:functions nu1 nu2 nu2 and mu}
    \nu_i(h)=\frac{1}{nN}\sum_{j=1}^{N}{\sum_{k=0}^{n-1}{h(X^{j}_{k\Delta})\zeta^{j,i}_{k\Delta}}}, \ \ i\in\{1,2,3\}, \ \ \ \mu(h)=\frac{1}{nN}\sum_{j=1}^{N}{\sum_{k=0}^{n-1}{h(X^{j}_{k\Delta})R^{j}_{k\Delta}}},
\end{equation}
we derive from Equations~\eqref{eq:property-least squares contrast-sigma}~and~\eqref{eq:diff-gama}~ that for all $h\in\mathcal{S}_{\textcolor{black}{\tilde{K}_N},M}$,
\begin{equation}
    \label{eq:first risk bound}
    \E\left\|\w{\sigma}^{2}-\sigma^{*2}\right\|^{2}_{n,N}\leq\underset{h\in\mathcal{S}_{\textcolor{black}{\tilde{K}_N},M}}{\inf}{\|h-\sigma^{*2}\|^{2}_{n}}+2\sum_{i=1}^{3}{\E\left[\nu_i(\w{\sigma}^{2}-h)\right]}+2\E\left[\mu(\w{\sigma}^{2}-h)\right].
\end{equation}
For all $i\in\{1,2,3\}$ and for all $h\in\mathcal{S}_{\textcolor{black}{\tilde{K}_N},M}$, taking the constraints \eqref{eq:SKNM} into account, one has
\begin{equation}
\label{eq:upper-bound Enu_i}
    \E\left[\nu_i\left(\w{\sigma}^{2}-h\right)\right]\leq\sqrt{2(\textcolor{black}{\tilde{K}_N}+M)\log^{3}(N)}\sqrt{\sum_{\ell=-M}^{\textcolor{black}{\tilde{K}_N}-1}{\E\left[\nu^{2}_{i}(B_{\ell,M,\mathbf{u}})\right]}}.
\end{equation}
\begin{enumerate}
    \item Upper bound of $\sum_{\ell=-M}^{\textcolor{black}{\tilde{K}_N}-1}{\E\left[\nu^{2}_{1}(B_{\ell,M,\mathbf{u}})\right]}$. According to Equation~\eqref{eq:functions nu1 nu2 nu2 and mu}, we have
    \begin{equation*}
        \forall \ell\in[\![-M,\textcolor{black}{\tilde{K}_N}-1]\!], \ \nu_1(B_{\ell,M,\mathbf{u}})=\frac{1}{nN}\sum_{j=1}^{N}{\sum_{k=0}^{n-1}{B_{\ell,M,\mathbf{u}}(X^{j}_{k\Delta})\zeta^{j,1}_{k\Delta}}}
    \end{equation*}
    where $\zeta^{j,1}_{k\Delta}=\frac{1}{\Delta}\left[\left(\int_{k\Delta}^{(k+1)\Delta}{\sigma^{*}(X^{j}_{s})dW^{j}_s}\right)^2-\int_{k\Delta}^{(k+1)\Delta}{\sigma^{*2}(X^{j}_{s})ds}\right]$ is a martingale satisfying
    \begin{equation*}
        \E\left[\zeta^{1,1}_{k\Delta}|\mathcal{F}^1_{k\Delta}\right]=0 \ \ \mathrm{and} \ \ \E\left[\left(\zeta^{1,1}_{k\Delta}\right)^2|\mathcal{F}^1_{k\Delta}\right]\leq\frac{1}{\Delta^2}\E\left[\left(\int_{k\Delta}^{(k+1)\Delta}{\sigma^{*2}(X^{1}_{s})ds}\right)^2\right]\leq C\sigma^{*4}_{1}
    \end{equation*}
    with $(\mathcal{F}^1_t)_{t\geq 0}$ the natural filtration associated with the Brownian motion $W^1$. We derive that
    \begin{align*}
        \sum_{\ell=-M}^{\textcolor{black}{\tilde{K}_N}-1}{\E\left[\nu^{2}_{1}(B_{\ell,M,\mathbf{u}})\right]}=&\frac{1}{Nn^2}\sum_{\ell=-M}^{\textcolor{black}{\tilde{K}_N}-1}{\E\left[\left(\sum_{k=0}^{n-1}{B_{\ell,M,\mathbf{u}}(X^{j}_{k\Delta})\zeta^{1,1}_{k\Delta}}\right)^2\right]}\\
        =&\frac{1}{Nn^2}\E\left[\sum_{k=0}^{n-1}{\sum_{\ell=-M}^{\textcolor{black}{\tilde{K}_N}-1}{B^{2}_{\ell,M,\mathbf{u}}(X^{1}_{k\Delta})\left(\zeta^{1,1}_{k\Delta}\right)^2}}\right]\\
        \leq&\frac{C}{Nn}
    \end{align*}
    where $C$ is a constant depending on $\sigma^{*}$, for each $k\in[\![0,n-1]\!]$, $\sum_{\ell=-M}^{\textcolor{black}{\tilde{K}_N}-1}{B^{2}_{\ell,M,\mathbf{u}}(X^{1}_{k\Delta})}\leq 1$ since $\sum_{\ell=-M}^{\textcolor{black}{\tilde{K}_N}-1}{B_{\ell,M,\mathbf{u}}(X^{1}_{k\Delta})}=1$ and $B_{\ell,M,\mathbf{u}}(X^{1}_{k\Delta})\leq 1$ for all $\ell=-M,\cdots,\textcolor{black}{\tilde{K}_N}-1$.
    \item Upper bound of $\sum_{\ell=-M}^{\textcolor{black}{\tilde{K}_N}-1}{\E\left[\nu^{2}_{2}(B_{\ell,M,\mathbf{u}})\right]}$.
    For all $k\in[\![0,n-1]\!]$ and for all $s\in[0,1]$, set $\xi(s)=k\Delta$ if $s\in[k\Delta,(k+1)\Delta)$. We have:
    \begin{multline*}
        \sum_{\ell=-M}^{\textcolor{black}{\tilde{K}_N}-1} \E\left[\nu^{2}_{2}(B_{\ell,M,\mathbf{u}})\right]\\
        \begin{aligned}
        &=\frac{4}{N n^2} \sum_{\ell=-M}^{\textcolor{black}{\tilde{K}_N}-1}\E\left[\left(\sum_{k=0}^{n-1}{\int_{k\Delta}^{(k+1)\Delta}{B_{\ell,M,\mathbf{u}}(X^{1}_{k\Delta})((k+1)\Delta-s)\sigma^{*\prime}(X^{1}_{s})\sigma^{*2}(X^{1}_{s})dW_s}}\right)^2\right]\\
        &=\frac{4}{Nn^2}\sum_{\ell=-M}^{\textcolor{black}{\tilde{K}_N}-1}\E\left[\left(\int_{0}^{1}{B_{\ell,M,\mathbf{u}}(X^{1}_{\xi(s)})(\xi(s)+\Delta-s)\sigma^{*\prime}(X^{1}_{s})\sigma^{*2}(X^{1}_{s})dW_s}\right)^2\right]\\
        &\leq\frac{C}{Nn^2}
    \end{aligned}
    \end{multline*}
     where the constant $C>0$ depends on the diffusion coefficient.
    \item Upper bound of $\sum_{\ell=-M}^{\textcolor{black}{\tilde{K}_N}-1}{\E\left[\nu^{2}_{3}(B_{\ell,M,\mathbf{u}})\right]}$. We have:
    \begin{align*}
        \sum_{\ell=-M}^{\textcolor{black}{\tilde{K}_N}-1}{\E\left[\nu^{2}_{3}(B_{\ell,M,\mathbf{u}})\right]}&=\frac{4}{Nn^2}\sum_{\ell=-M}^{\textcolor{black}{\tilde{K}_N}-1}{\E\left[\left(\sum_{k=0}^{n-1}{\int_{k\Delta}^{(k+1)\Delta}{B_{\ell,M,\mathbf{u}}(X^{1}_{k\Delta})b^{*}_{Y}(X^{1}_{k\Delta})\sigma^{*}(X^{1}_{s})dW_s}}\right)^2\right]}\\
        &=\frac{4}{Nn^2}\sum_{\ell = -M}^{\textcolor{black}{\tilde{K}_N}-1}{\E\left[\left(\int_{0}^{1}{B_{\ell,M,\mathbf{u}}(X^{1}_{\eta(s)})b^{*}_{Y}(X^{1}_{\eta(s)})\sigma^{*}(X^{1}_{s})dW_s}\right)^2\right]}\\
        &\leq\frac{4}{Nn^2}\E\left[\int_{0}^{1}{\sum_{\ell=-M}^{\textcolor{black}{\tilde{K}_N}-1}{B^{2}_{\ell,M,\mathbf{u}}(X^{1}_{\eta(s)})b^{*2}_{Y}(X^{1}_{\eta(s)})\sigma^{*2}(X^{1}_{s})ds}}\right].
    \end{align*}
    Since for all $x\in\mathbb{R}, \ b^{*2}_{Y}(x)\leq C_0(1+x^2), \ \sigma^{*2}(x)\leq\sigma^{*2}_{1}$ and $\sup_{t\in[0,1]}\E\left(|X_t|^2\right)<\infty$, there exists a constant $C>0$ depending on the upper bound $\sigma^{*}_1$ of the diffusion coefficient such that
    \begin{align*}
        \sum_{\ell=-M}^{\textcolor{black}{\tilde{K}_N}-1}{\E\left[\nu^{2}_{3}(B_{\ell,M,\mathbf{u}})\right]}\leq \frac{C}{Nn^2}.
    \end{align*}
\end{enumerate}
We finally deduce from Equations~\eqref{eq:first risk bound}~and~\eqref{eq:upper-bound Enu_i}~ that for all $h\in\mathcal{S}_{\textcolor{black}{\tilde{K}_N},M}$,
\begin{equation}
    \label{eq:second risk bound}
    \E\left\|\w{\sigma}^{2}-\sigma^{*2}\right\|^{2}_{n,N}\leq\underset{h\in\mathcal{S}_{\textcolor{black}{\tilde{K}_N},M}}{\inf}{\|h-\sigma^{*2}\|^{2}_{n}}+C\sqrt{\frac{(\textcolor{black}{\tilde{K}_N}+M)\log^{3}(N)}{Nn}}+2\E\left[\mu(\w{\sigma}^{2}-h)\right].
\end{equation}
It remains to obtain an upper bound of the term $\mu(\w{\sigma}^{2}-h)$. {Notice that for $a>0$, $x$ and $y\in \R$, 
\[2xy=2\frac{x}{\sqrt{a}} \times \sqrt{a}y\leq \frac{x^2}{a}+ a y^2.\]
}Then, for all $h\in\mathcal{S}_{\textcolor{black}{\tilde{K}_N},M}$ and $a>0$,
\begin{align*}
    2\mu\left(\w{\sigma}^{2}-h\right)&\leq\frac{2}{a}\left\|\w{\sigma}^{2}-\sigma^{*2}\right\|^{2}_{n,N}+\frac{2}{a}\left\|h-\sigma^{*2}\right\|^{2}_{n,N}+\frac{a}{Nn}\sum_{j=1}^{N}{\sum_{k=0}^{n-1}{\left(R^{j}_{k\Delta}\right)^2}}.
\end{align*}
We set $a=4$ and from Equation \eqref{eq:second risk bound} we deduce that,
\begin{equation}
\label{eq:third risk bound}
    \E\left\|\w{\sigma}^{2}-\sigma^{*2}\right\|^{2}_{n,N}\leq 3\underset{h\in\mathcal{S}_{\textcolor{black}{\tilde{K}_N},M}}{\inf}{\|h-\sigma^{*2}\|^{2}_{n}}+C\sqrt{\frac{(\textcolor{black}{\tilde{K}_N}+M)\log^{3}(N)}{Nn}}+\frac{4}{Nn}\sum_{j=1}^{N}{\sum_{k=0}^{n-1}{\E\left[\left(R^{j}_{k\Delta}\right)^2\right]}}.
\end{equation}
We have 
\begin{align*}
    \E\left[\left(R^{j}_{k\Delta}\right)^2\right]\leq 3\left(\E\left[\left(R^{j,1}_{k\Delta}\right)^2\right]+\E\left[\left(R^{j,2}_{k\Delta}\right)^2\right]+\E\left[\left(R^{j,3}_{k\Delta}\right)^2\right]\right)
\end{align*}
where for all $j\in[\![1,N]\!]$ and $k\in[\![0,n-1]\!]$, $R^{j,1}_{k\Delta}, R^{j,2}_{k\Delta}$ and $R^{j,3}_{k\Delta}$ are given in Equations~\eqref{eq:Residuals-R1R2} and \eqref{eq:Residuals-R3}. There exist constants $C_1,C_2,C_3>0$ such that

\begin{align*} 
\E\left[\left(R^{j,1}_{k\Delta}\right)^2\right] &\leq \E\left[\left(\int_{k\Delta}^{(k+1)\Delta}{b^{*2}_{Y}\left(X^{j}_{k\Delta}\right)ds}\right)^2\right]\leq\Delta\E\left[\int_{k\Delta}^{(k+1)\Delta}{b^{*4}_{Y}\left(X^{j}_{k\Delta}\right)ds}\right]\leq C_1\Delta^2 \\ 
\E\left[\left(R^{j,2}_{k\Delta}\right)^2\right] &\leq \frac{1}{\Delta^2}\int_{k\Delta}^{(k+1)\Delta}{((k+1)\Delta-s)^2ds}\int_{k\Delta}^{(k+1)\Delta}{\E\left[\phi^{2}_{Y}\left(X^{j}_s\right)\right]ds}\leq C_2\Delta^2 \\
\E\left[\left(R^{j,3}_{k\Delta}\right)^2\right] &\leq \frac{4}{\Delta^2}\E\left[\Delta\int_{k\Delta}^{(k+1)\Delta}{L^{2}_{0}\left|X^{j}_s-X^{j}_{k\Delta}\right|^2ds}\left(\int_{k\Delta}^{(k+1)\Delta}{\sigma^{*}(X^{j}_s)dW_s}\right)^2\right]\leq C_{3}\Delta^2.
\end{align*}
We deduce from Equation~\eqref{eq:third risk bound}~ that there exists a constant $C>0$ depending on $\sigma^{*}_{1}$ and $M$ such that,
\begin{equation*}
    \E\left\|\w{\sigma}^{2}-\sigma^{*2}\right\|^{2}_{n,N}\leq 3\underset{h\in\mathcal{S}_{\textcolor{black}{\tilde{K}_N},M}}{\inf}{\|h-\sigma^{*2}\|^{2}_{n}}+C\left(\sqrt{\frac{\textcolor{black}{\tilde{K}_N}\log^{3}(N)}{Nn}}+\Delta_n^2\right).
\end{equation*}This is the announced result.
\end{proof}

\begin{proof}[\textbf{Proof of Theorem~\ref{thm:consistency}}]
\textcolor{black}{$i \in \mathcal{Y}$, define once again $\E_i = \E[. | \one_{Y_1}, \ldots, \one_{Y_N = i}]$. On the event $\{N_i > 1\}$}, we have \textcolor{black}{for all $A_i > 0$}
\begin{eqnarray}
\E_{\textcolor{black}{i}}\left[\left\|\w{b}_i-b_i^*\right\|^2_{n}\right]
 &=&  \E_{\textcolor{black}{i}}\left[\frac{1}{n}\sum_{k=0}^{n-1} \left(\w{b}_i(X_{k\Delta})-b_i^*(X_{k\Delta})\right)^2\right]\nonumber\\
 &=&  \E_{\textcolor{black}{i}}\left[\frac{1}{n}\sum_{k=0}^{n-1} \left(\w{b}_i(X_{k\Delta})-b_i^*(X_{k\Delta})\right)^2\one_{\{\left|X_{k\Delta}\right|\leq \textcolor{black}{A_{i}}\}}\right]  \nonumber \\
 &&+ \E_{\textcolor{black}{i}}\left[\frac{1}{n}\sum_{k=0}^{n-1} \left(\w{b}_i(X_{k\Delta})-b_i^*(X_{k\Delta})\right)^2\one_{\{\left|X_{k\Delta}\right|> \textcolor{black}{A_{i}}\}}\right].\label{eq:eqConsist1}
\end{eqnarray}
We bound each term of the {\it r.h.s.} of the above inequality.
From Lemma~\ref{lem:controleSortiCompact}, and Cauchy-Schwarz Inequality, under Assumption~\ref{ass:RegEll}, we have for the second term of \eqref{eq:eqConsist1},
\begin{equation}
\label{eq:eqConsist2}
\E_{\textcolor{black}{i}}\left[\frac{1}{n}\sum_{k=0}^{n-1} \left(\w{b}_i(X_{k\Delta})-b_i^*(X_{k\Delta})\right)^2\one_{\{\left|X_{k\Delta}\right|> \textcolor{black}{A_{i}}\}}\right] \leq C \sqrt{\exp\left(-C \textcolor{black}{A^{2}_{i}}\right)}.    
\end{equation}
For the first term of \eqref{eq:eqConsist1}, we observe that
\begin{eqnarray*}
\E_{\textcolor{black}{i}}\left[\frac{1}{n}\sum_{k=0}^{n-1} \left(\w{b}_i(X_{k\Delta})-b_i^*(X_{k\Delta})\right)^2\one_{\{\left|X_{k\Delta}\right|\leq \textcolor{black}{A_{i}}\}} | \mathcal{D}_N\right] &=& \int_{-\textcolor{black}{A_{i}}}^{\textcolor{black}{A_{i}}} \left(\w{b}_i(x)-b_i^*(x)\right)^2 
\left(\frac{1}{n}\sum_{k=1}^{n-1}p(k \Delta,x)\right) \mathrm{d}x \\
&&+ {\frac{1}{n}}\left(\w{b}_i(0)-b_i^*(0)\right)^2.
\end{eqnarray*}
\textcolor{black}{For $A_{i} = (\log(N_i))^{1/4}$ and from} Lemma~\ref{lem:boundDensity}, we then deduce that
\begin{multline*}
 \E\left[\frac{1}{n}\sum_{k=0}^{n-1} \left(\w{b}_i(X_{k\Delta})-b_i^*(X_{k\Delta})\right)^2\textcolor{black}{\one_{N_i > 1}}\one_{\{\left|X_{k\Delta}\right|\leq \textcolor{black}{A_{i}}\}} | \mathcal{D}_N\right] \\
 \leq   C_1 e^{C_2 \textcolor{black}{\sqrt{\log(N)}}} \E\left[\frac{1}{n}\sum_{k=0}^{n-1} \left(\w{b}_i(X_{k\Delta})-b_i^*(X_{k\Delta})\right)^2\one_{\{\left|X_{k\Delta}\right|\leq \textcolor{black}{A_{i}}\}} | \mathcal{D}_N, Y = i\right] \\ 
 \leq  
C_1e^{C_2 \textcolor{red}{\sqrt{\log(N)}}} \E\left[\left\|\w{b}_i-b_i^*\right\|^2_{n,i}\right].   
\end{multline*}
From the above key equation, Equation~\eqref{eq:eqConsist1}, Equation~\eqref{eq:eqConsist2}, and Theorem~\ref{thm:cveDriftSig}, we deduce,
\begin{equation*}
\E\left[\left\|\w{b}_i-b_i^*\right\|^2_{n}\right] \leq 
C\left(\exp\left(C_2 \textcolor{black}{\sqrt{\log(N)}}\right)\left(\frac{\log(N)^4}{N}\right)^{1/5} + \textcolor{black}{\E\left[\exp(-C_2 \sqrt{\log(N_i)})\one_{N_i > 1}\right]}\right).
\end{equation*}
\textcolor{black}{Since $\exp(-C_2 \sqrt{\log(N_i)})\one_{N_i > 1} \longrightarrow 0 ~ a.s.$ as $N \rightarrow \infty$, and $\exp(-C_2 \sqrt{\log(N_i)}) \leq 1$ for almost all $N_i > 1$, the theorem of dominated convergence implies}
\begin{equation*}
    \textcolor{black}{\E\left[\exp(-C_2 \sqrt{\log(N_i)})\one_{N_i > 1}\right] \longrightarrow 0 ~~ \mathrm{as} ~~ N \rightarrow \infty.}
\end{equation*}
Besides from Theorem~\ref{thm:cveDriftSig}, we also have
\begin{equation*}
\E\left[\left\|\w{\sigma}^2-\sigma^{2*}\right\|_{n}\right] \leq 
\left(\frac{\log(N)^4}{N}\right)^{1/5}. 
\end{equation*}
Therefore, applying Theorem~\ref{thm:comparisonInequality} with ${b}_{{\rm \max}} = \log(N)^{3/2}$, ${\sigma}^{-2}_0 = \log(N)$, we get the desired result. 
\end{proof}

\begin{proof}[\textbf{Proof of Proposition~\ref{prop:eqNorm}~}]
    For all $i\in\cY$, let $\P_i=\P(.|Y=i)$ and denote by $\P_0$ the probability measure under which the diffusion process $X=(X_t)_{t\geq 0}$ is solution of $dX_t=d\widetilde{W}_t$ where $\widetilde{W}$ is a Brownian motion under $\P_0$. We deduce from the Girsanov's Theorem (see e.g. \cite{jacod2013limit}, Chapter III) that
\begin{equation*}
    \forall i\in\cY, \ \forall t\in[0,1], \ \ \frac{d{\P_i}}{d{\P_0}}|_{\mathcal{F}^{X}_{t}}=\exp\left(\int_{0}^{t}{b^{*}_{i}(X_s)dX_s}-\frac{1}{2}\int_{0}^{t}{b^{*2}_{i}(X_s)ds}\right),
\end{equation*}where $(\mathcal{F}^X_t)_{t\in [0,1]}$ is the natural filtration of $X$. Then, for all $i,j\in\cY$ such that $i\neq j$,
\begin{equation}
\label{eq:RatioProba}
     \forall t\in[0,1], \ \ \frac{d{\P_i}}{d{\P_j}}|_{\mathcal{F}^{X}_{t}}=\exp\left(\int_{0}^{t}{(b^{*}_{i}-b^{*}_{j})(X_s)dX_s}-\frac{1}{2}\int_{0}^{t}{(b^{*2}_{i}-b^{*2}_{j})(X_s)ds}\right)\leq C\exp\left(M^{i,j}_t\right)
\end{equation}
where the constant $C$ depends on $C_{b^{*}}$ given in Assumption \ref{ass:boundedDrift} and 
$$ \forall i,j\in\cY:i\neq j, \ \ M^{i,j}_{t}=\int_{0}^{t}{(b^{*}_{i}-b^{*}_{j})(X_s)dW_s}, \ \ t\in[0,1].$$ 
%
Then, for all $i,j\in\cY$ such that $i\neq j$ and for all $a>0$, \textcolor{black}{since $\left\|\w{b}_i - b^{*}_{i}\right\|^{2}_{\infty} \leq 2A^2\log(N)$, and} using Equation~\eqref{eq:RatioProba} we have
\begin{align*}
    \textcolor{black}{\left\|\w{b}_i - b^{*}_{i}\right\|^{2}_{n,i}} & \textcolor{black}{ = \frac{1}{n}\sum_{k=0}^{n-1}{\E_{X|Y=i}\left[\left(\w{b}_i - b^{*}_{i}\right)^{2}(X_{k\Delta})\right]} = \frac{1}{n}\sum_{k=0}^{n-1}{\E_{X|Y=j}\left[\left(\w{b}_i - b^{*}_{i}\right)^{2}(X_{k\Delta})\frac{d\P_i}{dP_j}|\mathcal{F}^{X}_{k\Delta}\right]}} \\
    & \textcolor{black}{\leq \frac{C}{n}\sum_{k=0}^{n-1}{\E_{X|Y=j}\left[\left(\w{b}_i - b^{*}_{i}\right)^{2}(X_{k\Delta})\exp\left(M^{i,j}_{k\Delta}\right)\right]}} \\
    & \textcolor{black}{ \leq C\exp(a)\left\|\w{b}_i - b^{*}_{i}\right\|^{2}_{n,j} + CA^2\log(N)\E_{X|Y=j}\left[\exp\left(M^{i,j}_{k\Delta}\right)\one_{M^{i,j}_{k\Delta} > a}\right]}
\end{align*}
Using the Cauchy-Schwarz inequality and \textit{Lemma 2.1 in \cite{van1995exponential}}, there exist constants $C>0$ and $c>0$ depending on $C_{\mathbf{b}^{*}}$ such that,
\begin{align*}
    \E\left[\exp\left(M^{i,j}_{t}\right)\one_{M^{i,j}_{t}>a}\right] \leq &~ 
    \sqrt{\P\left(M^{i,j}_{t} > a\right)} \sqrt{\E\left[\exp\left(2M^{i,j}_{t}-2\left<M^{i,j},M^{i,j}\right>_t\right) \exp\left(2\left<M^{i,j},M^{i,j}\right>_t\right)\right]} \\
    \leq &~ C\textcolor{black}{\exp(-a^2/2c)}\sqrt{\E\left[\exp\left(2M^{i,j}_{t}-2\left<M^{i,j},M^{i,j}\right>_t\right)\right]}
\end{align*}
where $\P\left(M^{i,j}_{t} > a\right) \leq \exp(-a^2/c)$ (\cite{van1995exponential}) and   $\exp\left(2\left<M^{i,j},M^{i,j}\right>_t\right) < \infty \ a.s$ since the drift functions are bounded. Moreover, since $(M^{i,j}_{t})_{t\leq 1}$ is a martingale and $$\E\left[\exp\left(\left<M^{i,j},M^{i,j}\right>_{1}\right)\right]  
<\infty,$$
according to the Novikov assumption, thus
$\mathcal{E}(M^{i,j}):=\left\{\exp\left(2M^{i,j}_{t}-2\left<M^{i,j},M^{i,j}\right>_t\right)\right\}_{t\leq 1}$ is a martingale with respect to the natural filtration $\mathcal{F}^{M}$ of $M^{i,j}$ (see \cite{le2013mouvement}, Proposition 5.8 and Theorem 5.9).
We deduce that for all $t\in[0,1]$,  
\begin{align*}
    \E\left[\exp\left(2M^{i,j}_{t}-2 \left<M^{i,j},M^{i,j}\right>_t\right)\right]=&~\E\left[\E\left(\mathcal{E}(M^{i,j})_t|\mathcal{F}^{M}_{0}\right)\right]=\E\left[\exp\left(2M^{i,j}_{0}-2\left<M^{i,j},M^{i,j}\right>_0\right)\right]=1.
\end{align*}
Thus, for all $a>0$, we obtain $\E\left[\exp\left(M^{i,j}_{t}\right)\one_{M^{i,j}_{t}>a}\right]\leq C\exp(-a^2/c)$. Finally, set $a=\sqrt{c\log(N)}$, it follows that for all $i,j\in\cY$ such that $i\neq j$, there exists a constant $C>0$ such that
\begin{equation*}
    \textcolor{black}{\left\|\w{b}_i - b^{*}_{i}\right\|^{2}_{n,j} \leq C\exp\left(\sqrt{c\log(N)}\right)\left\|\w{b}_i - b^{*}_{i}\right\|^{2}_{n,i}+C\frac{A^2\log(N)}{N}.}
\end{equation*}
\end{proof}

\begin{proof}[\textbf{Proof of Theorem~\ref{thm:boundedDrift}~}]
   From Theorem~\ref{thm:comparisonInequality}, and its assumptions, we have
\begin{equation*}
   \E\left[\mathcal{R}(\w{g}) - \mathcal{R}(g^*)\right] \leq C \left(\sqrt{\Delta} + \frac{1}{\mfp^{*}_0 \sqrt{N}} 
    + \E\left[{b}_{\rm max}{\sigma}^{-2}_0  \sum_{i =1}^K \|\w{b}_i -b_i^*\|_n\right] + \E\left[{\sigma}^{-2}_0 \|\w{\sigma}^2-\sigma^{2*}\|_n\right]\right).
\end{equation*}
For all $i\in\cY$
we obtain from Proposition~\ref{prop:eqNorm} \textcolor{black}{with $A_{N_i} = \log(N_i) \leq \log(N)$} that there exist constants $C_1,c>0$ such that
\begin{equation*}
    \E\left[\left\|\w{b}_i-b^{*}_{i}\right\|_n\right] = \sum_{j=1}^{K}{\mfp^{*}_{j}\E\left[\left\|\w{b}_i-b^{*}_{i}\right\|_{n,j}\right]}\leq~C_1\exp\left(\sqrt{c\log(N)}\right)\E\left[\left\|\w{b}_i-b^{*}_{i}\right\|_{n,i}\right]+C_1\frac{\textcolor{black}{\log^{3}(N)}}{N}.
\end{equation*}
Then, from Theorem~\ref{thm:cveDriftSig} \textcolor{black}{with $A_{N_i} = \log(N_i), ~ K_{N_i} \propto (N_i\log(N_i))^{1/5}$ on the event $\{N_i > 1\}$ for each $i \in \mathcal{Y}$, and $\tilde{A}_N = \log(N)$, $\tilde{K}_N \propto (N\log(N))^{1/5}$ and $\Delta = \mathrm{O}(1/N)$}, there exist constants $C_2,C_3>0$ such that
\begin{equation*}
    \forall i\in\cY, \ \E\left\|\w{b}_i-b^{*}_{i}\right\|_{n,i}\leq~C_2\left(\frac{\log^{4}(N)}{N}\right)^{1/5}, \ \ \mathrm{and} \ \ \E\left\|\w{\sigma}^{2}-\sigma^{*2}\right\|_n\leq~C_3\left(\frac{\log^{4}(N)}{N}\right)^{1/5}.
\end{equation*}
Finally, by \eqref{eq:ordre-exp-sqrt-log}, we deduce that there exist constants $C, c>0$ such that
\begin{equation*}
   \E\left[\mathcal{R}(\w{g}) - \mathcal{R}(g^*)\right] \leq C \exp\left(\sqrt{c\log(N)}\right)N^{-1/5}.
\end{equation*}
\end{proof}

Let us now turn to the proof of Theorem~\ref{thm:LossErrorDrift}. 
%
We have the following lemma.
\begin{lemme}
\label{lm:proba-complementary-omega}
Let $\beta\geq 1$ be a real number and suppose that $K_{N_i}=\mathrm{O}\left(\log^{-5/2}(N_i)N^{1/(2\beta+1)}_{i}\right)$ with $N_i \ a.s$ large enough, and $A_{N_i}=\sqrt{\frac{3\beta}{2\beta+1}\log(N_i)}$. Under Assumption~\ref{ass:RegEll}, the following holds:
\begin{align*}
\P_i\left(\Omega^{c}_{n,N_i,K_{N_i}}\right)\leq c\frac{K_{N_i}}{N_i}
\end{align*}
where $c>0$ is a constant.
\end{lemme}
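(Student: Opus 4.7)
The plan is to identify $\Omega_{n,N_i,K_{N_i}}$ with the event on which the empirical Gram matrix of the $B$-spline basis is close to its deterministic counterpart $\Psi_{K_{N_i}}$ from Equation~\eqref{eq:psimatrix}, and then to estimate the probability of its complement via a matrix Bernstein inequality combined with the upper bound on $\|\Psi_{K_{N_i}}^{-1}\|_{\mathrm{op}}$ provided by Lemma~\ref{lm:MinEigenValue}. Throughout the proof we work conditionally on $(\one_{\{Y_1=i\}},\ldots,\one_{\{Y_N=i\}})$, so that $N_i$ is treated as a deterministic integer.

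Set $\mathbf{B}(x) := (B_{-M}(x),\ldots,B_{K_{N_i}-1}(x))^\top$ and introduce the empirical Gram matrix $\w{\Psi}_{K_{N_i}} := (nN_i)^{-1}\sum_{j\in\mathcal{I}_i}\sum_{k=0}^{n-1}\mathbf{B}(X^j_{k\Delta})\mathbf{B}(X^j_{k\Delta})^\top$. The event $\Omega_{n,N_i,K_{N_i}}$ is then
\begin{equation*}
\Omega_{n,N_i,K_{N_i}} = \bigl\{\|\Psi_{K_{N_i}}^{-1/2}(\w{\Psi}_{K_{N_i}}-\Psi_{K_{N_i}})\Psi_{K_{N_i}}^{-1/2}\|_{\mathrm{op}} \leq 1/2\bigr\},
\end{equation*}
which guarantees the two-sided equivalence $\tfrac{1}{2}\|t\|_{n,i}^2 \leq \|t\|_{n,N_i}^2 \leq \tfrac{3}{2}\|t\|_{n,i}^2$ for every $t\in\mathcal{S}_{K_{N_i},M}$. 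Writing $\Psi_{K_{N_i}}^{-1/2}(\w{\Psi}_{K_{N_i}}-\Psi_{K_{N_i}})\Psi_{K_{N_i}}^{-1/2} = N_i^{-1}\sum_{j\in\mathcal{I}_i}\widetilde{Z}_j$, the $\widetilde{Z}_j$ form an i.i.d. sequence of centered symmetric matrices of size $K_{N_i}+M$ to which a matrix Bernstein inequality will be applied.

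I would then estimate the two parameters entering matrix Bernstein. The partition-of-unity property of the $B$-spline basis gives $\|\mathbf{B}(x)\|_2^2\leq M+1$, hence $\|\mathbf{B}(x)\mathbf{B}(x)^\top\|_{\mathrm{op}}\leq M+1$, so the almost sure bound is $\|\widetilde{Z}_j\|_{\mathrm{op}} \leq 2(M+1)\|\Psi_{K_{N_i}}^{-1}\|_{\mathrm{op}} =: R$. For the variance, the algebraic identity $(\mathbf{B}\mathbf{B}^\top)^2 = \|\mathbf{B}\|_2^2\,\mathbf{B}\mathbf{B}^\top \preceq (M+1)\mathbf{B}\mathbf{B}^\top$, combined with the operator Jensen inequality $(n^{-1}\sum_k A_k)^2 \preceq n^{-1}\sum_k A_k^2$ for self-adjoint $A_k$, yields $\mathbb{E}[\widetilde{Z}_j^2] \preceq (M+1)\,I$ after conjugation by $\Psi_{K_{N_i}}^{-1/2}$ and cancellation of one power of $\Psi_{K_{N_i}}$. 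The Bernstein variance parameter is thus $\sigma^2 := \|N_i^{-2}\sum_j \mathbb{E}[\widetilde{Z}_j^2]\|_{\mathrm{op}} \leq (M+1)/N_i$, and applying matrix Bernstein with $t=1/2$ yields
\begin{equation*}
\P_i(\Omega^{c}_{n,N_i,K_{N_i}}) \leq 2(K_{N_i}+M)\exp\!\left(-\frac{c\,N_i}{1+\|\Psi_{K_{N_i}}^{-1}\|_{\mathrm{op}}}\right),
\end{equation*}
for a universal constant $c>0$.

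Inserting the bound of Lemma~\ref{lm:MinEigenValue}, with $A_{N_i}^2 = \frac{3\beta}{2\beta+1}\log(N_i)$ so that $\exp(2A_{N_i}^2/3) = N_i^{2\beta/(2\beta+1)}$, and $K_{N_i}\propto \log^{-5/2}(N_i)N_i^{1/(2\beta+1)}$, a direct calculation gives $\|\Psi_{K_{N_i}}^{-1}\|_{\mathrm{op}} = O(N_i/\log^2(N_i))$; hence $N_i/(1+\|\Psi_{K_{N_i}}^{-1}\|_{\mathrm{op}}) \gtrsim \log^2(N_i)$, and the right-hand side decays faster than any polynomial in $N_i$, in particular below $c K_{N_i}/N_i$ for $N_i$ large enough. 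The main obstacle lies in obtaining the sharp variance bound $\mathbb{E}[\widetilde{Z}_j^2]\preceq (M+1)I$: a crude estimate that simply uses the almost sure bound would produce a spurious factor $\|\Psi_{K_{N_i}}^{-1}\|_{\mathrm{op}}$ which, with $A_{N_i}^2$ of logarithmic order, would make the exponential in the Bernstein bound trivial. The identity $(\mathbf{B}\mathbf{B}^\top)^2 = \|\mathbf{B}\|_2^2\,\mathbf{B}\mathbf{B}^\top$, specific to the $B$-spline structure, is precisely what allows one power of $\Psi_{K_{N_i}}$ to be absorbed and makes the argument close.
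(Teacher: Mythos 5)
Your overall plan matches the paper's: identify $\Omega_{n,N_i,K_{N_i}}$ with an operator-norm deviation of the normalized empirical Gram matrix from the identity, apply a matrix concentration inequality, and feed the resulting exponent with the bound on $\|\Psi_{K_{N_i}}^{-1}\|_{\mathrm{op}}$ from Lemma~\ref{lm:MinEigenValue} together with the prescribed choices of $A_{N_i}$ and $K_{N_i}$. The paper reaches the concentration step differently: it first passes to a Lebesgue-orthonormal basis $(\theta_{\ell})$ via Gram--Schmidt, so the bound is expressed through the Christoffel-type quantity $\mathcal{L}(K_{N_i}+M)$ and $\|\Phi_{K_{N_i}}^{-1}\|_{\mathrm{op}}$, and then invokes the matrix Chernoff bound of Cohen, Davenport and Leviatan before converting back to $\|\Psi_{K_{N_i}}^{-1}\|_{\mathrm{op}}$; you skip the orthonormalization and apply matrix Bernstein directly in the $B$-spline basis. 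Your route is a bit shorter and gives a marginally better exponent (a $\log^2 N_i$ instead of the paper's $\log^{3/2} N_i$), which of course is immaterial for the stated conclusion.

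One intermediate step is not correct as written. With $\widetilde{Z}_j = \Psi_{K_{N_i}}^{-1/2}(P_j-\Psi_{K_{N_i}})\Psi_{K_{N_i}}^{-1/2}$, $P_j := n^{-1}\sum_k \mathbf{B}(X^j_{k\Delta})\mathbf{B}(X^j_{k\Delta})^\top$, you have
\begin{equation*}
\widetilde{Z}_j^2 = \Psi_{K_{N_i}}^{-1/2}(P_j-\Psi_{K_{N_i}})\,\Psi_{K_{N_i}}^{-1}\,(P_j-\Psi_{K_{N_i}})\Psi_{K_{N_i}}^{-1/2},
\end{equation*}
and the inner $\Psi_{K_{N_i}}^{-1}$ is not removed by the rank-one identity $(\mathbf{B}\mathbf{B}^\top)^2=\|\mathbf{B}\|_2^2\,\mathbf{B}\mathbf{B}^\top$ combined with operator Jensen. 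What those ingredients actually give is $P_j\Psi_{K_{N_i}}^{-1}P_j \preceq \|\Psi_{K_{N_i}}^{-1}\|_{\mathrm{op}}\,P_j$, hence $\E[\widetilde{Z}_j^2]\preceq \|\Psi_{K_{N_i}}^{-1}\|_{\mathrm{op}}\,I$, not $\preceq (M+1)I$: the variance parameter is of the same order as the almost-sure bound, not smaller. This does not break the proof, since the Bernstein exponent is governed by the larger of the two parameters and your displayed bound $2(K_{N_i}+M)\exp\bigl(-cN_i/(1+\|\Psi_{K_{N_i}}^{-1}\|_{\mathrm{op}})\bigr)$ is what comes out in either case; with $\|\Psi_{K_{N_i}}^{-1}\|_{\mathrm{op}}\lesssim N_i/\log^2(N_i)$ (guaranteed by the choices of $A_{N_i}$ and $K_{N_i}$), the right-hand side is eventually below $cK_{N_i}/N_i$. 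But the closing remark — that the sharp variance bound $\E[\widetilde{Z}_j^2]\preceq(M+1)I$ is the key that "makes the argument close" — is a false lead: the argument closes because the parameter choice makes $\|\Psi_{K_{N_i}}^{-1}\|_{\mathrm{op}} = o(N_i)$ with a logarithmic margin, not because the $\|\Psi_{K_{N_i}}^{-1}\|_{\mathrm{op}}$ factor can be made to disappear from the variance.
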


\subsection{Proofs of Section~\ref{sec:ratesKnownSigma} }

\begin{proof}[\textbf{Proof of Theorem \ref{thm:LossErrorDrift}}]
Note that throughout the proof we work conditional on the random variables $\one_{Y_1=i},\cdots,\one_{Y_N=i}$ and on the event $\{N_i>1\}$, so that $N_i$ can be viewed as a deterministic variable. Then, to alleviate the notations, let use denote
$$\P_i:=\P(.|\one_{Y_1=i},\cdots,\one_{Y_N=i}) \ \ \mathrm{and} \ \ \E_i=\E[.|\one_{Y_1=i},\cdots,\one_{Y_N=i}].$$
For each class $i\in\mathcal{Y}$, the drift function $b^{*}_{i}$ is the solution of the following regression model
\begin{equation*}
Z^{j}_{k\Delta}=b^{*}_{i}(X^{j}_{k\Delta})+\xi^{j}_{k\Delta}+R^{j}_{k\Delta}, \ \ j\in \mathcal{I}_i, \ \ k\in[\![0,n-1]\!]
\end{equation*}
where we recall that $\mathcal{I}_i$ is the set of indices $j$ such that $Y_j=i$, and 
\begin{equation}
\label{eq:ErrorDrift}
    \xi^{j}_{k\Delta}:=\frac{1}{\Delta}\int_{k\Delta}^{(k+1)\Delta}{\sigma^{*}(X^{j}_{s})dW^{j}_{s}}, \ \ \ \ R^{j}_{k\Delta}:=\frac{1}{\Delta}\int_{k\Delta}^{(k+1)\Delta}{(b^{*}_{i}(X^{j}_{s})-b^{*}_{i}(X^{j}_{k\Delta}))ds}.
\end{equation}
We first focus on the error $\E_i\left[\left\|\w{b}_i-b^{*}_{A_{N_i},i}\right\|^{2}_{n,N_i}\right]$ for each label $i\in\cY$. Therefore, we consider the following decomposition:
\begin{equation}
\label{eq:DecompRisk}
\E_i\left[\left\|\w{b}_i-b^{*}_{A_{N_i},i}\right\|^{2}_{n,N_i}\right]=\E_i\left[\left\|\w{b}_i-b^{*}_{A_{N_i},i}\right\|^{2}_{n,N_i}\one_{\Lambda_i}\right]+\E_i\left[\left\|\w{b}_i-b^{*}_{A_{N_i},i}\right\|^{2}_{n,N_i}\one_{\Lambda^{\prime}_i}\right]   
\end{equation}
where 
\begin{equation*}
\Lambda_i=\Omega_{n,N_i,K_{N_i}} \ \ \mathrm{and} \ \  \Lambda^{\prime}_i=\Omega^{c}_{n,N_i,K_{N_i}}.
\end{equation*}

\paragraph{Upper bound of $\E_i\left[\left\|\w{b}_i-b^{*}_{A_{N_i},i}\right\|^{2}_{n,N_i}\one_{\Lambda_i}\right]$.}
From the proof of Proposition 4.4 in \cite{denis2020ridge}, Equation (D.5), we see that for all $h\in\mathcal{S}_{K_{N_i},M}$ and for all $a,d>0$, we have on the event $\Lambda_i=\Omega_{n,N_i,K_{N_i}}$,
\begin{align*}
    \left(1-\dfrac{2}{a}-\dfrac{4}{d}\right)\left\|\w{b}_i-b^{*}_{A_{N_i},i}\right\|^{2}_{n,N_i}\leq&~\left(1+\dfrac{2}{a}+\dfrac{4}{d}\right)\left\|h-b^{*}_{A_{N_i},i}\right\|^{2}_{n,N_i}+d\underset{\left\{h\in\mathcal{S}_{K_{N_i},M}, \|h\|_{n,i}=1\right\}}{\sup}{\nu^{2}\left(t\right)}+aC\Delta
\end{align*}
where $C>0$ is a constant and where for all $h\in\mathcal{S}_{K_{N_i},M}$,
\begin{equation}
\label{eq:definition-nu-mu}
    \nu(h)=\frac{1}{N_in}\sum_{j \in I_i}{\sum_{k=0}^{n-1}{h(X^{j}_{k\Delta})\xi^{j}_{k\Delta}}}.
\end{equation}
We set $a=d=8$, and we obtain,
\begin{eqnarray*}
\E_i\left[\left\|\w{b}_{i}-b^{*}_{A_{N_i},i}\right\|^{2}_{n,N_i}\one_{\Lambda_i}\right]&\leq & ~7\underset{h\in\mathcal{S}_{K_{N_i},M}}{\inf}{\left\|h-b^{*}_{A_{N_i},i}\right\|^{2}_{n,i}}+32\E_i\left[\underset{\left\{h\in\mathcal{S}_{K_{N_i},M}, \|h\|_{n,i}=1\right\}}{\sup}{\nu^{2}\left(h\right)}\right]\\
   && +32C\Delta.
\end{eqnarray*}
For $h\in\mathcal{S}_{K_{N_i},M}$, $h=\sum_{\ell=-M}^{K_{N_i}-1}{w_{\ell}B_{\ell,M,\mathbf{u}}}$ and $\|h\|^{2}_{n,i}=w^{\prime}\Psi^i_{K_{N_i}}w$ equals to one here, then $w=\Psi^{-1/2}_{K_{N_i}}u$ where the vector $u$ satisfies $\|u\|_{2,K_{N_i}+M}=1$. Finally, one obtains, 
\begin{equation}
    \label{relation t-psi matrix}
    h=\sum_{\ell=-M}^{K_{N_i}-1}{w_{\ell}B_{\ell,M,\mathbf{u}}}=\sum_{\ell=-M}^{K_{N_i}-1}{u_{\ell}\left(\sum_{\ell^{\prime}=-M}^{K_{N_i}-1}{\left[\Psi^{-1/2}_{K_{N_i}}\right]_{\ell^{\prime},\ell}B_{\ell^{\prime},M,\mathbf{u}}}\right)}.
\end{equation}
For all $h\in\mathcal{S}_{K_{N_i},M}$ such that $\|h\|_{n,i}=1$, using Equation~\eqref{eq:definition-nu-mu}~and~\eqref{relation t-psi matrix}, gives
\begin{align*}
\nu^{2}(h)&=\left(\sum_{\ell=-M}^{K_{N_i}-1}{u_{\ell}\frac{1}{N_in}\sum_{j=1}^{N_i}{\sum_{k=0}^{n-1}{\sum_{\ell^{\prime}=-M}^{K_{N_i}-1}{\left[\Psi^{-1/2}_{K_{N_i}}\right]_{\ell^{\prime},\ell}B_{\ell^{\prime},M,\mathbf{u}}(X^{i_j}_{k\Delta})\xi^{i_j}_{k\Delta}}}}}\right)^2.
\end{align*}
Cauchy-Schwarz inequality together with $\|u\|_2=1$, produce
\begin{align*}
\nu^{2}(h)&\leq\sum_{\ell=-M}^{K_{N_i}-1}{\left(\frac{1}{N_in}\sum_{j=1}^{N_i}{\sum_{k=0}^{n-1}{\sum_{\ell^{\prime}=-M}^{K_{N_i}-1}{\left[\Psi^{-1/2}_{K_{N_i}}\right]_{\ell^{\prime},\ell}B_{\ell^{\prime},M,\mathbf{u}}(X^{i_j}_{k\Delta})\xi^{i_j}_{k\Delta}}}}\right)^2}.
\end{align*}
Finally we obtain,
\begin{eqnarray*}
\E_i\left[\underset{h\in\mathcal{S}_{K_{N_i},M}, \|h\|_{n,i}=1}{\sup}{\nu^{2}(h)}\right]
&\leq& \frac{1}{N_i}\E_i\left[\frac{1}{n^2}\sum_{\ell=-M}^{K_{N_i}-1}{\left(\sum_{k=0}^{n-1}{\sum_{\ell^{\prime}=-M}^{K_{N_i}-1}{\left[\Psi^{-1/2}_{K_{N_i}}\right]_{\ell^{\prime},\ell}B_{\ell^{\prime},M,\mathbf{u}}(X^{i_1}_{k\Delta})\xi^{i_1}_{k\Delta}}}\right)^2}\right]\\
&=&\frac{1}{N_i}\E_i\left[\frac{1}{n^2}\sum_{\ell=-M}^{K_{N_i}-1}{\sum_{k=0}^{n-1}{\left(\sum_{\ell^{\prime}=-M}^{K_{N_i}-1}{\left[\Psi^{-1/2}_{K_{N_i}}\right]_{\ell^{\prime},\ell}B_{\ell^{\prime},M,\mathbf{u}}(X^{i_1}_{k\Delta})}\right)^2\left(\xi^{i_1}_{k\Delta}\right)^2}}\right].
\end{eqnarray*}
According to Equation~\eqref{eq:ErrorDrift}~ and considering the natural filtration $\left(\mathcal{F}_{t}\right)_{t\geq 0}$ of the Brownian motion, for all $k\in[\![0,n-1]\!]$, we have $\E_i\left(\xi^{i_1}_{k\Delta}|\mathcal{F}_{k\Delta}\right)=0$ and
\begin{align*}
\E_i\left[\left(\xi^{i_1}_{k\Delta}\right)^2|\mathcal{F}_{k\Delta}\right]=\frac{1}{\Delta^2}\E\left[\sigma^{*2}\left(X^{i_1}_{k\Delta}\right)\E\left(\left(\int_{k\Delta}^{(k+1)\Delta}{\sigma^{*}(X^{i_1}_{s})}\right)^2|\mathcal{F}_{k\Delta}\right)\right]\leq\frac{\sigma^{*2}_{1}}{\Delta}.
\end{align*}
By definition of the Gram matrix $\Psi_{K_{N_i}}$, we deduce that
\begin{align*}
    \E_i\left[\underset{h\in\mathcal{S}_{K_{N_i},M}, \|h\|_{n,i}=1}{\sup}{\nu^{2}(h)}\right]\leq & \frac{\sigma^{*2}_{1}}{N_i}\E_i\left[\frac{1}{n}\sum_{\ell=-M}^{K_{N_i}-1}{\sum_{k=0}^{n-1}{\left(\sum_{\ell^{\prime}=-M}^{K_{N_i}-1}{\left[\Psi^{-1/2}_{K_{N_i}}\right]_{\ell^{\prime},\ell}B_{\ell^{\prime},M,\mathbf{u}}(X^{1,i}_{k\Delta})}\right)^2}}\right]\\
    \leq & \frac{\sigma^{*2}_{1}}{N_i}\E_i\left(\sum_{\ell,\ell^{\prime},\ell^{\prime\prime}=-M}^{K_{N_i}-1}{\left[\Psi^{-1/2}_{K_{N_i}}\right]_{\ell^{\prime},\ell}\left[\Psi^{-1/2}_{K_{N_i}}\right]_{\ell^{\prime\prime},\ell}\left[\Psi_{K_{N_i}}\right]_{\ell^{\prime},\ell^{\prime\prime}}}\right)\\
    = & \frac{\sigma^{*2}_{1}}{N_i}\E_i\left(\mathrm{Tr}\left(\Psi^{-1}_{K_{N_i}}\Psi_{K_{N_i}}\right)\right).
\end{align*}
Besides,
$$
\mathrm{Tr}\left(\Psi^{-1}_{K_{N_i}}\Psi_{K_{N_i}}\right)=K_{N_i}+M.
$$
Thus, finally, there exists a constant $C_1>0$ depending on $\sigma^{*}_{1}$ and $M$ such that
\begin{equation*}
\E_i\left[\underset{h\in\mathcal{S}_{K_{N_i},M}, \|h\|_{n,i}=1}{\sup}{\nu^{2}(h)}\right]\leq  C_1\frac{K_{N_i}}{N_i}.
\end{equation*}
Thus, there exists a constant $C>0$ such that,
\begin{equation}
\label{eq:ErrorTerm1}
    \E_i\left[\left\|\w{b}_{i}-b^{*}_{A_{N_i},i}\right\|^{2}_{n,N_i}\one_{\Lambda_i}\right]\leq~7\underset{h\in\mathcal{S}_{K_{N_i},M}}{\inf}{\left\|h-b^{*}_{A_{N_i},i}\right\|^{2}_{n,i}}+C\left(\frac{K_{N_i}}{N_i}+\Delta\right).
\end{equation}

\paragraph{Upper bound of $\E\left[\left\|\w{b}_{i}-b^{*}_{A_{N_i},i}\right\|^{2}_{n,N_i}\one_{\Lambda^{\prime}_i}\right]$.}

Using the Cauchy-Schwarz inequality, we have
\begin{equation*}
    \E_i\left[\left\|\w{b}_{i}-b^{*}_{A_{N_i},i}\right\|^{2}_{n,N_i}\one_{\Lambda^{\prime}_{i}}\right] \leq C_0\log^{2}(N_i)\P_i\left(\Omega^{c}_{n,N_i,K_{N_i}}\right)
\end{equation*}
since for $N$ large enough, using \eqref{eq:boundedbi}, we have,
\begin{align*}
    \left\|\w{b}_{i}-b^{*}_{A_{N_i},i}\right\|^{2}_{n,N_i}\leq &  2\|\w{b}_i\|^{2}_{\infty} + 2\|b^{*}_{A_{N_i},i}\|^{2}_{\infty} \leq 4A^{2}_{N_i}\log(N_i) \leq C_0\log^{2}(N_i)
\end{align*}
where $C_0>0$ is a constant. Using Lemma~\ref{lm:proba-complementary-omega}, we have
\begin{equation}
    \label{eq:ProbaGammaPrime}
    \P_i(\Lambda^{\prime}_{i})=\mathbb{P}_i\left(\Omega^{c}_{n,N_i,K_{N_i}}\right)\leq c\frac{K_{N_i}}{N_i}.
\end{equation}
Then, from Equation~\eqref{eq:ProbaGammaPrime}, there exists a constant $C>0$ such that
\begin{equation}
\label{eq:ErrorTerm2}
    \E_i\left[\left\|\w{b}_{i}-b^{*}_{A_N,i}\right\|^{2}_{n,N_i}\one_{\Lambda^{\prime}_i}\right] \leq C\log^{2}(N_i)\frac{K_{N_i}}{N_i}.
\end{equation}

\paragraph{Upper bound of $\E_i\left[\left\|\w{b}_{i}-b^{*}_{A_{N_i},i}\right\|^{2}_{n,N_i}\right]$.}

From Equations~\eqref{eq:DecompRisk},~\eqref{eq:ErrorTerm1}~and~\eqref{eq:ErrorTerm2}, there exists a constant $C>0$ such that
\begin{equation}
    \label{eq:EmpLossError}
    \E_i\left[\left\|\w{b}_{i}-b^{*}_{A_{N_i},i}\right\|^{2}_{n,N_i}\right]\leq 7\underset{h\in\mathcal{S}_{K_{N_i},M}}{\inf}{\left\|h-b^{*}_{A_{N_i},i}\right\|^{2}_{n,i}}+C\left(\frac{\log^{2}(N_i)K_{N_i}}{N_i}+\Delta\right).
\end{equation}

\paragraph{Upper bound of $\E_i\left[\left\|\w{b}_{i}-b^{*}_{A_{N_i},i}\right\|^{2}_{n,i}\right]$.}

Using Equation~\eqref{eq:EmpLossError}, we have
\begin{align*}
    \E_i\left[\left\|\w{b}_{i}-b^{*}_{A_{N_i},i}\right\|^{2}_{n,i}\right] =&~ \E_i\left[\left\|\w{b}_{i}-b^{*}_{A_{N_i},i}\right\|^{2}_{n,i}\right] - 2\E_i\left[\left\|\w{b}_{i}-b^{*}_{A_{N_i},i}\right\|^{2}_{n,N_i}\right]\\
    &+2\E_i\left[\left\|\w{b}_{i}-b^{*}_{A_{N_i},i}\right\|^{2}_{n,N_i}\right]\\
    \leq&~ \E_i\left[\left\|\w{b}_{i}-b^{*}_{A_{N_i},i}\right\|^{2}_{n,i}\right]-2\E_i\left[\left\|\w{b}_{i}-b^{*}_{A_{N_i},i}\right\|^{2}_{n,N_i}\right]\\
    &+7\underset{h\in\mathcal{S}_{K_{N_i},M}}{\inf}{\left\|h-b^{*}_{A_{N_i},i}\right\|^{2}_{n,i}}+C\left(\frac{\log^{2}(N_i)K_{N_i}}{N_i}+\Delta\right).
\end{align*}
From the proof of Theorem~\ref{thm:cveDriftSig}, we deduce that 
$$\E_i\left[\left\|\w{b}_{i}-b^{*}_{A_{N_i},i}\right\|^{2}_{n,i}\right]-2\E_i\left[\left\|\w{b}_{i}-b^{*}_{A_{N_i},i}\right\|^{2}_{n,N_i}\right]\leq C\log^{3}(N_i)K_{N_i}/N_i$$ 
with $C>0$ a constant depending on $\mfp_0=\underset{i\in\cY}{\min}{\mfp^{*}_{i}}$. Besides, since $b^{*}_{i}\in\Sigma(\beta,R)$, we have 
$$\underset{h\in\mathcal{S}_{K_{N_i},M}}{\inf}{\|h-b^{*}_{A_{N_i},i}\|^{2}_{n,i}}\leq C\left(\frac{A_{N_i}}{K_{N_i}}\right)^{2\beta}$$
where $C>0$ is a constant (see \cite{denis2020ridge}, Lemma D.2). Then it comes that
$$\E_i\left[\left\|\w{b}_{i}-b^{*}_{A_{N_i},i}\right\|^{2}_{n,i}\right] \leq C\left(\left(\frac{A_{N_i}}{K_{N_i}}\right)^{2\beta}+\frac{K_{N_i}\log^{3}(N_i)}{N_i}+\Delta\right)
    $$
where $C>0$ is a constant depending on $\beta$, $\Delta=\mathrm{O}(1/N)$. Since $$K_{N_i}=\mathrm{O}\left(\log^{-5/2}(N_i)N^{1/(2\beta+1)}_{i}\right),$$
we obtain
\begin{equation*}
    \E_i\left[\left\|\w{b}_{i}-b^{*}_{A_{N_i},i}\right\|^{2}_{n,i}\right] \leq  C\log^{6\beta}(N_i)N^{-\frac{2\beta}{2\beta+1}}_{i} \leq C\log^{6\beta}(N)N^{-\frac{2\beta}{2\beta+1}}_{i}.
\end{equation*}
Using the Jensen's inequality,
\begin{equation*}
  \E\left[\one_{N_i>1}\left\|\w{b}_{i}-b^{*}_{A_{N_i},i}\right\|^{2}_{n,i}\right] \leq C\log^{6\beta}(N)\E\left[\one_{N_i>1}N^{-\frac{2\beta}{2\beta+1}}_{i}\right] \leq C\log^{6\beta}(N)\left(\E\left[\frac{\one_{N_i>1}}{N_i}\right]\right)^{\frac{2\beta}{2\beta+1}}.  
\end{equation*}
Using again \textit{Lemma 4.1} from \cite{gyorfi2006distribution}, we obtain
\begin{equation*}
    \E\left[\one_{N_i>1}\left\|\w{b}_{i}-b^{*}_{A_{N_i},i}\right\|^{2}_{n,i}\right] \leq C\log^{6\beta}(N)\left(\E\left[\frac{\one_{N_i>1}}{N_i}\right]\right)^{\frac{2\beta}{2\beta+1}} \leq C\log^{6\beta}(N)N^{-\frac{2\beta}{2\beta+1}}.
\end{equation*}
\end{proof}

\begin{proof}[\textbf{Proof of Theorem~\ref{thm:AlmostOptimalRate}~}]
For all $i\in\cY$, recall that $b^{*}_{A_{N_i},i}=b^{*}_{i}\one_{[-A_{N_i},A_{N_i}]}$. Furthermore, set 
\begin{equation}
\label{eq:MinA_Ni}
    N_0:=\underset{i\in\cY}{\min}{\ N_i}, \ \ \mathrm{then} \ \ A_{N_0}:=\underset{i\in\cY}{\min}{A_{N_i}}.
\end{equation}
We have 
$$ \E\left[\cR(\w{g})-\cR(g^{*})\right] =  \E\left[\left(1-\cR(g^{*})\right)\one_{N_0 \leq 1}\right] +  \E\left[(\cR(\w{g})-\cR(g^{*}))\one_{N_0>1}\right]. $$
Then, from Proposition~\ref{prop:excessRiskClass}, we deduce that
\begin{eqnarray*}
    \E\left[\cR(\w{g})-\cR(g^{*})\right] 
    &\leq & \sum_{i=1}^{K}{\P(N_i \leq 1)} + 2\sum_{i=1}^{K}{\E\left[\left|\w{\pi}_{i}(X)-\pi^{*}_{i}(X)\right|\one_{N_0>1}\right]} \\
    &\leq & 2KN(1-\mfp^{*}_{0})^{N-1} + 2\sum_{i=1}^{K}{\E\left[\left|\w{\pi}_{i}(X)-\pi^{*}_{i}(X)\right|\one_{N_0>1}\right]}
\end{eqnarray*}
since $\w{g}=1$ on the event $\{N_0 \leq 1\}$. For all $i\in\cY$ and on the event $\{N_0>1\}$,
\begin{equation*}
    \left|\w{\pi}_{i}(X)-\pi^{*}_{i}(X)\right|\leq\left|\w{\pi}_{i}(X)-\bar{\pi}^{A_{N_0}}_{i}(X)\right|+\left|\bar{\pi}^{A_{N_0}}_{i}(X)-\bar{\pi}^{*}_{i}(X)\right|+\left|\bar{\pi}^{*}_{i}(X)-\pi^{*}_{i}(X)\right|
\end{equation*}
where $\bar{\pi}^{A_{N_0}}_{i}(X):=\phi_i\left(\bar{\textbf{F}}^{A_{N_0}}\right)$ and $\bar{\textbf{F}}^{A_{N_0}}=\left(\bar{F}^{A_{N_0}}_{1},\cdots,\bar{F}^{A_{N_0}}_{K}\right)$ with
\begin{equation*}
    \forall i\in\cY, \ \ \bar{F}^{A_{N_0}}_{i}=\sum_{k=0}^{n-1}{b^{*}_{A_{N_0},i}(X_{k\Delta})(X_{(k+1)\Delta}-X_{k\Delta})-\frac{\Delta}{2}b^{*2}_{A_{N_0},i}(X_{k\Delta})}.
\end{equation*}
Then, there exists a constant $c>0$ such that
\begin{eqnarray*}
    \E\left[\cR(\w{g})-\cR(g^{*})\right]&\leq & 2\left(\sum_{i=1}^{K}{\E\left(\left|\w{\pi}_{i}(X)-\bar{\pi}^{A_{N_0}}_{i}(X)\right|\one_{N_0>1}\right)}+\sum_{i=1}^{K}{\E\left(\left|\bar{\pi}^{A_{N_0}}_{i}(X)-\bar{\pi}^{*}_{i}(X)\right|\one_{N_0>1}\right)}\right)\\
    &&+ c(1-\mfp^{*}_0)^{N/2}  +2\sum_{i=1}^{K}{\E\left|\bar{\pi}^{*}_{i}(X)-\pi^{*}_{i}(X)\right|}.
\end{eqnarray*}
From the proof of Theorem~\ref{thm:comparisonInequality}, there exists a constant $C_1>0$ depending on $K, \mfp^{*}_0$ and $C_{\bf b^{*}}$ and a constant $C_2>0$ depending on $K$ such that  
\begin{align*}
    & \sum_{i=1}^{K}{\E\left|\w{\pi}_{i}(X)-\bar{\pi}^{A_{N_0}}_{i}(X)\right|}\leq C_1\left(\frac{1}{\sqrt{N}}+\sum_{i=1}^{K}{\E\left[\one_{N_0>1}\left\|\w{b}_i-b^{*}_{A_{N_0},i}\right\|_n\right]}\right),\\
    & \sum_{i=1}^{K}{\E\left|\bar{\pi}^{*}_{i}(X)-\pi^{*}_{i}(X)\right|}\leq C_2\sqrt{\Delta}.
\end{align*}
Thus, we have
\begin{eqnarray*}
    \E\left[\cR(\w{g})-\cR(g^{*})\right]&\leq& 2C_1\left(\frac{1}{\sqrt{N}}+\sum_{i=1}^{K}{\E\left[\one_{N_0>1}\left\|\w{b}_i-b^{*}_{A_{N_0},i}\right\|_n\right]}\right)+2C_2\sqrt{\Delta} + c(1-\mfp^{*}_0)^{N/2} \\
    &&+2K\sum_{i=1}^{K}{\E\left[\left|\bar{F}^{A_{N_0}}_{i}(X)-\bar{F}_{i}(X)\right|\one_{N_0>1}\right]}.
\end{eqnarray*}
For all $i\in\cY$,
\begin{eqnarray*}
\E\left[\left|\bar{F}^{A_{N_0}}_{i}(X)-\bar{F}_{i}(X)\right|\one_{N_0>1}\right]&\leq& \E\left[\left|\sum_{k=0}^{n-1}{b^{*}_{i}(X_{k\Delta})\one_{|X_{k\Delta}|>A_{N_0}}\int_{k\Delta}^{(k+1)\Delta}{b^{*}_{i}(X_s)ds}}\right|\one_{N_0>1}\right]\\
    &&+\frac{\Delta}{2}\sum_{k=0}^{n-1}{\E\left[\one_{N_0>1}b^{*2}_{i}(X_{k\Delta})\one_{|X_{k\Delta}|>A_{N_0}}\right]}\\
    &&+\E\left|\sum_{k=0}^{n-1}{b^{*}_{i}(X_{k\Delta})\one_{N_0>1}\one_{|X_{k\Delta}|>A_{N_0}}(W_{(k+1)\Delta}-W_{k\Delta})}\right|.
\end{eqnarray*}
Under Assumption~\ref{ass:boundedDrift}, we easily obtain that
$$
    \E\left|\sum_{k=0}^{n-1}{b^{*}_{i}(X_{k\Delta})\one_{N_0>1}\one_{|X_{k\Delta}|>A_{N_0}}\int_{k\Delta}^{(k+1)\Delta}{b^{*}_{i}(X_s)ds}}\right|\leq  C^{2}_{\mathbf{b}^{*}}\underset{t\in[0,1]}{\sup}{\mathbb{P}\left(\{N_0>1\}\cap\{|X_t|>A_{N_0}\}\right)}, $$
and
$$
    \frac{\Delta}{2}\sum_{k=0}^{n-1}{\E\left[b^{*2}_{i}(X_{k\Delta})\one_{|X_{k\Delta}|>A_{N_0}}\right]}\leq \frac{C^{2}_{\mathbf{b}^{*}}}{2}\underset{t\in[0,1]}{\sup}{\mathbb{P}\left(\{N_0>1\}\cap\{|X_t|>A_{N_0}\}\right)}.
$$
For the last term, consider the natural filtration \textcolor{black}{$(\mathcal{F}_t)_{t \geq 0}$} of the Brownian motion $(W_t)_{t\geq 0}$. For all $k\in[\![0,n-1]\!]$, $X_{k\Delta}$ is measurable with respect to $\mathcal{F}_{k\Delta}$ and $W_{(k+1)\Delta}-W_{k\Delta}$ is independent of $\mathcal{F}_{k\Delta}$ since the Brownian motion is an independently increasing process. Consequently,  \textcolor{black}{setting,}
$$\textcolor{black}{\mathcal{Z} = \E\left|\sum_{k=0}^{n-1}{b^{*}_{i}(X_{k\Delta})\one_{N_0>1}\one_{|X_{k\Delta}|>A_{N_0}}(W_{(k+1)\Delta}-W_{k\Delta})}\right|,}$$
\textcolor{black}{and using the Cauchy Schwarz inequality, we obtain}
\begin{align*}
  \textcolor{black}{\mathcal{Z} \leq} &  \textcolor{black}{\left\{\mathbb{E}\left[\left(\sum_{k=0}^{n-1}{b^{*}_{i}(X_{k\Delta})\mathds{1}_{N_0 > 1}\mathds{1}_{|X_{k\Delta}|>A_{N_0}}(W_{(k+1)\Delta} - W_{k\Delta})}\right)^2\right]\right\}^{1/2}}\\
    \textcolor{black}{\leq} & \textcolor{black}{\left\{\mathbb{E}\left[\sum_{k,\ell = 0}^{n-1}{b^{*}_{i}(X_{k\Delta})b^{*}_{i}(X_{\ell\Delta})\mathds{1}_{N_0 > 1}\mathds{1}_{|X_{k\Delta}|>A_{N_0}}\mathds{1}_{|X_{\ell\Delta}|>A_{N_0}}(W_{(k+1)\Delta} - W_{k\Delta})(W_{(\ell+1)\Delta} - W_{\ell\Delta})}\right]\right\}^{1/2}}\\
    \textcolor{black}{\leq} & \textcolor{black}{\left\{2\mathbb{E}\left[\sum_{k > \ell}{b^{*}_{i}(X_{k\Delta})b^{*}_{i}(X_{\ell\Delta})\mathds{1}_{N_0 > 1}\mathds{1}_{|X_{k\Delta}|>A_{N_0}}\mathds{1}_{|X_{\ell\Delta}|>A_{N_0}}(W_{(k+1)\Delta} - W_{k\Delta})(W_{(\ell+1)\Delta} - W_{\ell\Delta})}\right]\right\}^{1/2}}\\
    & \textcolor{black}{+ \left\{\mathbb{E}\left[\sum_{k = 0}^{n-1}{b^{*2}_{i}(X_{k\Delta})\mathds{1}_{N_0 > 1}\mathds{1}_{|X_{k\Delta}|>A_{N_0}}(W_{(k+1)\Delta} - W_{k\Delta})^2}\right]\right\}^{1/2}} \\
    \textcolor{black}{\leq} & \textcolor{black}{T_1 + T_2.}
\end{align*}
\textcolor{black}{We recall that $(\mathcal{F}_{t})_{t \geq 0}$ is the natural filtration of the Brownian motion $(W_t)_{t \geq 0}$. Since for all $k \in [\![0,n-1]\!]$, $X_{k\Delta}$ is $\mathcal{F}_{k\Delta}-$measurable, we have}
\begin{align*}
    \textcolor{black}{T_2^2 \leq} & \textcolor{black}{\mathbb{E}\left[\sum_{k = 0}^{n-1}{b^{*2}_{i}(X_{k\Delta})\mathds{1}_{N_0 > 1}\mathds{1}_{|X_{k\Delta}|>A_{N_0}}\mathbb{E}\left[(W_{(k+1)\Delta} - W_{k\Delta})^2|\mathcal{F}_{k\Delta}\right]}\right]} \\
    \textcolor{black}{\leq} & \textcolor{black}{C^{2}_{\mathbf{b}^{*}}\underset{t \in [0,1]}{\sup}{\mathbb{P}(N_0 > 1, |X_t| > A_{N_0})}.}
\end{align*}
\textcolor{black}{On the other hand, for all $k,\ell \in [\![0,n-1]\!]$ such that $k > \ell$, we remark that}
\begin{align*}
    \textcolor{black}{T_1^2 \leq} & \textcolor{black}{2\mathbb{E}\left[\sum_{k > \ell}{b^{*}_{i}(X_{k\Delta})b^{*}_{i}(X_{\ell\Delta})\mathds{1}_{N_0 > 1}\mathds{1}_{|X_{k\Delta}|>A_{N_0}}\mathds{1}_{|X_{\ell\Delta}|>A_{N_0}}(W_{(\ell+1)\Delta} - W_{\ell\Delta})\mathbb{E}\left[W_{(k+1)\Delta} - W_{k\Delta}|\mathcal{F}_{k\Delta}\right]}\right]} \\
    \textcolor{black}{=} & \textcolor{black}{0.}
\end{align*}
\textcolor{black}{Thus, we deduce that}
\begin{align*}
    \textcolor{black}{\mathcal{Z} = \E\left|\sum_{k=0}^{n-1}{b^{*}_{i}(X_{k\Delta})\one_{N_0>1}\one_{|X_{k\Delta}|>A_{N_0}}(W_{(k+1)\Delta}-W_{k\Delta})}\right| \leq C_{\mathbf{b}^{*}}\sqrt{\underset{t \in [0,1]}{\sup}{\mathbb{P}(N_0 > 1, |X_t| > A_{N_0})}}.}
\end{align*}
Finally, there exists a constant $C>0$ such that
\begin{equation}
\label{eq:UB-ExcessRisk1}
    \E\left[\cR(\w{g})-\cR(g^{*})\right]\leq C\left(\frac{1}{\sqrt{N}}+\sum_{i=1}^{K}{\sum_{j=1}^{K}{\mfp^{*}_j\E\left[\left\|\w{b}_i-b^{*}_{A_{N_0},i}\right\|_{n,j}\one_{N_0>1}\right]}}+\textcolor{black}{\sqrt{\underset{t \in [0,1]}{\sup}{\mathbb{P}(N_0 > 1, |X_t| > A_{N_0})}}}\right).
\end{equation}
From Proposition~\ref{prop:eqNorm}~ with $\alpha=1$, for all $i,j\in\cY$ such that $i\neq j$, we have
\begin{equation}
\label{eq:equivNorms}
    \E\left[\left\|\w{b}_i-b^{*}_{A_{N_0},i}\right\|_{n,j}\one_{N_0>1}\right]\leq~C\exp(\sqrt{c\log(N)})\E\left[\left\|\w{b}_i-b^{*}_{A_{N_0},i}\right\|_{n,i}\one_{N_0>1}\right]+C\frac{\log(N)}{N}.
\end{equation}
Furthermore, for all $i\in\cY$, we have
\begin{multline*}
    \E\left[\left\|\w{b}_i-b^{*}_{A_{N_0},i}\right\|_{n,i}\one_{N_0>1}\right] \leq \E\left[\left\|\w{b}_i-b^{*}_{A_{N_i},i}\right\|_{n,i}\one_{N_i>1}\right] + \E\left[\left\|b^{*}_{A_{N_i},i}-b^{*}_{A_{N_0},i}\right\|_{n,i}\one_{N_0>1}\right]\\
    \leq \E\left[\left\|\w{b}_i-b^{*}_{A_{N_i},i}\right\|_{n,i}\one_{N_i>1}\right] + \|b^{*}_{i}\|_{\infty}\underset{t\in[0,1]}{\sup}{\P\left(\{A_{N_i}\geq|X_t|>A_{N_0}\}\cap\{N_0>1\}\right)}\\
    \leq \E\left[\left\|\w{b}_i-b^{*}_{A_{N_i},i}\right\|_{n,i}\one_{N_i>1}\right] + C_{{\bf b}^{*}}\underset{t\in[0,1]}{\sup}{\sum_{j\neq i}{\P\left(\{|X_t|>A_{N_j}\}\cap\{N_j>1\}\right)}}.
\end{multline*}
We deduce from Equations~\eqref{eq:UB-ExcessRisk1}~and~\eqref{eq:equivNorms}~ that there exists a constant $C>0$ depending on $C_{{\bf b}^{*}}, K$ and $p_0$ such that
\begin{multline*}
    \E\left[\cR(\w{g})-\cR(g^{*})\right]\leq C\left(\frac{1}{\sqrt{N}}+\exp\left(\sqrt{c\log(N)}\right)\sum_{i=1}^{K}{\E\left[\left\|\w{b}_i-b^{*}_{A_{N_i},i}\right\|_{n,i}\one_{N_i>1}\right]}\right)\\
    +C\exp\left(\sqrt{c\log(N)}\right)\textcolor{black}{\sqrt{\underset{t\in[0,1]}{\sup}{\sum_{i=1}^{K}{\P\left(\{|X_t|>A_{N_i}\}\cap\{N_i>1\}\right)}}}}.
\end{multline*}
Under the Assumptions of the Proposition and according to Theorem~\ref{thm:LossErrorDrift}, there exist two constants $C_1,C_2>0$ such that $\forall i\in\cY$,
\begin{equation*}
 \E\left[\left\|\w{b}_i-b^{*}_{A_{N_i},i}\right\|_{n,i}\one_{N_i>1}\right]\leq C_1\log^{3\beta}(N)N^{-\beta/(2\beta+1)}  
\end{equation*}
and we deduce from Lemma~\ref{lm:DensityConstSigma}~ with $q=3/2$, for all $i\in\cY$, and for all $t\in[0,1]$,
\begin{align*}
    \mathbb{P}(\{|X_t|>A_{N_i}\}\cap\{N_0>1\})= &~ \E\left[\mathbb{P}(\{|X_t|>A_{N_i}\}\cap\{N_i>1\}|\one_{Y_1=i},\cdots,\one_{Y_N=i})\right]\\
    \leq &~ C_2\E\left[\frac{\one_{N_i>1}}{A_{N_i}}\exp\left(-\frac{A^{2}_{N_i}}{3}\right)\right].
\end{align*}
Thus, we obtain
\begin{equation*}
    \E\left[\cR(\w{g})-\cR(g^{*})\right]\leq C\left(\exp\left(2\sqrt{c\log(N)}\right)N^{-\beta/(2\beta+1)}+\textcolor{black}{\sqrt{\sum_{i=1}^{K}{\E\left[\one_{N_i>1}\exp\left(-\frac{A^{2}_{N_i}}{3}\right)\right]}}}\right)
\end{equation*}
where $C>0$ is a constant depending on $\beta, C_{\bf b^{*}}, K, \mfp^{*}_0$. Finally, choosing \textcolor{black}{$A_{N_i}=\sqrt{\frac{6\beta}{2\beta+1}\log(N_i)}$} for each $i\in\cY$ leads to the attended result applying the Jensen's inequality together with Lemma 4.1 in \cite{gyorfi2006distribution}.
\end{proof}

\begin{proof}[\textbf{Proof of Theorem~\ref{thm:RateClass.Re-entrant.Drift}~}]
From Theorem~\ref{thm:comparisonInequality}, as we assumed $\sigma^{*}(.)=1$, the excess risk of $\w{g}$ satisfies
\begin{equation}
\label{eq:ExRiskUp}
    \E\left[\cR(\w{g})-\cR(g^{*})\right]\leq C\left(\sqrt{\Delta}+\frac{1}{\mfp^{*}_0\sqrt{N}}+\sum_{i=1}^{K}{\E\left[\left\|\w{b}_i-b^{*}_{i}\right\|_n\one_{N_i>1}\right]} + \sum_{i=1}^{K}{\P(N_i\leq 1)}\right)
\end{equation}
where the constant $C>0$ depends on $b^{*}=\left(b^{*}_{1},\cdots,b^{*}_{K}\right)$ and $K$. For each $i\in\cY$, we have 
\begin{align*}
    \P(N_i \leq 1) \leq 2N(1-\mfp^{*}_{0})^{N-1}
\end{align*}
and
\begin{align*}
    \E\left[\left\|\w{b}_i-b^{*}_{i}\right\|_n\one_{N_i>1}\right] \leq & \sqrt{\E\left[\left\|\w{b}_i-b^{*}_{A_{N_i},i}\right\|^{2}_{n}\one_{N_i>1}\right]+\E\left[\left\|b^{*}_{i}\one_{[-A_{N_i},A_{N_i}]^{c}}\right\|^{2}_{n}\one_{N_i>1}\right]}
\end{align*}
Using the Cauchy-Schwarz inequality and Assumption~\ref{ass:RegEll}, there exists a constant $C^{\prime}>0$ such that
\begin{equation*}
    \E\left[\left\|b^{*}_{i}\one_{[-A_{N_i},A_{N_i}]^{c}}\right\|^{2}_{n}\one_{N_i>1}\right] \leq C^{\prime}\sqrt{\underset{t\in[0,1]}{\sup}{\P(\{|X_t|>A_{N_i}\}\cap\{N_i>1\})}}.
\end{equation*}
Thus, for all $i\in\cY$, we obtain
\begin{equation}
\label{eq:FirstInq}
    \E\left\|\w{b}_i-b^{*}_{i}\right\|_n\leq\sqrt{\E\left[\left\|\w{b}_i-b^{*}_{A_{N_i},i}\right\|^{2}_{n}\one_{N_i>1}\right]+C^{\prime}\sqrt{\underset{t\in[0,1]}{\sup}{\P(\{|X_t|>A_{N_i}\}\cap\{N_i>1\})}}}.
\end{equation}
For each label $i\in\cY$,
\begin{align*}
    \E\left[\left\|\w{b}_i-b^{*}_{A_{N_i},i}\right\|^{2}_{n}\one_{N_i>1}\right] = & E\left(\one_{N_i>1}\int_{-A_{N_i}}^{A_{N_i}}{\left(\w{b}_i-b^{*}_{A_{N_i},i}\right)^{2}(x)f_{n,Y}(x)dx}\right)+\frac{2\log^{3}(N)}{n}
\end{align*}
where $$f_{n,Y}(x):=\frac{1}{n}\sum_{k=1}^{n-1}{\textcolor{black}{p_{Y,X}(k\Delta,x)}}.$$
From the proof of Lemma~\ref{lm:MinEigenValue}, under Assumption~\ref{ass:RegEll}, there exist constants $C_1,C_2>0$ such that on the event $\{N_i>1\}$,
\begin{equation*}
    \forall x\in[-A_{N_i},A_{N_i}], \  f_{n,Y}(x)\geq\frac{C_1}{\log(N)}\exp\left(-\frac{2A^{2}_{N_i}}{3(1-\log^{-1}(N))}\right)\geq\frac{C_2}{\log(N)}\exp\left(-\frac{2}{3}A^{2}_{N_i}\right) \ a.s
\end{equation*}
and from Lemma~\ref{lem:boundDensity}~ there exists another constant $C_0>0$ such that $f_{n,Y}(x)\leq C_0$ for all $x\in\R$. Then we have
\begin{equation*}
    \forall i\in\cY, \ \forall x\in[-A_{N_i},A_{N_i}], \ \frac{f_{n,Y}(x)}{f_{n,i}(x)}\leq\frac{C_0}{C_2}\log(N)\exp\left(\frac{2}{3}A^{2}_{N}\right).
\end{equation*}
Then, for all $i\in\cY$, we obtain
\begin{align*}
     \E\left[\left\|\w{b}_i-b^{*}_{A_{N_i},i}\right\|^{2}_{n}\one_{N_i>1}\right] \leq &~ \E\left[\one_{N_i>1}\int_{-A_{N_i}}^{A_{N_i}}{\left(\w{b}_i-b^{*}_{A_{N_i},i}\right)^{2}(x)f_{n,i}(x)\frac{f_{n,Y}(x)}{f_{n,i}(x)}}\right]+\frac{2\log^{3}(N)}{n}\\
    \leq&~\frac{C_0}{C_2}\log(N)\exp\left(\frac{2}{3}A^{2}_{N}\right)\E\left[\left\|\w{b}_i-b^{*}_{A_{N_i},i}\right\|^{2}_{n,i}\one_{N_i>1}\right]+\frac{2\log^{3}(N)}{n}.
\end{align*}
From Theorem~\ref{thm:LossErrorDrift}, Equation~\eqref{eq:FirstInq}~ and for $n\propto N$, there exists a constant $C_3>0$ such that
\begin{equation*}
    \E\left[\left\|\w{b}_i-b^{*}_{i}\right\|_n\one_{N_i>1}\right] \leq C_3\sqrt{\exp\left(\frac{2}{3}A^{2}_{N}\right)\log^{6\beta+1}(N)N^{-\frac{2\beta}{2\beta+1}}+\sqrt{\underset{t\in[0,1]}{\sup}{\P(\{|X_t|>A_{N_i}\}\cap\{N_i>1\})}}}.
\end{equation*}
Using the Markov inequality, for all $t\in[0,1]$, we have 
\begin{align*}
\P(\{|X_t|>A_{N_i}\}\cap\{N_i>1\}) = & \E\left[\P\left(\{\exp(4|X_t|^2)>\exp(4A^{2}_{N_i})\}\cap\{N_i>1\}|\one_{Y_1=i},\cdots,\one_{Y_N=i}\right)\right]\\
\leq & \E\left[\exp(4|X_t|^2)\right]\E\left[\exp(-4A^{2}_{N_i})\one_{N_i>1}\right]
\end{align*}
and since $\sigma^{*}(.)=1$ and under Assumption~\ref{ass:Re-entrant.Drift}, there exists a constant $C_{*}>0$ such that $\E\left[\exp(4|X_t|^2)\right]\leq C_{*}$ (according to \cite{gobet2002lan}, Proposition 1.1). Thus, there exists a constant $C>0$ such that
\begin{equation}
\label{eq:SecondInq}
    \E\left[\left\|\w{b}_i-b^{*}_{i}\right\|_n\one_{N_i>1}\right] \leq C\left(\exp\left(\frac{1}{3}A^{2}_{N}\right)\log^{3\beta+1}(N)N^{-\beta/(2\beta+1)}\right)+C\E\left[\exp(-4A^{2}_{N_i})\one_{N_i>1}\right].
\end{equation}
From Equations~\eqref{eq:SecondInq}~and~\eqref{eq:ExRiskUp}, we finally obtain
\begin{align*}
    \E\left[\cR(\w{g})-\cR(g^{*})\right] \leq C\log^{3\beta+1}(N)N^{-3\beta/4(2\beta+1)}
\end{align*}
with $A_{N_i} \textcolor{black}{\leq} \sqrt{\frac{3\beta}{4(2\beta+1)}\log(N_i)}$ and $C>1$ a new constant.
\end{proof}

\bibliographystyle{ScandJStat}
\bibliography{mabiblio.bib}

\newpage 

\section*{Appendix}

\begin{proof}[\textbf{Proof of Lemma \ref{lem:discrete}}]
Let $s,t\in[0,1]$ with $s<t$, and $q \geq 1$. By convexity of $x \mapsto |x|^{2q}$, we have
\begin{equation*}
|X_t-X_s|^{2q} \leq   2^{2q-1} \left(\left|\int_{s}^{t}b^{*}_{Y}(X_u)du\right|^{2q}+\left|\int_{s}^{t}{\sigma(X_u)dW_u}\right|^{2q}\right) 
\end{equation*}
Then, from Jensen's inequality, we have
\begin{equation*}
 \left|\int_{s}^{t}b^{*}_{Y}(X_u)du\right|^{2q}\leq (t-s)^{2q-1} \int_{s}^t \left|b^{*}_{Y}(X_u)\right|^{2q} du,
\end{equation*}
Hence, under Assumption~\ref{ass:RegEll} on function $b^*_Y$, we deduce that
\begin{equation*}
 \E\left[\left|\int_{s}^{t}b^{*}_{Y}(X_u)du\right|^{2q}\right]
 \leq C_q(t-s)^{2q}\left(1+\E\left[\sup_{t \in [0,1]}\left|X_s\right|^{2q}\right]\right),
\end{equation*}
and 
using Burkholder-Davis-Gundy inequality, we obtain
\begin{align*}
	\forall m>0, \ \ \ \E\left[\left(\int_{s}^{t}{\sigma(X_u)dW_u}\right)^{2m}\right]&\leq C_m\E\left[\left(\int_{s}^{t}{\sigma^{2}(X_u)du}\right)^m\right]\leq C_m\sigma^{2m}_{1}(t-s)^{m}.
\end{align*}
From the above equalities, we get



Finally, as the process has finite moments, we obtain that 
\begin{equation*}
\E\left|X_t-X_s\right|^{2q}\leq C(t-s)^q
\end{equation*}
where $C$ is a constant depending on $q,L_{0}$, and $\sigma_{1}$.
\end{proof}

\begin{proof}[\textbf{Proof of Lemma~\ref{lm:MinEigenValue}~}]
For all $i\in\cY$ and on the event $\{N_i>1\}$, let us consider a vector 

$\left(x_{-M},\cdots,x_{K_{N_i}-1}\right)\in\mathbb{R}^{K_{N_i}+M}$ such that $x_j\in[u_{j+M},u_{j+M+1})$ and $B_{j,M,\mathbf{u}}(x_j)\neq 0$. Since $[u_{j+M},u_{j+M+1})\cap[u_{j^{\prime}+M},u_{j^{\prime}+M+1})=\emptyset$ for all $j,j^{\prime}\in\{-M,\cdots,K_{N_i}-1\}$ such that $j\neq j^{\prime}$, then for all $j,j^{\prime}\in\{-M,\cdots,K_{N_i}-1\}$ such that $j\neq j^{\prime}, B_{j,M,\mathbf(u)}(x_{j^{\prime}})=0$. Consequently, we obtain:
\begin{align*}
    \det\left(\left(B_{\ell,M,\mathbf{u}}(x_{\ell^{\prime}})\right)_{-M\leq\ell,\ell^{\prime}\leq K_{N_i}-1}\right)&=\det\left(\mathrm{diag}\left(B_{-M,M,\mathbf{u}}(x_M),\cdots,B_{K_{N_i}-1,M,\mathbf{u}}(x_{K_{N_i}-1})\right)\right)\\
    &=\prod_{\ell=-M}^{K_{N_i}-1}{B_{\ell,M,\mathbf{u}}(x_{\ell})}\neq 0.
\end{align*}
Then, we deduce from \cite{comte2020nonparametric}, \textit{Lemma 1} that the matrix $\Psi_{K_{N_i}}$ is invertible for all $K_{N_i}\in\mathcal{K}_{N_i}$, where the interval $[-A_{N_i},A_{N_i}]$ and the function $f_T$ is replaced by $f_n : x\mapsto\frac{1}{n}\sum_{k=0}^{n-1}{p(k\Delta,x)}$ with $\lambda([-A_{N_i},A_{N_i}]\cap\mathrm{supp}(f_n))>0$, $\lambda$ being the Lebesgue measure. \\

For all $w\in\mathbb{R}^{K_{N_i}+M}$ such that $\|w\|_{2,K_{N_i}+M}=1$, we have:
\begin{equation*}
w^{\prime}\Psi_{K_{N_i}}w=\|h_{w}\|^{2}_{n}=\int_{-A_{N_i}}^{A_{N_i}}{h^{2}_{w}(x)f_n(x)dx}+\frac{h^{2}_{w}(x_0)}{n} \ \quad  \mathrm{with} \ \ h_w=\sum_{\ell=-M}^{K_{N_i}-1}{w_{\ell}B_{\ell,M,\mathbf{u}}}.
\end{equation*}

Since $\sigma^{*}=1$, according to Lemma~\ref{lm:DensityConstSigma}, under Assumption~\ref{ass:RegEll}, the transition density satisfies:
\begin{equation*}
    \forall (t,x)\in(0,1]\times\mathbb{R}, \ \ \frac{1}{K_q\sqrt{t}}\exp\left(-\frac{(2q-1)x^2}{2qt}\right)\leq \textcolor{black}{p_X(t,x)} \leq\frac{K_q}{\sqrt{t}}\exp\left(-\frac{x^2}{2qt}\right) \ \ \mathrm{where} \ \ K_q>1 \ \ \mathrm{and} \ \ q>1.
\end{equation*}

We set $q=3/2$, thus, since $s\mapsto\exp\left(-(2q-1)x^2/2qs\right)$ is an increasing function, we have on the event $\{N_i>1\}$ and for all $x\in[-A_{N_i},A_{N_i}]$,
\begin{align*}
    f_n(x)&\geq\frac{1}{Cn}\sum_{k=1}^{n-1}{\exp\left(-\frac{2x^2}{3k\Delta}\right)}\geq\frac{1}{C}\int_{0}^{(n-1)\Delta}{\exp\left(-\frac{2x^2}{3s}\right)ds}\\
    &\geq\frac{1}{C}\int_{1-\log^{-1}(N_{i})}^{1-2^{-1}\log^{-1}(N_i)}{\exp\left(-\frac{2x^2}{3s}\right)ds}\\
    &\geq\frac{1}{2C\log(N_i)}\exp\left(-\frac{2A^{2}_{N_i}}{3(1-\log^{-1}(N_i))}\right).
\end{align*}

Finally, since there exists a constant $C_1>0$ such that $\|h_w\|^{2}\geq C_1A_{N_i}K^{-1}_{N_i}$ (see \cite{denis2020ridge}, Lemma 2.6), for all $w\in\mathbb{R}^{K_{N_i}+M}$ such that $\|w\|_{2,K_{N_i}+M}=1$, there exists constants $C^{\prime},C>0$ such that,
\begin{equation*}
    w^{\prime}\Psi_{K_{N_i}}w\geq\frac{C^{\prime}A_{N_i}}{K_{N_i}\log(N_i)}\exp\left(-\frac{2A^{2}_{N_i}}{3(1-\log^{-1}(N_i))}\right)\geq\frac{CA_{N_i}}{K_{N_i}\log(N_i)}\exp\left(-\frac{2}{3}A^{2}_{N_i}\right).
\end{equation*}

Furthermore, we set $w_0=e_{K_{N_i}-1}\in\mathbb{R}^{K_{N_i}+M}$ where for all $\ell\in[\![-M,K_{N_i}-1]\!]$,
$$\left[e_{K_{N_i}-1}\right]_{\ell}:=\delta_{\ell,K_{N_i}-1}=\begin{cases} 0 \ \ \mathrm{if} \ \ \ell\neq K_{N_i}-1 \\ 1 \ \ \mathrm{else}.\end{cases}$$

We have,
\begin{align*}
    w^{\prime}_{0}\Psi_{K_{N_i}}w_0&=\int_{-A_{N_i}}^{A_{N_i}}{B^{2}_{K_{N_i}-1,M,\mathbf{u}}(x)f_n(x)}+\frac{B_{K_{N_i}-1,M,\mathbf{u}}(0)}{n}\\
    &\leq\frac{C}{n}\sum_{k=1}^{n-1}{\frac{1}{\sqrt{k\Delta}}\exp\left(-\frac{u^{2}_{K_{N_i}-1}}{3k\Delta}\right)}\left\|B_{K_{N_i}-1,M,\mathbf{u}}\right\|^{2}+\frac{1}{n}\\
    &\leq\frac{CC_1A_{N_i}K^{-1}_{N_i}}{n}\sum_{k=1}^{n-1}{\frac{1}{\sqrt{k\Delta}}\exp\left(-\frac{\alpha^{2}_{N_i}}{3k\Delta}\right)}+\frac{1}{n}
\end{align*}
where $\alpha_{N_i}=A_{N_i}(K_{N_i}-2)/K_{N_i}, \left\|B_{K_{N_i}-1,M,\mathbf{u}}\right\|^{2}\leq C_1A_{N_i}K^{-1}_{N_i}$ (see \cite{denis2020ridge}, Lemma 2.6) and $C_1>0$ is a constant. Since the function $s\mapsto\exp\left(-\alpha^{2}_{N_i}/3s\right)/\sqrt{s}$ is increasing, we deduce that $$n^{-1}\sum_{k=1}^{n-1}{\frac{1}{\sqrt{k\Delta}}\exp\left(-\alpha^{2}_{N_i}/3k\Delta\right)}\leq n^{-1}\sum_{k=1}^{n-1}{\exp\left(-\alpha^{2}_{N_i}/3\right)},$$
and for $N$ large enough,
\begin{align*}
    w^{\prime}_{0}\Psi_{K_{N_i}}w_0\leq \frac{CA_{N_i}}{K_{N_i}}\exp\left(-\frac{A^{2}_{N_i}}{3}\left(\frac{K_{N_i}-2}{K_{N_i}}\right)^{2}\right)+\frac{1}{n}\leq \frac{C^{\prime}A_{N_i}}{K_{N_i}}\exp\left(-\frac{A^{2}_{N_i}}{3}\left(\frac{K_{N_i}-2}{K_{N_i}}\right)^{2}\right)
\end{align*}
where $C^{\prime}>0$ is a constant and $n\geq N\geq N_i$.
\end{proof}

\begin{proof}[\textbf{Proof of Lemma~\ref{lm:proba-complementary-omega}~}]\label{proof:lemmaprobacom}
Let us remind the reader of the Gram matrix $\Psi_{K_{N_i}}$ given in Equation~\eqref{eq:psimatrix}  for $i\in\cY$,
\begin{equation*}\Psi_{K_{N_i}}=\E\left[\frac{1}{N_in}\mathbf{B}^{\prime}_{K_{N_i}}\mathbf{B}_{K_{N_i}}\right]=\E\left(\w{\Psi}_{K_{N_i}}\right)
\end{equation*}
where, on the event $\{N_i>1\}$, and denoting by $
\mathcal{I}_i:= \{i_1, \ldots, i_{N_i}\}$ the indices $j$ such that $Y_j=i$, 
\begin{equation}
\label{eq:Bmatrix}
\mathbf{B}_{K_{N_i}}:=\begin{pmatrix}
        B_{-M}\left(X^{i_1}_{0}\right) & \dots & \dots & B_{K_{N_i}-1}\left(X^{i_1}_{0}\right) \\
        \vdots & & & \vdots\\
        B_{-M}\left(X^{i_1}_{(n-1)\Delta}\right) & \dots & \dots & B_{K_{N_i}-1}\left(X^{i_1}_{(n-1)\Delta}\right) \\
        \vdots & & & \vdots\\
        B_{-M}\left(X^{i_{N_i}}_{0}\right) & \dots & \dots & B_{K_{N_i}-1}\left(X^{i_{N_i}}_{0}\right) \\
        \vdots & & & \vdots\\
        B_{-M}\left(X^{i_{N_i}}_{(n-1)\Delta}\right) & \dots & \dots & B_{K_{N_i}-1}\left(X^{i_{N_i}}_{(n-1)\Delta}\right)
    \end{pmatrix}\in\mathbb{R}^{N_in\times (K_{N_i}+M)}.
\end{equation}
The empirical counterpart $\w{\Psi}$ is the random matrix given by 
$\w{\Psi}_{K_{N_i}}$ of size $(K_{N_i}+M) \times (K_{N_i}+M)$ is given by
\begin{equation}
\label{eq:psihatmatrix}
\w{\Psi}_{K_{N_i}}:=\frac{1}{N_in}\mathbf{B}^{\prime}_{K_{N_i}}\mathbf{B}_{K_{N_i}}=\left(\frac{1}{N_in}\sum_{j=1}^{N_i}{\sum_{k=0}^{n-1}{B_{\ell}(X^{i_j}_{k\Delta})B_{\ell^{\prime}}(X^{i_j}_{k\Delta})}}\right)_{\ell,\ell^{\prime}\in[-M,K_{N_i}-1]}.
\end{equation}
We build an orthonormal basis $\theta=(\theta_{-M},\cdots,\theta_{K_{N_i}-1})$ of the subspace $\mathcal{S}_{K_{N_i},M}$ with respect to the $\mathbb{L}^{2}$ inner product $\left<.,.\right>$ through the Gram-Schmidt orthogonalization of the spline basis $(B_{-M},\cdots,B_{K_{N_i}-1})$. Then, we have 
$$\mathrm{Span}(B_{-M},\cdots,B_{K_{N_i}-1})=\mathrm{Span}(\theta_{-M},\cdots,\theta_{K_{N_i}-1})=\mathcal{S}_{K_{N_i},M}$$ 
and the matrix given in Equation~\eqref{eq:Bmatrix}~ is factorized as follows
\begin{equation}
    \label{eq:Bmatrix-factorization}
    \mathbf{B}_{K_{N_i}}=\mathbf{\Theta}_{K_{N_i}}\mathbf{R}_{K_{N_i}}
\end{equation}
where
\begin{align*}
    \mathbf{\Theta}_{K_{N_i}}=\left(\left(\theta_{\ell}(X^{i_j}_{0}),\theta_{\ell}(X^{i_j}_{\Delta}),\cdots,\theta_{\ell}(X^{i_j}_{n\Delta})\right)^{\prime}\right)_{\underset{-M \leq \ell \leq K_{N_i}-1}{1 \leq j \leq N_i}} \in \R^{N_in \times (K_{N_i}+M)}
\end{align*}
and $\mathbf{R}_{K_{N_i}}$ is an upper triangular matrix of size $(K_{N_i}+M)\times(K_{N_i}+M)$ see~\cite{leon2013gram}). Let $\Phi_{K_{N_i}}$ be the Gram matrix under the orthonormal basis $\theta=\left(\theta_{-M},\cdots,\theta_{K_{N_i}-1}\right)$ and given by
\begin{equation*}
    \Phi_{K_{N_i}}=\E\left[\frac{1}{N_in}\mathbf{\Theta}^{\prime}_{K_{N_i}}\mathbf{\Theta_{K_{N_i}}}\right]=\E\left(\w{\Phi}_{K_{N_i}}\right)
\end{equation*}
where,
\begin{equation}
\label{eq:phihatmatrix}
\w{\Phi}_{K_{N_i}}:=\frac{1}{N_in}\mathbf{\Theta}^{\prime}_{K_{N_i}}\mathbf{\Theta}_{K_{N_i}}=\left(\frac{1}{N_in}\sum_{j=1}^{N_i}{\sum_{k=0}^{n-1}{\theta_{\ell}(X^{i_j}_{k\Delta})\theta_{\ell^{\prime}}(X^{i_j}_{k\Delta})}}\right)_{\ell,\ell^{\prime}\in[-M,K_{N_i}-1]}.
\end{equation}
The matrices $\Psi_{K_{N_i}}$ and $\w{\Psi}_{K_{N_i}}$ are respectively linked to the matrices $\Phi_{K_{N_i}}$ and $\w{\Phi}_{K_{N_i}}$ through the following relations
\begin{equation*}
    \Psi_{K_{N_i}}= {\bf R}^{\prime}_{K_{N_i}}\Phi_{K_{N_i}} {\bf R}_{K_{N_i}} \ \ \mathrm{and} \ \ \w{\Psi}_{K_{N_i}}= {\bf R}^{\prime}_{K_{N_i}}\w{\Phi}_{K_{N_i}} {\bf R}_{K_{N_i}}
\end{equation*}
Since for all $h=\sum_{\ell=-M}^{K_{N_i}-1} a_{\ell} B_{\ell,M,{\bf u}}\in S_{K_{N_i}, M}$ one has
$$\|h\|_{n,N_i}^2 = a^{\prime} \w{\Psi}_{K_{N_i}} a \ \ \mathrm{and} \ \ \|h\|_{n,i}^2 = a^{\prime} \Psi_{K_{N_i}} a, \ \ \mathrm{with} \ \ a=\left(a_{-M},\cdots,a_{K_{N_i}-1}\right)^{\prime},$$
we deduce that
$$\|h\|_{n,N_i}^2 = w^{\prime} \w{\Phi}_{K_{N_i}} w \ \ \mathrm{and} \ \ \|h\|_{n,i}^2 = w^{\prime} \Phi_{K_{N_i}} w, \ \ \mathrm{with} \ \ w={\bf R}_{K_{N_i}}a.$$
Under Assumption~\ref{ass:RegEll}, we follow the lines of ~\cite{comte2020regression} \textit{Proposition 2.3} and \textit{Lemma 6.2}. Then,
\begin{eqnarray*}
\sup _{h \in S_{K_{N_i},M},\|h\|_{n,i}=1}\left|\|h\|_{n,N_i}^2-\|h\|_{n,i}^2\right|&=&\sup _{w \in \R^{K_{N_i}+M},\left\|\Phi_{K_{N_i}}^{1 / 2} w\right\|_{2, K_{N_i}+M}=1}\left|w^{\prime}\left(\w{\Phi}_{K_{N_i}}-\Phi_{K_{N_i}}\right) w\right| \\
&=&\sup _{u \in \mathbb{R}^{K_{N_i}+M},\|u\|_{2, K_{N_i}+M}=1}\left|u^{\prime} \Phi_{K_{N_i}}^{-1 / 2}\left(\w{\Phi}_{K_{N_i}}-\Phi_{K_{N_i}}\right) \Phi_{K_{N_i}}^{-1 / 2} u\right| \\
&=&\left\|\Phi_{K_{N_i}}^{-1 / 2} \w{\Phi}_{K_{N_i}} \Phi_{K_{N_i}}^{-1 / 2}-\operatorname{Id}_{K_{N_i}+M}\right\|_{\mathrm{op}}.
\end{eqnarray*}
Therefore,  
$$
\Omega_{n, N_i, K_{N_i}}^c=\left\{\left\|\Phi_{K_{N_i}}^{-1 / 2} \w{\Phi}_{K_{N_i}} \Phi_{K_{N_i}}^{-1 / 2}-\operatorname{Id}_{K_{N_i}+M}\right\|_{\mathrm{op}} > 1 / 2\right\}.
$$
Then, we apply here Theorem 1 of \cite{cohen2013}, it yields  
\begin{equation}
\label{eq:OmegaComp-bound1}
    \P_i\left(\Omega^{c}_{n,N_i,K_{N_i}}\right)\leq 2(K_{N_i}+M) \exp\left(- c_{1/2} \frac{N_i}{\mathcal{L}(K_{N_i}+M)(\|\Phi^{ -1}_{K_{N_i}}\|_{\mathrm{op}} \vee 1)}\right)
\end{equation}
with $c_{1/2}=(3\log(3/2)-1)/2$ and
$\mathcal{L}(K_{N_i}+M):=\underset{x\in[-A_{N_i},A_{N_i}]}{\sup}{\sum_{\ell=-M}^{K_{N_i}-1}{\theta^{2}_{\ell}(x)}}$
(from application of Lemma 6.2 from \cite{comte2020regression}).
For all $h=\sum_{\ell=-M}^{K_{N_i}-1}{w_{\ell}\theta_{\ell}}\in\mathrm{Span}\left(\theta_{-M},\cdots,\theta_{K_{N_i}-1}\right)=\mathcal{S}_{K_{N_i},M}$, we have
\begin{align*}
    \|h\|^{2}=\|w\|^{2}_{2,K_{N_i}+M} \ \ \mathrm{and} \ \ \|h\|^{2}_{n,i}=1 \ \ \mathrm{implies} \ \ w=\Phi^{-1/2}_{K_{N_i}}u \ \ \mathrm{where} \ \ u\in\R^{K_{N_i}+M}: \|u\|_{2,K_{N_i}+M}=1.
\end{align*}
We deduce that
\begin{equation*}
    \underset{h\in\mathcal{S}_{K_{N_i}+M}, \ \|h\|^{2}_{n,i}=1}{\sup}{\|h\|^2}=\underset{u\in\R^{K_{N_i}+M}, \|u\|_{2,K_{N_i}+M}=1}{\sup}{\ u^{\prime}\Phi^{-1}_{K_{N_i}}u}=\left\|\Phi^{-1}_{K_{N_i}}\right\|_{\mathrm{op}}.
\end{equation*}
Furthermore, for all $h=\sum_{\ell=-M}^{K_{N_i}-1}{a_{\ell}B_{\ell}}\in\mathrm{Span}\left(B_{-M},\cdots,B_{K_{N_i}-1}\right)=\mathcal{S}_{K_{N_i},M}$, we have on one side
$$\|h\|^{2}_{n,i}=1 \ \ \mathrm{implies} \ \ a=\Psi^{-1/2}_{K_{N_i}}u \ \ \mathrm{where} \ \ u\in\R^{K_{N_i}+M}: \|u\|_{2,K_{N_i}+M}=1$$
and on the other side, for all $h\in\mathcal{S}_{K_{N_i}+M}$ such that $\|h\|^{2}_{n,i}=1$, from \cite{denis2020ridge} \textit{Lemma 2.6}, there exists a constant $C>0$ such that,
$$\|h\|^{2}\leq C A_{N_i}K^{-1}_{N_i} \|a\|^{2}_{2,K_{N_i}+M}=C A_{N_i}K^{-1}_{N_i} u^{\prime}\Psi^{-1}_{K_{N_i}}u.$$
Then we have $a.s$
\begin{equation}
\label{eq:Norms:phimatrix-psimatrix}
\left\|\Phi^{-1}_{K_{N_i}}\right\|_{\mathrm{op}}=\underset{h\in\mathcal{S}_{K_{N_i}+M}, \ \|h\|^{2}_{n,i}=1}{\sup}{\|h\|^2}\leq \frac{CA_{N_i}}{K_{N_i}} \underset{u\in\R^{K_{N_i}+M}, \|u\|_{2,K_{N_i}+M}=1}{\sup}{\ u^{\prime}\Psi^{-1}_{K_{N_i}}u} = \frac{CA_{N_i}}{K_{N_i}}  \left\|\Psi^{-1}_{K_{N_i}}\right\|_{\mathrm{op}}.
\end{equation}
\textcolor{black}{From Equations~\eqref{eq:OmegaComp-bound1}~and~\eqref{eq:Norms:phimatrix-psimatrix}, there exists a constant $C>0$ such that}
\begin{equation}
\label{eq:OmegaComp-bound2}
    \textcolor{black}{\P_i\left(\Omega^{c}_{n,N_i,K_{N_i}}\right)\leq 2(K_{N_i}+M)\exp\left(-C \frac{N_iK_{N_i}}{A_{N_i}\mathcal{L}(K_{N_i}+M)\left\|\Psi^{-1}_{K_{N_i}}\right\|_{\mathrm{op}}}\right).}
\end{equation}
\textcolor{black}{We have $\mathcal{L}(K_{N_i}+M):=\underset{x\in[-A_{N_i},A_{N_i}]}{\sup}{\sum_{\ell=-M}^{K_{N_i}-1}{\theta^{2}_{\ell}(x)}}$ and the functions $\theta_{\ell}, ~ \ell = -M,\ldots,K_{N_i}-1$ are given by}
\begin{align*}
    \textcolor{black}{\theta_{-M} =} &~ \textcolor{black}{\frac{f_{-M}}{\|f_{-M}\|} ~~ \mathrm{with} ~~ f_{-M} = B_{-M} } \\
    \textcolor{black}{\theta_{\ell} = } &~ \textcolor{black}{\frac{f_{\ell}}{\|f_{\ell}\|} ~~ \mathrm{with} ~~ f_{\ell} = B_{\ell} - \sum_{k=-M}^{\ell-1}{\left<B_{\ell},\theta_{k}\right>\theta_{k}}, ~~ \ell = -M+1,\ldots,K_{N_i}-1. }
\end{align*}
\textcolor{black}{Note that for all $x\in[-A_{N_i},A_{N_i}]$, there exists $\ell \in [\![-M,K_{N_i}-1]\!]$ such that $x \in [u_{\ell},u_{\ell+1})$. Then, $x \in [u_{\ell^{\prime}},u_{\ell^{\prime}+M+1})$ for all $\ell^{\prime} \in [\![\ell-M,\ell]\!]$ if $\ell \geq 0$ and $\ell^{\prime} \in [\![-M,\ell]\!]$ for $\ell \leq -1$. Thus, for each $x\in[-A_{N_i}, A_{N_i}]$, there exists at most $M+1$ spline functions that don't vanish at $x$. As a result, we have on one side,}
\begin{equation}
\label{eq:Reduction-SumOfTheta}
    \textcolor{black}{\forall~x\in[-A_{N_i},A_{N_i}],~~ \sum_{\ell = -M}^{K_{N_i}-1}{\theta^{2}_{\ell}(x)} = \sum_{j=1}^{M+1}{\theta^{2}_{\ell_{j}}(x)}}
\end{equation}
\textcolor{black}{where for all $ x\in[-A_{N_i},A_{N_i}]$, there exists integers $\ell_{j},~ j \in [\![1,M+1]\!]$ such that 
$$x \in \bigcap_{j=1}^{M+1}{[u_{\ell_j},u_{\ell_j+M+1})} ~~ \mathrm{and} ~~ x \notin [-A_{N_i},A_{N_i}] \setminus \bigcap_{j=1}^{M+1}{[u_{\ell_j},u_{\ell_j+M+1})}.$$ 
One the other side, for all $\ell \in [\![1,K_{N_i}-1]\!]$ and for all $x\in[u_{\ell},u_{\ell+M+1})$ there exists at most $M+1$ integers $\ell_1,\ldots,\ell_{M+1}$ such that}
\begin{equation*}
    \textcolor{black}{\theta_{\ell}(x) = \frac{f_{\ell}(x)}{\|f_{\ell}\|} ~~ \mathrm{and} ~~ f_{\ell}(x) = B_{\ell}(x) - \sum_{j=1}^{M+1}{\left<B_{\ell},\theta_{\ell_j}\right>\theta_{\ell_j}(x)}.}
\end{equation*}
\textcolor{black}{Now we focus on the supremum norm of each basis function $\theta_{\ell}, ~ \ell=-M, \ldots, K_{N_i}-1$. For all each $\ell \in [\![-M,K_{N_i}-1]\!]$, since the spline function $B_{\ell}$ is non-zero, positive and continuous on the interval $[u_{\ell},u_{\ell+M+1})$, there exists an interval $[\alpha_{\ell},\beta_{\ell}] \subset [u_{\ell},u_{\ell+M+1})$ such that $c_{\ell} = \underset{x\in[\alpha_{\ell},\beta_{\ell}]}{\inf}{B_{\ell}(x)} > 0$ where $(\alpha_{\ell} - \beta_{\ell}) \propto A_{N_i}/K_{N_i}$ since $\int_{\alpha_{\ell}}^{\beta_{\ell}}{B_{\ell}(x)dx} \propto A_{N_i}/K_{N_i}$. Then we have
}
\begin{equation}
\label{eq:L2Norm-Bl}
   \textcolor{black}{\forall~\ell\in[\![-M,K_{N_i}-1]\!], ~~ \left\|B_{\ell}\right\|^{2} = \int_{u_{\ell}}^{u_{\ell+M+1}}{B^{2}_{\ell}(x)dx} \geq c_{\ell}\int_{\alpha_{\ell}}^{\beta_{\ell}}{B_{\ell}(x)dx} = C_{\ell}\frac{A_{N_i}}{K_{N_i}}}
\end{equation}
\textcolor{black}{where the constant $C_{\ell}>0$ depends on $c_{\ell} = \underset{x\in[\alpha_{\ell},\beta_{\ell}]}{\inf}{B_{\ell}(x)} > 0$. Then, for $\ell = -M$, there exists a constant $C_{-M}$ such that $\theta^{2}_{-M}(x) \leq C_{-M}K_{N_i}$ and for each $\ell \geq -M+1$, since the function $f_{\ell}$ depends on splines functions $B_{-M},\ldots,B_{\ell}$ and only $B_{\ell}$ does not vanish on the interval $[u_{\ell+M},u_{\ell+M+1})$, we obtain that} 
\begin{equation*}
    \textcolor{black}{\left\|f_{\ell}\right\|^{2} = \int_{-A_{N_i}}^{A_{N_i}}{f^{2}_{\ell}(x)} \geq \int_{u_{\ell+M}}^{u_{\ell+m+1}}{B^{2}_{\ell}(x)dx}.}
\end{equation*}
\textcolor{black}{Moreover, since $B_{\ell}$ is non-zero, positive and continue on the interval $[u_{\ell+M},u_{\ell+M+1})$, there exists an interval $[\alpha_{\ell},\beta_{\ell}] \subset [u_{\ell+M},u_{\ell+M+1})$ with $(\alpha_{\ell} - \beta_{\ell}) \propto A_{N_i}/K_{N_i}$ such that $c_{\ell} = \underset{x\in[\alpha_{\ell},\beta_{\ell}]}{\inf}{B_{\ell}(x)} > 0$. Then we obtain}
\begin{equation}
\label{eq:L2Norm-fl}
     \textcolor{black}{\left\|f_{\ell}\right\|^{2} \geq c_{\ell}\int_{\alpha_{\ell}}^{\beta_{\ell}}{B_{\ell}(x)dx} = C\frac{A_{N_i}}{K_{N_i}}, ~~ \ell \in [\![-M+1,\ldots,K_{N_i}-1]\!]}
\end{equation}
\textcolor{black}{where $C >0$ is a constant depending on $\underset{\ell=-M+1,\ldots,K_{N_i}-1}{\min}{c_{\ell}} >0$. On the other side, for all $\ell \in [\![-M+1,K_{N_i}-1]\!]$ and for all $x \in [-A_{N_i},A_{N_i}]$,}
\begin{equation}
\label{eq:UpperBound-fl}
    \textcolor{black}{\left|f_{\ell}(x)\right| \leq \left|B_{\ell}(x)\right| + \sum_{j=-M}^{\ell-1}{\frac{\left<B_{\ell},f_j\right>}{\|f_j\|^{2}}|f_j(x)|} \leq 1 + C\sum_{j=-M}^{\ell-1}{|f_j(x)|}}
\end{equation}
\textcolor{black}{where the constant $C>0$ is the upper-bound of $\left<B_{\ell},f_j\right>/\|f_j\|^{2} \leq \|B_{\ell}\|/\|f_j\|$ according to Equations \eqref{eq:L2Norm-Bl}~and~\eqref{eq:L2Norm-fl}. For $\ell = -M$, we have $\|f_{-M}\|_{\infty} < \infty$. Let $\ell \in [\![-M+1,K_{N_i}-1]\!]$. Assume that the functions $f_{-M},\ldots,f_{\ell-1}$ are all bounded, then by recurrence hypothesis, we have from Equation~\eqref{eq:UpperBound-fl} that}
\begin{equation*}
    \textcolor{black}{\|f_{\ell}\|_{\infty} \leq 1 + C\sum_{j=-M}^{\ell-1}{\|f_{j}\|_{\infty}} < \infty.}
\end{equation*}
\textcolor{black}{Thus, we obtain by recurrence that the functions $f_{\ell},~ \ell=-M,\ldots,K_{N_i}-1$ are bounded and finally conclude from Equation~\eqref{eq:Reduction-SumOfTheta} that} 
\begin{equation*}
    \textcolor{black}{\mathcal{L}(K_{N_i}+M):=\underset{x\in[-A_{N_i},A_{N_i}]}{\sup}{\sum_{\ell=-M}^{K_{N_i}-1}{\theta^{2}_{\ell}(x)}} \leq CK_{N_i}}
\end{equation*}
\textcolor{black}{where the constant $C>0$ depends on the spline basis. We deduce from Equation~\eqref{eq:OmegaComp-bound2} that there exists a constant $C>0$ such that}  
\begin{equation}
    \label{eq:OmegaComp-bound2-bis}
    \textcolor{black}{\P_i\left(\Omega^{c}_{n,N_i,K_{N_i}}\right)\leq 2(K_{N_i}+M)\exp\left(-C \frac{N_i}{A_{N_i}\left\|\Psi^{-1}_{K_{N_i}}\right\|_{\mathrm{op}}}\right).}
\end{equation}
Furthermore, since $A_{N_i}\leq \sqrt{\frac{3\beta}{2\beta+1}\log(N_i)}, ~ \textcolor{black}{K_{N_i} \propto \log^{-5/2}(N_i)N^{1/(2\beta+1)}_{i}}$ \textcolor{black}{and from Lemma~\ref{lm:MinEigenValue}}, we obtain from \textcolor{black}{Equation~\eqref{eq:OmegaComp-bound2-bis}},
\begin{equation}
\label{eq:OmegaComp-bound3}
    \P_i\left(\Omega^{c}_{n,N_i,K_{N_i}}\right)\leq 2(K_{N_i}+M)\exp\left(-C \log^{3/2}(N_i)\right)
\end{equation}
where $C>0$ is a new constant depending on $C_{\theta}, \beta$ and $M$. Since $N_i\longrightarrow\infty \ a.s.$ as $N\longrightarrow\infty$, one has
\begin{equation*}
    \exp\left(\log(N_i)-C\log^{3/2}(N_i)\right)\longrightarrow 0 \ a.s. \ \ \mathrm{as} \ \ N\longrightarrow\infty.
\end{equation*}
Then, for $N$ large enough, $\exp\left(\log(N_i)-C\log^{3/2}(N_i)\right)\leq 1 \ \ a.s.$ and from Equation~\eqref{eq:OmegaComp-bound3},
\begin{equation*}
\P_i\left(\Omega^{c}_{n,N_i,K_{N_i}}\right)\leq\frac{2(K_{N_i}+M)}{N_{i}}\leq c\frac{K_{N_i}}{N_i}
\end{equation*}
where the constant $c>0$ depends on $M$.
\end{proof}

\end{document}